\newcommand{\thetitle}{Spatially-adaptive sensing in nonparametric
  regression}
\newcommand{\forenames}{Adam D.}
\newcommand{\surname}{Bull}
\newcommand{\fullname}{\forenames\ \surname}
\newcommand{\mscone}{62G08} 
\newcommand{\msctwo}{62L05} 
\newcommand{\mscthree}{62G20} 
\newcommand{\themsclass}{\mscone\ (primary); \msctwo, \mscthree\ (secondary).}
\newcommand{\kwdone}{Nonparametric regression}
\newcommand{\kwdtwo}{adaptive sensing}
\newcommand{\kwdthree}{sequential design}
\newcommand{\kwdfour}{active learning}
\newcommand{\kwdfive}{spatial adaptation}
\newcommand{\kwdsix}{spatially-inhomogeneous functions}
\newcommand{\thekeywords}{\kwdone, \kwdtwo, \kwdthree, \kwdfour,
  \kwdfive, \kwdsix.}
\newcommand{\addressone}{Statistical Laboratory}
\newcommand{\addresstwo}{University of Cambridge}
\newcommand{\theemail}{a.bull@statslab.cam.ac.uk}
\newcommand{\comment}[1]{}
\let\oldmarginpar\marginpar
\renewcommand\marginpar[1]{\-\oldmarginpar[\raggedleft\footnotesize 
#1]{\raggedright\footnotesize #1}}
\newcommand{\acks}[1]{\subsection*{Acknowledgements}#1}
\DeclarePairedDelimiter{\abs}{\lvert}{\rvert}
\DeclarePairedDelimiter{\norm}{\lVert}{\rVert}
\newcommand{\N}{\mathbb{N}}
\newcommand{\Z}{\mathbb{Z}}
\newcommand{\R}{\mathbb{R}}
\renewcommand{\P}{\mathbb{P}}
\newcommand{\E}{\mathbb{E}}
\newcommand{\iid}{\overset{\mathrm{i.i.d.}}{\sim}}
\declaretheorem{theorem}
\declaretheorem[sibling=theorem]{lemma}
\declaretheorem[sibling=theorem]{definition}
\declaretheorem[sibling=theorem]{proposition}
\newcommand{\imin}{{i_n^{\min}}}
\newcommand{\imax}{{i_n^{\max}}}
\newcommand{\jmin}{{j_n^{\min}}}
\newcommand{\jmax}{{j_n^{\max}}}
\newcommand{\jmaxm}{{j_{n_m}^{\max}}}
\newcommand{\jmaxmm}{{j_{n_{m-1}}^{\max}}}
\DeclareMathOperator{\Bin}{Bin}
\DeclareMathOperator{\median}{median}
\newcommand{\applink}{Finally, we provide proofs in \autoref{sec:proofs}.}
\begin{document}

\title{\thetitle
\footnotetext{\emph{Mathematics subject classification 2010.} \themsclass}
\footnotetext{\emph{Keywords.} \thekeywords}}
\author{\fullname\\\footnotesize \addressone \\\footnotesize \addresstwo \\
\footnotesize \theemail}
\date{}
\maketitle

\begin{abstract}
  While adaptive sensing has provided improved rates of convergence in
sparse regression and classification, results in nonparametric
regression have so far been restricted to quite specific classes of
functions. In this paper, we describe an adaptive-sensing algorithm
which is applicable to general nonparametric-regression problems. The
algorithm is spatially-adaptive, and achieves improved rates of
convergence over spatially-inhomogeneous functions. Over standard
function classes, it likewise retains the spatial adaptivity
properties of a uniform design.


\end{abstract}

\section{Introduction}

In many statistical problems, such as classification and regression,
we observe data \(Y_1, Y_2, \dots,\) where the distribution of each
\(Y_n\) depends on a choice of design point \(x_n.\) Typically, we
assume the \(x_n\) are fixed in advance. In practice, however, it is
often possible to choose the design points sequentially, letting each
\(x_n\) be a function of the previous observations \(Y_1, \dots,
Y_{n-1}.\)

We will describe such procedures as {\em adaptive sensing}, but they
are also known by many other names, including sequential design,
adaptive sampling, active learning, and combinations thereof. The
field of adaptive sensing has seen much recent interest in the
literature: compared with a fixed design, adaptive sensing algorithms
have been shown to provide improvements in sparse regression
\citep{iwen_group_2009,haupt_distilled_2011,malloy_sequential_2011,boufounos_whats_2012,davenport_compressive_2012}
and classification
\citep{cohn_improving_1994,castro_minimax_2008,beygelzimer_importance_2009,koltchinskii_rademacher_2010,hanneke_rates_2011}.
Recent results have also focused on the limits of adaptive sensing
\citep{arias-castro_fundamental_2011,malloy_limits_2011,castro_adaptive_2012}.

In this paper, we will consider the problem of nonparametric
regression, where we aim to estimate an unknown function \(f : [0, 1]
\to \R\) from observations
\[Y_n \coloneqq f(x_n) + \varepsilon_n, \quad \varepsilon_n \iid N(0,
\sigma^2).\]  While previous authors have also considered this model
under adaptive sensing, their results have either been restricted to
quite specific classes of functions \(f,\) or have not provided
improved rates of convergence
\citep{faraway_sequential_1990,cohn_active_1996,hall_sequential_2003,castro_faster_2006,efromovich_optimal_2008}.

In the following, we will describe a new algorithm for adaptive
sensing in nonparametric regression. Our algorithm will be based on
standard wavelet techniques, but with an adaptive choice of design
points: we will aim to codify, in a meaningful way, the intuition that
we should place more design points in regions where \(f\) is hard to
estimate.

While many such heuristics are possible, we would like to construct an
algorithm with good theoretical justification; in particular, we will
be interested in attaining improved rates of convergence. In general,
however, it is known that in nonparametric regression, adaptive
sensing cannot provide improved rates over standard classes of
functions. \citet{castro_faster_2006} prove such a result for \(L^2\)
loss; we will show the same is true locally uniformly.

In the following, we will argue that the fault here lies not with
adaptive sensing, but rather with the functions considered. In the
field of {\em spatial adaptation}, unknown functions are often assumed
to be {\em spatially inhomogeneous}: they may be rougher, and thus
harder to estimate, in some regions of space than in others. The
seminal paper of \citet{donoho_ideal_1994} provides examples, which we
have reproduced in \autoref{fig:functions}; these mimic the kinds
functions observed in imaging, spectroscopy and other signal
processing problems.

\begin{figure}
\centering
\includegraphics[page=1,width=\textwidth]{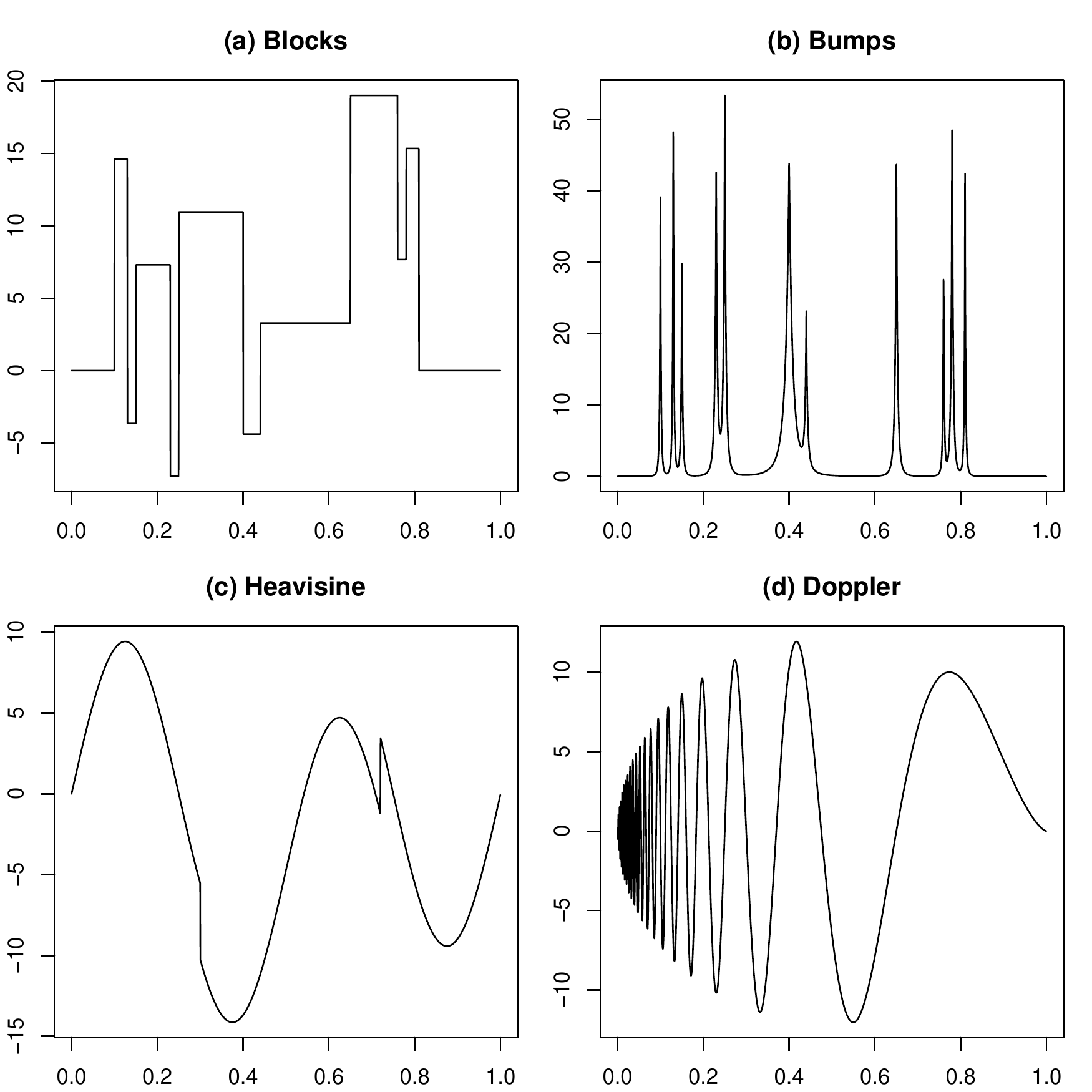}
\caption{Examples of spatially-inhomogeneous functions from
  \citet{donoho_ideal_1994}. Each function is scaled to have
  \(\mathrm{sd}(f) = 7.\)}
\label{fig:functions}
\end{figure}

Previous work in this field has provided many fixed-design estimators
with good performance over such functions
\citep{donoho_wavelet_1995,fan_data-driven_1995,lepski_optimal_1997,donoho_minimax_1998,fan_adaptation_1999}. With
adaptive sensing, however, we can obtain further improvements: if we
place more design points in regions where \(f\) is rough, our
estimates \(\hat f_n\) will become more accurate overall.

To quantify this, we will need to introduce new classes describing
spatially-inhomogeneous functions, over which our algorithm will be
shown to obtain improved rates of convergence. While these classes are
novel, they will be shown to contain quasi-all functions from standard
classes in the literature. Furthermore, our algorithm will be shown to
adaptively obtain near-optimal rates over both the new and standard
function classes.

Smoothness classes similar to our own have arisen in the study of
adaptive nonparametric inference
\citep{picard_adaptive_2000,gine_confidence_2010,bull_honest_2012},
and more generally also in the study of turbulence
\citep{frisch_singularity_1980,jaffard_frisch-parisi_2000}. As in
those papers, we find that for complex nonparametric problems, the
standard smoothness classes may be insufficient to describe behaviour
of interest; by specifying our target functions more carefully, we can
achieve more powerful results.

We might also compare this phenomenon to results in sparse regression,
where good rates are often dependent on specific assumptions about the
design matrix or unknown parameters
\citep{fan_sure_2008,van_de_geer_conditions_2009,meinshausen_stability_2010}. As
there, we can use the nature of our assumptions to provide insight
into the kinds of problems on which we can expect to perform well.

We will test our algorithm by estimating the functions in
\autoref{fig:functions} under Gaussian noise. We will see that, by
sensing adaptively, we can make significant improvements to accuracy;
we thus conclude that adaptive sensing can be of value in
nonparametric regression whenever the unknown function may be
spatially inhomogeneous.

In \autoref{sec:algorithm}, we describe our adaptive-sensing
algorithm.  In \autoref{sec:theory}, we describe our model of spatial
inhomogeneity, and show that adaptive sensing can lead to improved
performance over such functions. In \autoref{sec:practise}, we discuss
the implementation of our algorithm, and provide empirical
results. \applink

\section{The adaptive-sensing algorithm}
\label{sec:algorithm}

We now describe our adaptive-sensing algorithm in detail. We first
discuss how we estimate \(f\) under varying designs; we then move on
to the choice of design itself.

\subsection{Estimation under varying designs}
\label{sec:estimation}

Given observations \(Y_1, \dots, Y_n\) at a set of design points
\(\xi_n \coloneqq \{x_1, \dots, x_n\},\) we will estimate the function
\(f\) using the technique of wavelet thresholding, which is known to
give spatially-adaptive estimates \citep{donoho_ideal_1994}. To begin,
we will need to choose our wavelet basis; for \(j_0 \in \N,\) let
\[\varphi_{j,k} \text{ and } \psi_{j, 
  k},\quad j, k \in \Z,\ j \ge j_0,\ 0 \le k < 2^j,\]
be a compactly-supported wavelet basis of \(L^2([0, 1]),\) such as the
construction of \citet{cohen_wavelets_1993}.

In the following, we will assume the wavelets \(\psi_{j, k}\) have \(N
\in \N\) vanishing moments,
\[\int x^{n} \psi_{j, k}(x)\ dx = 0, \quad n \in \Z,\ 0 \le n < N,\]
and both \(\varphi_{j,k}\) and \(\psi_{j,k}\) are zero outside
intervals \(S_{j,k}\) of width \(2^{-j}(2L-1),\)
\[S_{j,k} \coloneqq 2^{-j}[k - L + 1, k + L) \cap [0, 1).\] For any
\(i \in \N,\ i \ge j_0,\) we may then write an unknown function \(f
\in L^2([0, 1])\) in terms of its wavelet expansion,
\[f = \sum_{k=0}^{2^{i}-1} \alpha_{i,k} \varphi_{i,k} +
\sum_{j=i}^\infty \sum_{k=0}^{2^j-1} \beta_{j, k} \psi_{j, k},\] and
estimate \(f\) in terms of the coefficients \(\alpha_{j_0, k},
\beta_{j,k}.\)

When the design is uniform, we can estimate these coefficients
efficiently in the standard way, using the fast wavelet transform
\citep{donoho_ideal_1994}. Suppose, as will always be the case in the
following, that the design points \(x_n\) are distinct, so we may
denote the observations \(Y_n\) as \(Y(x_n).\) Given \(i \in \N,\ i
\ge j_0,\) suppose also that we have observed \(f\) on a grid of
design points \(2^{-i}k,\ k \in \Z,\ 0 \le k < 2^i.\)

We may then estimate the scaling coefficients \(\alpha_{i,k}\) of \(f\) as
\[\hat \alpha^i_{i,k} \coloneqq 2^{-\frac{i}2}Y(2^{-i}k),\]
since for \(i\) large,
\begin{equation}
  \label{eq:scale-approx}
  \alpha_{i,k} = \int_{S_{i,k}} f(x) \varphi_{i,k}(x)\ dx \approx
  2^{-\frac{i}2}f(2^{-i}k).
\end{equation}
By an orthogonal change of basis, we can produce estimates \(\hat
\alpha_{j_0,k}^i\) and \(\hat \beta_{j,k}^i\) of the coefficients
\(\alpha_{j_0,k}\) and \(\beta_{j,k},\) given by the relationship
\begin{equation}
\label{eq:beta-hats-defn}
\sum_{k=0}^{2^{j_0}-1} \hat
\alpha^i_{j_0,k} \varphi_{j_0,k} + \sum_{j=j_0}^{i-1} \sum_{k=0}^{2^j-1}\hat
\beta^i_{j,k} \psi_{j,k} \coloneqq \sum_{k=0}^{2^i-1} \hat \alpha^i_{i,k} \varphi_{i,k}.
\end{equation}
These estimates can be computed efficiently by applying the fast
wavelet transform to the vector of values
\(2^{-\frac{i}2}Y(2^{-i}k).\)

Since we will be considering non-uniform designs, this situation will
often not apply directly. Many approaches to applying wavelets to
non-uniform designs have been considered in the literature, including
transformations of the data, and design-adapted wavelets
\citep[see][and references therein]{kerkyacharian_regression_2004}. In
the following, however, we will use a simple method, which allows us
to simultaneously control the accuracy of our estimates for many
different choices of design.

To proceed, we note that the value of an estimated coefficient \(\hat
\alpha^i_{j,k}\) or \(\hat \beta^i_{j,k}\) depends only on
observations \(Y(x)\) at points \(x \in S_{j,k} \cap 2^{-i}\Z.\) We
may therefore estimate the wavelet coefficients \(\alpha_{j_0,k}\) and
\(\beta_{j,k}\) by
\begin{equation}
\label{eq:alpha-beta-hat-defn}
\hat \alpha_{j_0,k} \coloneqq \hat \alpha_{j_0,k}^{i_n(j_0,k)}
\qquad \text{and} \qquad \hat \beta_{j,k} \coloneqq \hat
\beta_{j,k}^{i_n(j,k)},
\end{equation}
where the indices \(i_n(j,k)\) are chosen so that these estimates use
as many observations as possible,
\begin{equation}
\label{eq:i-defn}
i_n(j, k) \coloneqq \max\left\{i \in \N : i > j,\ 
  S_{j,k} \cap 2^{-i}\Z \subseteq \xi_n \right\}.
\end{equation}
To ease notation, for now we will estimate coefficients only up to a
maximum resolution level \(\jmax \in \N,\) \(\jmax > j_0,\) chosen so
that \(2^\jmax \sim n/\log(n).\) We will then be able to guarantee
that the set in \eqref{eq:i-defn} is non-empty.

Using these estimates directly will lead to a consistent estimate of
\(f,\) but one converging very slowly; to obtain a spatially-adaptive
estimate, we must use thresholding. We fix \(\kappa > 1,\) and for
\begin{equation}
\label{eq:e-defn}
e_n(j, k) \coloneqq \sigma 2^{-\frac12 i_n(j, k)}\sqrt{2 \log(n)},
\end{equation}
define the hard-threshold estimates
\[\hat{\beta}_{j,k}^T \coloneqq \begin{cases} \hat{\beta}_{j,k}, & 
  \abs{\hat{\beta}_{j,k}} \ge \kappa e_n(j, k),\\
  0, & \mathrm{otherwise}.\end{cases}\]%
We then estimate \(f\) by
\begin{equation}
\label{eq:f-hat-defn}
\hat{f}_n \coloneqq \sum_{k=0}^{2^{j_0}-1} \hat \alpha_{j_0,k}
\varphi_{j_0,k} + \sum_{j=j_0}^{\jmax-1} \sum_{k=0}^{2^j-1}
\hat{\beta}_{j,k}^T \psi_{j,k}.
\end{equation}
Given a uniform design \(\xi_n = 2^{-i}\Z \cap [0, 1),\) this is a
standard hard-threshold estimate; otherwise it gives a generalisation
to non-uniform designs.

\subsection{Adaptive design choices}

So far, we have only discussed how to estimate \(f\) from a fixed
design.  However, we can also use these estimates to choose the design
points adaptively. We will choose the design points in stages, at
stage \(m\) selecting points \(x_{n_{m-1}+1}\) to \(x_{n_m}\) in terms
of previous observations \(Y_1, \dots, Y_{n_{m-1}}.\) The number of
design points in each stage can be chosen freely, subject only to the
conditions that \(n_0\) is a power of two, and the ratios
\(n_m/n_{m-1}\) are bounded away from 1 and \(\infty.\) We may, for
example, choose
\begin{equation}
\label{eq:n-defn}
n_m \coloneqq \lfloor 2^{j+\tau m} \rfloor,
\end{equation}
for some \(j \in \N,\) and \(\tau > 0.\)

In the initial stage, we will choose \(n_0\) design points spaced
uniformly on \([0, 1],\)
\[x_i \coloneqq (i-1)/n_0,\quad 1 \le i \le n_0.\]%
At further stages \(m \in \N,\) we will construct a {\em target
  density} \(p_m\) on \([0, 1],\) and then select design points
\(x_{n_{m-1}+1}, \dots, x_{n_m}\) so that the design \(\xi_{n_m}\)
approaches a draw from this density. We will choose \(p_m\) to be
concentrated in regions of \([0,1]\) where we believe the function
\(f\) is difficult to estimate, ensuring that later design points will
adapt to the unknown shape of \(f.\)

At time \(n_{m-1},\) for each \(j \in \N,\ j_0 \le j < \jmaxmm,\) we
rank the \(2^j\) thresholded estimates \(\hat{\beta}^T_{j, k}\) in
decreasing order of size. We then have
\[\abs*{\hat{\beta}^T_{j, r_j^{-1}(1)}} \ge \dots \ge \abs*{\hat{\beta}^T_{j, 
    r_j^{-1}(2^j)}},\]%
for a bijective ranking function \(r_j.\) We will choose the target
density so that, in the support of each significant term
\(\beta_{j,k}\psi_{j, k}\) in the wavelet series, the density will be,
up to log factors, at least \(2^j/r_j(k).\) The density will thus be
concentrated in regions where the wavelet coefficients are known to be
large. To ensure that all coefficients are estimated accurately, we
will also require the density to be bounded below by a fixed constant,
given by a choice of parameter \(\lambda > 0.\)

Split the interval \([0, 1]\) into sub-intervals \[I_{l,m} \coloneqq
2^{-\jmaxm}[l, l+1),\quad l \in \Z,\ 0 \le l < 2^\jmaxm.\] We define
the target density on \(I_{l,m}\) to be
\begin{multline*}
p_{l,m} \coloneqq A \max\bigg(\left\{\lambda\right\} \cup\\
\left\{\frac{2^j}{r_j(k)(\jmaxmm)^2} :
 j \in \N,\ j_0 \le j < \jmaxmm,\ I_{l,m} \subseteq S_{j,k},\ \hat{\beta}^T_{j,k} \ne
    0\right\}\bigg),
\end{multline*}
where the fixed constant \(A > 0\) is chosen so that the density
\(p_m\) always integrates to at most one,
\(2^{-\jmaxm} \sum_l p_{l,m} \le 1.\)
The specific value of \(A\) is unimportant, but note that
\[
  2^{-\jmaxm} \sum_{l=0}^{2^\jmaxm-1} p_{l,m} \lesssim
  1 + (\jmaxmm)^{-2}\sum_{j=0}^\jmaxmm \sum_{k=1}^{2^j} k^{-1}
  \lesssim 1,\]
so such a choice of \(A\) exists.

We now aim to choose new design points \(x_{n_{m-1}+1},\dots,x_{n_m}\)
so that the design \(\xi_n\) approximates a draw from \(p_m.\) To
simplify notation, we first include any points \(x \in 2^{-\jmaxm}\Z
\cap [0, 1)\) not already in the design. We will assume the \(n_m\)
and \(\jmax\) are chosen so that this requires no more than \(n_m -
n_{m-1}\) design points; since \(\jmax\) is defined only
asymptotically, and \(2^{\jmaxm} = o(n_m-n_{m-1}),\) such a choice is
always possible.

We then construct an {\em effective density} \(q_{m,n},\) describing a
nominal density generating the design \(\xi_n.\) This density will be
at least \(2^i/n\) on any region where the design contains the grid
\(2^{-i}\Z;\) it will thus describe the density of all design points
on regular grids. We define the effective density on \(I_{l,m}\) at
time \(n\) to be
\[q_{l,m,n} \coloneqq n^{-1} \max\left\{2^i : i \in \N, \ 2^{-i}\Z
  \cap I_{l,m} \subseteq \xi_n \right\}.\]%
Again, note this density integrates to at most one,
\(2^{-\jmaxm}\sum_l q_{l,m,n} \le 1.\)

Our remaining goal is to choose the new design points so that the
effective density approaches the target density. In our proofs, we
will require control over the maximum discrepancy from \(p_m\) to
\(q_{m,n},\)
\begin{equation}
\label{eq:max-disc}
\max_{l=0}^{2^\jmaxm-1} p_{l,m}/q_{l,m,n}.
\end{equation}
To choose the next stage of design points, having selected points
\(x_1, \dots, x_n,\) we therefore pick an \(l\) maximising
\eqref{eq:max-disc}; note that this does not require us to calculate
\(A.\) We then add points \(2^{-i}\Z \cap I_{l,m}\) to the design,
choosing the smallest index \(i\) for which at least one such point is
not already present.

In doing so, we halve the largest value of \(p_{l,m}/n q_{l,m,n},\)
while leaving all other such values unchanged. Repeating this process,
we will therefore add design points on grids \(2^{-i}\Z \cap I_{l,m}\)
so as to minimise \eqref{eq:max-disc}. We continue until we have
selected a total of \(n_m\) design points; for convenience, let
\(q_{l,m} \coloneqq q_{l,m,n_m}\) denote the effective density on
\(I_{l,m}\) once we are done.

The final algorithm is thus described by \autoref{alg:sas}; it can be
implemented efficiently using a priority queue to find values of \(l\)
maximising \eqref{eq:max-disc}. We will show that this algorithm
ensures the final effective density \(q_m\) is close to the target
density \(p_m,\) and that estimates made under it are therefore
spatially-adaptive for a wide variety of functions.

\begin{algorithm}
  \caption{Spatially-adaptive sensing}
  \label{alg:sas}
\begin{algorithmic}
  \STATE \(n \gets n_0\)
  \STATE \(x_1, \dots, x_n \gets n^{-1}\Z \cap [0, 1)\)
  \STATE observe \(Y_1, \dots, Y_n\)
  \STATE \(m \gets 1\)
  \LOOP
  \STATE \(x_{n+1}, \dots, x_{n'} \gets 2^{-\jmaxm}\Z \cap [0, 1)
  \setminus \xi_n\)
  \STATE \(n \gets n'\)
  \WHILE{\(n < n_m\)}
  \STATE choose \(l\) maximising \(p_{l,m}/q_{l,m,n}\)
  \STATE \(S \gets 2^{-i}\Z \cap I_{l,m} \setminus \xi_n,\) for the
  smallest \(i\) such that \(S \ne \emptyset\)
  \REPEAT
  \STATE \(n \gets n + 1\)
  \STATE choose \(x_n \in S\)
  \STATE \(S \gets S \setminus \{x_n\}\)
  \UNTIL \(S = \emptyset\) or \(n = n_m\)
  \ENDWHILE
  \STATE observe \(Y_{n_{m-1}+1}, \dots, Y_n\)
  \STATE estimate \(f\) by \(\hat f_n\)
  \STATE \(m \gets m + 1\)
  \ENDLOOP
\end{algorithmic}
\end{algorithm}

\section{Theoretical results}
\label{sec:theory}

We now provide theoretical results on the performance of our
algorithm. We begin by defining the relevant function classes, then
discuss our choice of functions considered; we conclude with our
results on convergence rates.

\subsection{Function classes}
\label{sec:classes}

We first define the function classes we will consider in the
following. We will assume we have a wavelet basis
\(\psi_{j_0,k},\varphi_{j,k}\) satisfying the assumptions of
\autoref{sec:estimation}; we can then describe any function \(f \in
L^2([0, 1])\) by its wavelet series,
\[f = \sum_{k=0}^{2^{j_0}-1} \alpha_{j_0,k} \varphi_{j_0,k} +
\sum_{j=j_0}^\infty \sum_{k=0}^{2^j-1} \beta_{j, k} \psi_{j, k}.\]

The smoothness of \(f,\) and thus the ease with which it can be
estimated, is determined by the size of the coefficients
\(\alpha_{j_0,k},\beta_{j,k};\) \(f\) is smooth, and easily estimated,
when these coefficients are small. The smoothness of a function can be
described in terms of its membership of standard function classes.
While there are many such classes, in what follows we will be
interested primarily in the H\"older and Besov scales
\citep{hardle_wavelets_1998}.

For \(s \in \N,\) the H\"older classes \(C^s(M)\) contain all
functions which are \(s\)-times differentiable, with value and
derivatives are bounded by \(M;\) the local H\"older classes \(C^s(M,
I)\) instead require this condition to hold only over an interval
\(I.\) These definitions can also be extended to non-integer \(s,\)
and given in terms of wavelet coefficients. We note that while the
wavelet definitions are in general slightly weaker than the classical
ones, this will not fundamentally affect our results in what follows.

\begin{definition}
  For \(s \in (0, N),\) \(M > 0,\) and \(I \subseteq [0, 1],\)
  \(C^s(M, I)\) is the class of functions \(f \in L^2([0, 1])\)
  satisfying
  \[\max\left(2^{j_0\left(s+\frac12\right)}\sup_{k:S_{j_0,k} \subseteq I}\abs{\alpha_{j_0,k}},\
    \sup_{j=j_0}^\infty 2^{j\left(s+\frac12\right)} \sup_{k:S_{j,k}
      \subseteq I} \abs{\beta_{j,k}}\right) \le M.\] For \(I =
  [0,1],\) we denote this class \(C^s(M).\)
\end{definition}

The Besov classes \(B^r_{p,\infty}(M)\) are more general. For \(p =
\infty,\) they coincide with our definition of the H\"older classes
\(C^r(M).\) For \(p < \infty,\) they allow functions with some
singularities, provided they are still, on average, \(r\)-times
differentiable; smaller values of \(p\) correspond to more irregular
functions.

\begin{definition}
  For \(r \in (0, N),\) \(p \in [1, \infty),\) and \(M > 0,\)
  \(B^r_{p, \infty}(M)\) is the class of functions \(f \in L^2([0,
  1])\) satisfying
  \[\max\left(2^{j_0\left(r+\frac12-\frac1p\right)} \left(\sum_k\abs{\alpha_{j_0,k}}^p\right)^{\frac1p},\ 
    \sup_{j=j_0}^\infty 2^{j\left(r+\frac12-\frac1p\right)} \left(
      \sum_k \abs{\beta_{j,k}}^p \right)^{\frac1p}\right) \le M.\] For
  \(p = \infty,\) we define \(B^r_{\infty, \infty}(M) \coloneqq C^r(M).\)
\end{definition}

Many other standard classes are related to these Besov classes,
including the Sobolev classes \(W^{r,p}(M) \subseteq B^r_{p,\infty}(M),\)
the Sobolev Hilbert classes \(H^r(M) \subseteq B^r_{2,\infty}(M),\) and
the functions of bounded variation \(BV(M) \subseteq
B^1_{1,\infty}(M).\) In each case, convergence rates are unchanged by
considering the containing Besov class, meaning we need consider only
Besov classes in what follows.

Besov classes can also be thought of as describing functions whose
wavelet expansions are {\em sparse}. From the above definitions, we
can see that, compared to a H\"older class, functions in a Besov class
can have a number of larger wavelet coefficients, provided there are
not too many. In other words, functions in a Besov class can have
wavelet expansions where most, but not all, coefficients are small.

Besov classes are often used to describe spatially-inhomogeneous
functions; we can see why by considering \autoref{fig:wavelets}, which
plots the wavelet coefficients of the functions in
\autoref{fig:functions}. As above, the coefficients are often, but not
always, small.

\begin{figure}
  \centering
\includegraphics[page=2,width=\textwidth]{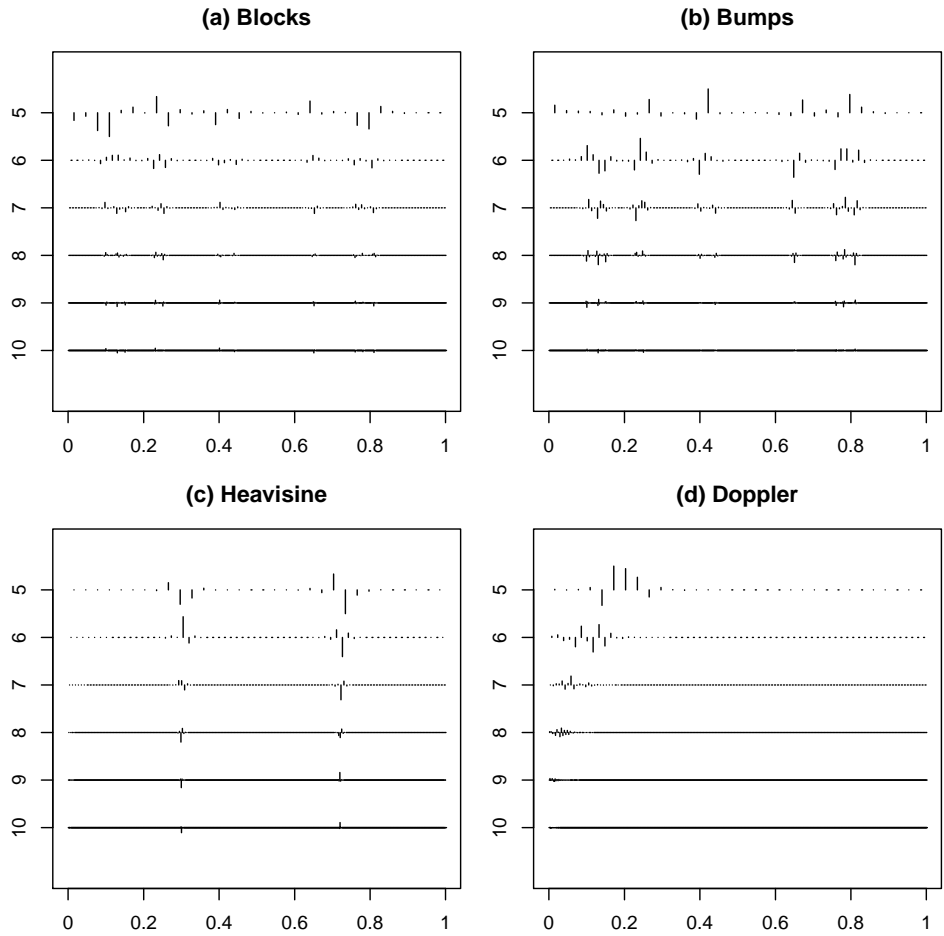}
\caption{CDV-8 wavelet coefficients for the functions in
  \autoref{fig:functions}. The height of each line corresponds to a
  wavelet coefficient \(\beta_{j,k};\) the \(x\)-axis plots the
  location \(2^{-j}k,\) and the \(y\)-axis the scale \(j.\)}
\label{fig:wavelets}
\end{figure}

Our final function class is a new definition, which we will argue
captures another typical feature of spatially-inhomogeneous functions,
and is necessary to obtain improved rates of convergence. From
\autoref{fig:wavelets}, we can see that, in regions where the
functions \(f\) are rough, their wavelet coefficients are often large;
in regions where they are smooth, their coefficients are small.  In
other words, if \(f\) is difficult to approximate in some region at
high resolution, it will also be difficult to approximate there at
lower resolutions.

We will call such functions {\em detectable}, and describe them in
terms of detectable classes \(D^s_t(M, I) \subseteq C^s(M, I).\) The
additional parameter \(t \in (0, 1)\) controls the strength of our
condition; larger \(t\) corresponds to a stronger condition on
functions \(f.\)

\begin{definition}
  For \(s \in (0, N),\) \(t \in (0, 1),\) \(M > 0,\) and an interval
  \(I \subseteq [0, 1],\) \(D^s_{t}(M, I)\) is the class of functions
  \(f \in C^s(M, I)\) which also satisfy
  \begin{multline}
    \label{eq:parent-cond}
    \forall\ j \in \N,\ j \ge \lceil \tfrac{j_0}t \rceil,\ k : S_{j,k} \cap I \ne \emptyset,\\
    \exists\ j' \in \N,\ \lfloor tj \rfloor \le j' < j,\ k' : S_{j',k'}
    \supset S_{j,k},\\
    \abs{\beta_{j',k'}} \ge
    (j'/j)2^{(j-j')\left(s+\frac12\right)}\abs{\beta_{j, k}}.
  \end{multline}
\end{definition}

The definition thus requires that each term in the wavelet series on
\(I,\) at a fine scale \(j,\) lies within the support of another term,
of comparable size, at a coarser scale \(j'.\) The parameter \(s\)
controls how large this second term must be, and \(t\) controls how
far apart the scales \(j\) and \(j'\) can be.

In \autoref{sec:choice}, we will discuss why such conditions may be
natural to consider for this problem. First, however, we will
establish that a typical locally H\"older function will be detectable;
indeed, similarly to results in \citet{jaffard_frisch-parisi_2000} and
\citet{gine_confidence_2010}, we can show that the set of functions
which are locally H\"older but not detectable is topologically
negligible. We may therefore sensibly restrict to detectable functions
in what follows.

\begin{proposition}
  \label{prop:negligible-set}
  For \(s \in (0, N),\) \(M > 0,\) and any
  interval \(I \subseteq [0, 1],\) define
  \[\mathcal D \coloneqq C^s(M, I) \setminus \bigcup_{t \in (0, 
    1)} D^s_{t}(M, I).\] Then \(\mathcal D\) is nowhere dense in the
  norm topology of \(C^s(M, I).\)
\end{proposition}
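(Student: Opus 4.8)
The plan is to show that the complement of $\mathcal D$ is dense and open in $C^s(M, I)$; since $\mathcal D$ is closed this gives nowhere density. Actually, it is cleaner to argue directly: I will show that every $f \in C^s(M, I)$ can be approximated arbitrarily well, in the $C^s(M,I)$ norm, by a function lying in $D^s_t(M, I)$ for some $t \in (0,1)$. The natural candidate for the approximant is a \emph{truncation}: given $f$ with wavelet coefficients $\alpha_{j_0,k}, \beta_{j,k}$, fix a large $J$ and set $\beta_{j,k} = 0$ for all $j \ge J$, keeping all other coefficients unchanged. Call this $f_J$. Since $f \in C^s(M, I)$ forces $\abs{\beta_{j,k}} \le M 2^{-j(s+1/2)}$ on $S_{j,k}\subseteq I$, the tail we discard is geometrically small, so $f_J \to f$ in the $C^s(M,I)$ norm (indeed in any $C^{s'}(M,I)$ norm with $s' < N$), and $f_J \in C^s(M, I)$ as well because we have only set coefficients to zero, which cannot increase any of the relevant suprema.

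The next step is to check that $f_J \in D^s_t(M, I)$ for a suitable $t$. The condition \eqref{eq:parent-cond} must be verified for every $j \ge \lceil j_0/t\rceil$ and every $k$ with $S_{j,k} \cap I \ne \emptyset$. For $j \ge J$ the coefficient $\beta_{j,k}$ of $f_J$ is zero, so the required inequality $\abs{\beta_{j',k'}} \ge (j'/j)2^{(j-j')(s+1/2)}\abs{\beta_{j,k}}$ holds trivially with the right-hand side equal to zero (any admissible $j', k'$ works, e.g.\ $j' = \lfloor tj\rfloor$ and the unique $k'$ with $S_{j',k'} \supset S_{j,k}$, which exists once $t$ is close enough to $1$ that the supports nest appropriately — this is where the geometry of the CDV wavelet supports $S_{j,k} = 2^{-j}[k-L+1,k+L)\cap[0,1)$ enters). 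So the condition is automatic at fine scales. The remaining, genuinely finite, set of constraints is over $\lceil j_0/t\rceil \le j < J$. For these finitely many $j$, there may well be coefficients $\beta_{j,k}$ of $f$ that violate \eqref{eq:parent-cond} — but here I will perturb once more: shrink each of these finitely many "bad" fine-scale coefficients by a tiny factor, or equivalently nudge up the relevant coarse-scale coefficient, by an amount small enough to keep us within distance $\varepsilon$ of $f$ in the $C^s(M,I)$ norm and within the ball of radius $M$. Because only finitely many scales and coefficients are involved, a single sufficiently small perturbation repairs all of them simultaneously.

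The main obstacle I anticipate is bookkeeping around two interacting parameters: the choice of $t$ must be made \emph{before} fixing $J$ in a way that is consistent with the support-nesting requirement "$\exists\, k': S_{j',k'} \supset S_{j,k}$" at the boundary scale $j' = \lfloor tj\rfloor$, and yet the finite block of scales $\lceil j_0/t\rceil \le j < J$ that needs hand-repair grows as $t \to 1$. The resolution is to fix $t$ first (close enough to $1$ that supports nest for all $j$, using that $S_{j,k}$ is an interval of width $2^{-j}(2L-1)$ so $S_{j',k'}$ of width $2^{-j'}(2L-1) \gg 2^{-j}(2L-1)$ contains it once $j - j'$ is large, i.e.\ once $(1-t)j$ exceeds a constant depending on $L$), then observe that for this fixed $t$ only finitely many scales $j < \lceil j_0/t\rceil$-through-wherever are problematic after truncation, and handle them by the finite perturbation above. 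A secondary technical point is that $\mathcal D$ is defined as a union over all $t \in (0,1)$, which only helps us: we need the approximant in $D^s_t(M,I)$ for \emph{some} $t$, so we are free to choose $t$ as large as convenient. Once the approximation is in hand, nowhere density of $\mathcal D$ follows: its complement is dense, and it is routine that $\mathcal D$ is itself closed (each $D^s_t(M,I)$ need not be open, but the union $\bigcup_t D^s_t(M,I)$ has dense complement, and an elementary argument — or simply noting we have shown every point of $C^s(M,I)$ is a limit of points outside $\mathcal D$, together with $\mathcal D$ having empty interior by the same approximation applied to points of $\mathcal D$ — gives that $\mathcal D$ is nowhere dense).
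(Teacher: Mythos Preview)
Your proposal has a genuine gap at the final step, and a secondary problem in the ``finite perturbation'' step.

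\textbf{Density is not nowhere density.} Even if your construction succeeds, you have only shown that $\mathcal D^c = \bigcup_t D^s_t(M,I)$ is dense in $C^s(M,I)$. That gives $\mathcal D$ empty interior, not nowhere density: the rationals in $\R$ have dense complement and empty interior, yet are far from nowhere dense. Your last paragraph conflates these two notions; the claim that ``$\mathcal D$ is itself closed'' is unsupported and almost certainly false. What is actually needed is an \emph{open} set inside $\mathcal D^c$ meeting every ball, and your truncated approximant $f_J$ cannot supply one: any $g$ in a $C^s$-ball around $f_J$ may have nonzero coefficients at levels $j \ge J$, which then require parents you have not arranged.

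\textbf{The finite repair does not work as written.} For $\lceil j_0/t\rceil \le j < J$, a coefficient $\beta_{j,k}$ of size $\sim M 2^{-j(s+1/2)}$ whose candidate parents all vanish cannot be fixed by ``shrinking by a tiny factor'' (the parent bound stays zero) or by ``nudging up a coarse coefficient by a small amount'' (the required size is $\sim (j'/j) M 2^{-j'(s+1/2)}$, which is order $tM$ in the $C^s$ norm --- not small for your choice of $t$ near $1$). Incidentally, your instinct about support nesting is backwards: nesting $S_{j,k} \subset S_{j',k'}$ is \emph{easier} when $j-j'$ is large, i.e.\ when $t$ is small.

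The paper's argument avoids both difficulties by going the other way: instead of zeroing out fine coefficients, it \emph{inflates} every coefficient so that $|\beta'_{j,k}| \ge \tfrac23\varepsilon\, 2^{-j(s+1/2)}$, at cost $\tfrac23\varepsilon$ in the $C^s$ norm. This lower bound is robust under a further $\tfrac13\varepsilon$-perturbation, so the entire ball $B_{\varepsilon/3}(f')$ lies in $D^s_t(M,I)$ for $t=\varepsilon/3M$ (small, not large). That yields an open sub-ball of every ball disjoint from $\mathcal D$, which is exactly nowhere density.
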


\subsection{Spatially-inhomogeneous functions}
\label{sec:choice}

We now discuss our choice of functions to consider. We begin with some
well-known results, which describe the limitations of adaptive sensing
over H\"older classes. Let
\[\alpha(s) \coloneqq s/(2s+1),\]
and define an adaptive-sensing algorithm to be a choice of design
points \(x_n = x_n(Y_1, \dots, Y_{n-1}),\) together with an estimator
\(\hat f_n = \hat f_n(Y_1, \dots, Y_n).\) Then, up to log
factors, a spatially-adaptive estimate can attain the rate
\(n^{-\alpha(s)}\) over any local H\"older class \(C^s(M, I),\) and
this cannot be improved upon by adaptive sensing.

\begin{theorem}
  \label{thm:holder-rates}
  Using a uniform design \(x_i = (i-1)/n,\) there exists an estimator
  \(\hat f_n,\) which satisfies
  \[\sup_{x \in J} \abs{\hat f_n(x) - f(x)} = O_p(c_n),\]
  uniformly over \(f \in C^s(M, I) \cap C^{\frac12}(M),\) for any \(s
  \in [\frac12, N),\) \(M > 0,\) \(I\) an interval open in \([0,
  1],\) \(J \subseteq I\) a closed interval, and
  \[c_n = (n/\log(n))^{-\alpha(s)}.\]
\end{theorem}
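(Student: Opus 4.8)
The plan is to take $\hat f_n$ to be the hard-threshold estimator \eqref{eq:f-hat-defn} computed from the uniform design $\xi_n$, and then run the classical universal-threshold argument, localised to $J$. With $\xi_n$ a regular dyadic grid and $2^{\jmax}\sim n/\log(n)$, the index $i_n(j,k)$ in \eqref{eq:i-defn} equals, for every relevant pair $(j,k)$, a common value $i_n$ with $2^{i_n}\asymp n$ (the general case $x_i=(i-1)/n$ follows by monotonicity in the number of design points). Applying the fast wavelet transform to $\bigl(2^{-i_n/2}Y(2^{-i_n}k)\bigr)_k$ then yields $\hat\beta_{j,k}=\beta_{j,k}+b_{j,k}+z_{j,k}$, where $z_{j,k}$ is Gaussian of standard deviation $\sigma 2^{-i_n/2}=e_n(j,k)/\sqrt{2\log(n)}$ and $b_{j,k}$ is the deterministic error from the point-evaluation approximation \eqref{eq:scale-approx}. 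The global hypothesis $f\in C^{1/2}(M)$ forces $\abs{b_{j,k}}\lesssim 2^{-i_n}$ uniformly in $(j,k)$, which is $o(e_n(j,k))$, and likewise $\abs{\hat\alpha_{j_0,k}-\alpha_{j_0,k}}\lesssim e_n(j_0,k)\asymp\sqrt{\log(n)/n}$.

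Next I would fix the event $\mathcal A_n\coloneqq\{\max_{j,k}\abs{z_{j,k}}\le e_n(j,k)\}$. There are $O(n/\log(n))$ coefficients, each $z_{j,k}$ Gaussian with the stated variance, so a Gaussian tail bound together with a union bound gives $\P(\mathcal A_n)\to 1$; crucially this is uniform in $f$, since the law of the $z_{j,k}$ does not depend on $f$. On $\mathcal A_n$ a keep/kill case analysis---using $\kappa>1$ to absorb both the noise $z_{j,k}$ and the small bias $b_{j,k}$---gives the oracle-type bound $\abs{\hat\beta^T_{j,k}-\beta_{j,k}}\lesssim\min(\abs{\beta_{j,k}},e_n(j,k))$ for every $j_0\le j<\jmax$ and every $k$.

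To pass from coefficients to the function on $J$, I would use that the $\psi_{j,k}$ are compactly supported, so for $x\in J$ and each scale $j$ only $O(1)$ of them are nonzero, all with $S_{j,k}$ within $O(2^{-j})$ of $J$; since $J$ lies at a positive distance from the boundary of the open interval $I$, there is a fixed $j_1$ with $S_{j,k}\subseteq I$ for all these $k$ once $j\ge j_1$, whence $\abs{\beta_{j,k}}\le M2^{-j(s+1/2)}$ there. With $\norm{\psi_{j,k}}_\infty\lesssim 2^{j/2}$ and bounded overlap this yields, on $\mathcal A_n$,
\[
\sup_{x\in J}\abs{\hat f_n(x)-f(x)}\ \lesssim\ \sqrt{\tfrac{\log(n)}{n}}\ +\ \sum_{j=j_0}^{\jmax-1}2^{j/2}\max_{k:S_{j,k}\cap J\ne\emptyset}\abs{\hat\beta^T_{j,k}-\beta_{j,k}}\ +\ \sum_{j\ge\jmax}2^{j/2}\max_{k}\abs{\beta_{j,k}},
\]
where for the finitely many $j<j_1$ in the middle sum one simply uses $\abs{\hat\beta^T_{j,k}-\beta_{j,k}}\lesssim e_n(j,k)$. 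The tail sum is $\lesssim\sum_{j\ge\jmax}2^{-js}\lesssim 2^{-\jmax s}\asymp(n/\log(n))^{-s}=o(c_n)$, and the first term is $o(c_n)$ because $\alpha(s)<1/2$. For the middle sum, split at $j^\ast$ with $2^{j^\ast}\asymp(n/\log(n))^{1/(2s+1)}$: the levels $j\le j^\ast$ contribute $\lesssim 2^{j^\ast/2}\sqrt{\log(n)/n}\asymp c_n$ via the bound $e_n(j,k)\asymp\sqrt{\log(n)/n}$, and the levels $j>j^\ast$ contribute $\lesssim\sum_{j>j^\ast}2^{j/2}M2^{-j(s+1/2)}=M\sum_{j>j^\ast}2^{-js}\lesssim 2^{-j^\ast s}\asymp c_n$ via $\abs{\hat\beta^T_{j,k}-\beta_{j,k}}\lesssim\abs{\beta_{j,k}}$. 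All constants depend only on $s,M,\sigma,\kappa$ and the basis, so $\sup_{x\in J}\abs{\hat f_n(x)-f(x)}\lesssim c_n$ on $\mathcal A_n$, and since $\P(\mathcal A_n)\to 1$ uniformly in $f$ this is the claimed $O_p(c_n)$ uniformly over the class.

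I expect the main work to be bookkeeping rather than any single hard step: the delicate points are (i) making every estimate genuinely uniform over $f\in C^s(M,I)\cap C^{1/2}(M)$, and (ii) reconciling the hypothesis, which is local on $I$, with a sup-norm bound on the strictly smaller set $J$---that is, checking that the finitely many coarse-scale and near-boundary coefficients (where only the global $C^{1/2}$ bound is available), together with the point-evaluation bias and the truncation at $\jmax$, are all $o(c_n)$, so that the rate is driven purely by the bias--variance balance at level $j^\ast$.
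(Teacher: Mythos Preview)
Your proposal is correct and follows essentially the same approach as the paper. The paper packages the argument as a one-line corollary of \autoref{thm:alg-holder-rates} (specialised to $n=n_0$), but the content of that route---\autoref{lem:prob-model} for the high-probability event on which \eqref{eq:coeff-bounds} holds, the keep/kill oracle bound $\abs{\hat\beta^T_{j,k}-\beta_{j,k}}\lesssim\min(e_n(j,k),\abs{\beta_{j,k}})$ from the proof of \autoref{lem:algorithm-accurate}, and the bias--variance split at $2^{j_n}\sim(n/\log n)^{1/(2s+1)}$ as in \eqref{eq:holder-error-bound}---is exactly what you have written out directly.
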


\begin{theorem}
  \label{thm:holder-lower-bound}
  Let \(s \ge \frac12,\) \(M > 0,\) \(I\) an interval open in \([0,
  1],\) and \(J \subseteq I\) a closed interval. Given an
  adaptive-sensing algorithm with estimator \(\hat f_n,\) if
  \[\sup_{x \in J} \abs{\hat f_n(x) - f(x)} = O_p(c_n)\]
  uniformly over \(f \in C^s(M, I) \cap C^{\frac12}(M),\) then
  \[c_n \gtrsim n^{-\alpha(s)}.\]
\end{theorem}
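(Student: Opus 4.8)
The plan is a minimax lower bound by reduction to hypothesis testing, set up so that the adaptivity of the design is neutralised. A two-point comparison will not work here: against fixed functions \(f_0,f_1\) an adaptive algorithm may concentrate all of its samples in the region where \(f_0\) and \(f_1\) differ and distinguish them easily. Instead one hides a single ``bump'' among \(\asymp 2^j\) candidate locations and exploits that, with only \(n\) samples, the algorithm cannot probe all of them adequately; this is the mechanism behind active-learning lower bounds (cf.\ the \(L^2\) argument of \citet{castro_faster_2006}).

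Concretely, fix \(j\) large and a point \(x^\ast\) with \(\abs{\psi(x^\ast)}\ge c_\psi>0\), and set \(K=K_j\coloneqq\{k : S_{j,k}\subseteq I,\ 2^{-j}(k+x^\ast)\in J\}\); since \(J\) is a closed interval of positive length inside the open set \(I\), we have \(\abs{K}\asymp 2^j\) for \(j\) large. With \(\gamma\coloneqq cM2^{-j(s+\frac12)}\) for a small \(c\in(0,1]\) to be chosen, consider the hypotheses \(f_0\coloneqq 0\) and \(f_k\coloneqq\gamma\psi_{j,k}\), \(k\in K\). Each \(f_k\) has a single nonzero wavelet coefficient \(\beta_{j,k}=\gamma\); then \(2^{j(s+\frac12)}\abs{\beta_{j,k}}=cM\le M\) gives \(f_k\in C^s(M,I)\), and (using \(s\ge\frac12\)) \(2^{j(\frac12+\frac12)}\abs{\beta_{j,k}}=cM2^{j(\frac12-s)}\le cM\le M\) gives \(f_k\in C^{\frac12}(M)\); \(f_0\) lies in both classes trivially. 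Finally \(\abs{f_k(2^{-j}(k+x^\ast))}=\gamma2^{j/2}\abs{\psi(x^\ast)}\ge c_\psi cM2^{-js}\), so \(\sup_{x\in J}\abs{f_k(x)-f_0(x)}\gtrsim 2^{-js}\).

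Let \(\P_0,\P_k\) be the laws of the data \((x_1,Y_1,\dots,x_n,Y_n)\) under \(f_0,f_k\), and \(N_k\) the number of design points in \(S_{j,k}\). Because each \(x_i\) is a (possibly randomised) function of \((x_1,Y_1,\dots,x_{i-1},Y_{i-1})\) only, and not of the true \(f\), the chain rule for relative entropy tensorises along the adaptive design, and with \(\psi_{j,k}\) supported in \(S_{j,k}\) and \(\norm{\psi_{j,k}}_\infty\lesssim 2^{j/2}\),
\[\mathrm{KL}(\P_0\,\|\,\P_k)=\frac{1}{2\sigma^2}\E_0\!\left[\sum_{i=1}^n\bigl(f_0(x_i)-f_k(x_i)\bigr)^2\right]\le\frac{C\gamma^2 2^j}{2\sigma^2}\,\E_0[N_k].\]
The supports \(S_{j,k}\) overlap with bounded multiplicity, so \(\sum_{k\in K}N_k\lesssim n\) deterministically, and averaging over \(k\in K\),
\[\frac{1}{\abs{K}}\sum_{k\in K}\mathrm{KL}(\P_0\,\|\,\P_k)\lesssim\frac{\gamma^2 2^j}{\sigma^2}\cdot\frac{n}{\abs{K}}\asymp\frac{c^2M^2\,n\,2^{-j(2s+1)}}{\sigma^2}.\]
Choosing \(j=j(n)\) with \(2^j\asymp(M^2n/\sigma^2)^{1/(2s+1)}\) makes the right-hand side \(\asymp c^2\), hence arbitrarily small for small \(c\); Pinsker's inequality together with Jensen then yields \(\abs{K}^{-1}\sum_{k\in K}\mathrm{TV}(\P_0,\P_k)<1\), so some \(k^\ast\in K\) has \(\mathrm{TV}(\P_0,\P_{k^\ast})\le 1-\delta\) for a fixed \(\delta>0\).

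Finally put \(a_n\coloneqq\tfrac12 c_\psi cM2^{-js}\asymp n^{-\alpha(s)}\). The events \(\{\sup_J\abs{\hat f_n-f_0}<a_n\}\) and \(\{\sup_J\abs{\hat f_n-f_{k^\ast}}<a_n\}\) are disjoint, so \(\P_0(\cdot)+\P_{k^\ast}(\cdot)\le 1+\mathrm{TV}(\P_0,\P_{k^\ast})\le 2-\delta\), and hence \(\P_f(\sup_{x\in J}\abs{\hat f_n(x)-f(x)}\ge a_n)\ge\delta/2\) for at least one of \(f\in\{f_0,f_{k^\ast}\}\), both lying in \(C^s(M,I)\cap C^{\frac12}(M)\). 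Thus \(\sup_f\P_f(\sup_{x\in J}\abs{\hat f_n-f}\ge a_n)\ge\delta/2\) for every \(n\), which is incompatible with the assumed uniform \(O_p(c_n)\) bound unless \(c_n\gtrsim a_n\asymp n^{-\alpha(s)}\). The delicate step, I expect, is the information bound for the adaptive design — recognising that randomising the bump location is essential, and carrying the relative-entropy estimate through the adaptive chain rule so that only the (necessarily small) average number of probes per candidate location enters; the class membership of the \(f_k\), the placement of a peak inside \(J\) (with the routine modifications near the endpoints of \(I\)), and the final testing-to-estimation step are routine.
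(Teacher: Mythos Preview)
Your argument is correct. The key step you flag --- the chain-rule KL bound \(\mathrm{KL}(\P_0\|\P_k)=\tfrac{1}{2\sigma^2}\E_0\sum_i(f_0(x_i)-f_k(x_i))^2\) for an adaptive design --- is valid because the design policy is the same under both hypotheses, so the conditional law of \(x_i\) given the past cancels in the likelihood ratio; the averaging \(\sum_k\E_0[N_k]\lesssim n\) then does the work, and the Pinsker/Jensen reduction to a single bad \(k^\ast\) is standard.

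The paper, however, does not prove this result directly: it obtains it as a one-line corollary of the general adaptive lower bound \autoref{thm:adaptive-lower-bound}, applied to \(\mathcal F=B^s_{\infty,\infty}(M)\cap D^s_t(M,J)\subseteq C^s(M,I)\cap C^{\frac12}(M)\). That general bound is proved via \autoref{lem:adaptive-bounds}, an Assouad-type argument with a product-Bernoulli prior over subsets \(K\) of bump locations and a Hamming-loss estimate; the paper remarks that it avoids a direct Fano route precisely because adaptivity makes that delicate. Your approach sidesteps both Fano and Assouad by averaging the KL to the null over the single-bump alternatives and then dropping to a two-point comparison; this is more elementary and fully self-contained for the H\"older statement. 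What the paper's route buys is reuse: the single Assouad lemma simultaneously yields the Besov bound \autoref{thm:besov-lower-bound} and the sharper \(\mathcal F\)-class bound \autoref{thm:adaptive-lower-bound}, where many bumps must be placed at once and a two-point reduction would not suffice.
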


To benefit from adaptive sensing, we will need to exploit two features
of the functions in \autoref{fig:functions}. The first is that, as
discussed in \autoref{sec:classes}, these functions are sparse: they
are rougher in some regions than others. This sparsity is necessary to
benefit from adaptive sensing: it is the difference between rough and
smooth which allows us to improve performance, placing more design
points in rougher regions.

Sparsity is commonly measured in terms of Besov, rather than H\"older,
classes. This change alone, however, is not enough to allow us to
benefit from adaptive sensing. Since \(B^r_{p,\infty}(M) \subseteq
C^{r-1/p}(M),\) over this class we can achieve the rate
\(n^{-\alpha\left(r-1/p\right)},\) up to log factors, with the
fixed-design method of \autoref{thm:holder-rates}. We can further show
that, in this case, adaptive sensing offers little improvement.

\begin{theorem}
  \label{thm:besov-lower-bound}
  Let \(p \in [1, \infty],\) \(r \ge \frac12+\frac1p,\) \(M > 0,\) and
  \(I\) be an interval in \([0, 1].\) Given an adaptive-sensing algorithm
  with estimator \(\hat f_n,\) if
  \[\sup_{x \in I}\abs{\hat f_n(x) -f(x)} = O_p(c_n),\]
  uniformly over \(f \in B^r_{p,\infty}(M),\) then
  \[c_n \gtrsim n^{-\alpha\left(r-\frac1p\right)}.\]
\end{theorem}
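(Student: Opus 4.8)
The plan is to reduce the lower bound to a standard two-point (or multiple-point) testing argument against a carefully chosen family of functions supported inside $I$. Fix a small closed subinterval $J_0 \subseteq I$ of length bounded away from $0$, and for a resolution level $j$ to be chosen place a single bump $f_{j,k} = \gamma_j \psi_{j,k}$ at a location $k$ with $S_{j,k} \subseteq J_0$. The amplitude $\gamma_j$ is chosen to sit exactly on the Besov boundary: a single coefficient of size $\gamma_j$ at level $j$ contributes $2^{j(r+1/2-1/p)}\gamma_j$ to the $B^r_{p,\infty}$ norm, so we take $\gamma_j \asymp M 2^{-j(r+1/2-1/p)}$, ensuring $f_{j,k} \in B^r_{p,\infty}(M')$ for a fixed $M'$ (after using a fraction of the allowed norm, say $M/2$, which is harmless up to constants). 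The sup-norm error incurred by failing to detect such a bump is of order $\gamma_j 2^{j/2} \asymp 2^{-j(r-1/p)}$, since $\|\psi_{j,k}\|_\infty \asymp 2^{j/2}$; matching this to $c_n$ will force $c_n \gtrsim 2^{-j^\ast(r-1/p)}$ for the critical level $j^\ast$, and it remains to show $2^{j^\ast} \gtrsim n^{1/(2(r-1/p)+1)}$, i.e. $c_n \gtrsim n^{-\alpha(r-1/p)}$.

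The core step is the information-theoretic bound controlling how large $j$ can be while the bump remains statistically invisible, and this is where adaptivity of the design must be handled. The key observation is the classical fact (as in Castro--Nowak-type arguments) that for Gaussian observations $Y_n = f(x_n)+\varepsilon_n$ with \emph{sequentially chosen} design points, the Kullback--Leibler divergence between the joint laws of $(x_n,Y_n)_{n\le N}$ under $f \equiv 0$ and under $f = f_{j,k}$ is bounded by $\tfrac{1}{2\sigma^2}\,\E_0\big[\sum_{n=1}^N f_{j,k}(x_n)^2\big]$ — the expectation under the null accounts for the fact that the adaptive design cannot concentrate on $\supp f_{j,k}$ without already having detected it. Since $\|f_{j,k}\|_\infty^2 \asymp \gamma_j^2 2^j \asymp 2^{-2j(r-1/p)}$ and the support has measure $\asymp 2^{-j}$, any fixed-measure argument would only give $\E_0[\sum f_{j,k}(x_n)^2] \lesssim N 2^{-2j(r-1/p)}$ — but this is not quite enough on its own for small $p$, so instead I would use a family of $2^{j}$ candidate bumps $\{f_{j,k} : S_{j,k}\subseteq J_0\}$ and apply Fano's inequality (or Assouad), exploiting that the adaptive design can ``probe'' only a limited number of the $2^j$ possible locations with its $N$ queries: the average (over $k$) of $\E_0[\sum_n f_{j,k}(x_n)^2]$ is $\lesssim 2^{-j}\cdot N \cdot \gamma_j^2 2^j \cdot = N\gamma_j^2$... more carefully, $2^{-j}\sum_k \E_0[\sum_n \gamma_j^2\psi_{j,k}(x_n)^2] \lesssim 2^{-j} \gamma_j^2 2^j N = \gamma_j^2 N \asymp M^2 2^{-2j(r+1/2-1/p)} N$. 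Setting this $\asymp 1$ gives $2^{j} \asymp (M^2 N)^{1/(2(r+1/2-1/p))}$; substituting into $c_n \asymp 2^{-j(r-1/p)}$ yields the exponent $\alpha(r-1/p) = (r-1/p)/(2(r-1/p)+1)$ after checking the arithmetic $2(r+1/2-1/p) = 2(r-1/p)+1$, which indeed matches.

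The remaining steps are routine: (i) verify $f_{j,k}\in B^r_{p,\infty}(M)$ including the scaling-coefficient part of the norm, which is automatic since $f_{j,k}$ has only one nonzero wavelet coefficient and (for $j>j_0$) no scaling coefficients at level $j_0$; (ii) translate the Fano/Assouad bound into a statement about $\P(\sup_{x\in I}|\hat f_N(x)-f(x)| \ge \tfrac12 c_N)$ being bounded away from $0$ uniformly over the family, which contradicts the $O_p(c_N)$ hypothesis unless $c_N \gtrsim 2^{-j^\ast(r-1/p)}$; and (iii) handle the case $p=\infty$ separately, where the family degenerates to essentially a single bump and one recovers \autoref{thm:holder-lower-bound} with $s=r$, so the bound $n^{-\alpha(r)}$ follows directly from that theorem.

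The main obstacle is getting the adaptive-design KL bound and the Fano argument to interact correctly so that the exponent comes out sharp: one must ensure that the $\E_0[\cdot]$ (rather than $\E_{f}[\cdot]$) in the divergence bound is genuinely available — this is exactly the standard trick that makes such lower bounds robust to adaptive sensing — and that averaging over the $2^j$ bump locations is legitimate inside Fano's inequality. A secondary subtlety is that the wavelets $\psi_{j,k}$ have support width $\asymp 2^{-j}(2L-1)$ rather than exactly $2^{-j}$, so the ``number of disjoint-support bumps fitting in $J_0$'' is $\asymp 2^j$ only up to the constant $L$; this affects only constants. I do not expect the construction itself to be difficult, since it is the textbook minimax lower-bound recipe specialized to a single-coefficient perturbation on the Besov boundary; the care is entirely in the adaptive-sensing bookkeeping.
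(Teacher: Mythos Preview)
Your construction and arithmetic are correct, and the route differs from the paper's. The one point that needs sharpening is precisely the issue you flag: in its standard forms, Fano's inequality uses either the pairwise average $M^{-2}\sum_{k,k'} D(\P_{f_{j,k}}\|\P_{f_{j,k'}})$ or $M^{-1}\sum_k D(\P_{f_{j,k}}\|Q)$, and for a sequential design the chain rule evaluates these under $\E_{f_{j,k}}$, not $\E_0$; under $\P_{f_{j,k}}$ the design may concentrate on $\supp\psi_{j,k}$, and those quantities can be as large as $n\gamma_j^2 2^j$. What your $\E_0$ computation cleanly feeds into is a Le Cam two-point (detection) argument rather than Fano: by convexity of $Q\mapsto D(\P_0\|Q)$,
\[D\bigl(\P_0\,\big\|\,\bar\P\bigr)\ \le\ \frac1M\sum_k D(\P_0\|\P_{f_{j,k}})\ \lesssim\ n\gamma_j^2,\qquad \bar\P\coloneqq\frac1M\sum_k\P_{f_{j,k}},\]
so Pinsker keeps $\norm{\P_0-\bar\P}_{TV}$ bounded away from $1$, and no estimator with $c_n=o(\gamma_j 2^{j/2})$ can separate $f=0\in B^r_{p,\infty}(M)$ from the mixture. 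With that adjustment your argument goes through verbatim.

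The paper instead applies its Assouad-type \autoref{lem:adaptive-bounds}: with $s=r-\tfrac1p$ and $2^j\sim n^{1/(2s+1)}$, it uses \emph{subsets} $K$ of at most $\varepsilon_n^{-p}$ bumps of shrunken amplitude $M\varepsilon_n 2^{-j(s+1/2)}$ (so that $f_{n,K}\in B^r_{p,\infty}(M)$ for all such $K$), obtains $c_n\gtrsim M\varepsilon_n n^{-\alpha(s)}$ for every $\varepsilon_n\to 0$ slowly, and hence $c_n\gtrsim n^{-\alpha(s)}$. Note your single-bump family would not plug into Assouad directly, since with $\gamma_j$ on the Besov boundary only one nonzero coordinate fits in the ball; the $\varepsilon_n$ slack is exactly what buys the hypercube structure. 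Your (corrected) Le Cam route is the more elementary one for this particular theorem; the paper's lemma is a reusable device that also drives the sharper \autoref{thm:adaptive-lower-bound}.
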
  

To benefit from adaptive sensing, it is not enough to have regions in
which the function is rough or smooth; we must also be able to detect
where those regions are. This is the rationale behind our detectable
classes \(D^s_t(M, I)\): if a function is detectable, its roughness at
high resolutions will be signalled by corresponding roughness at low
resolutions, which we can observe in advance.

We will be interested in functions \(f\) which are both sparse and
detectable.  For \(p \in [1, \infty],\) \(r \in [\frac12+\frac1p,
N),\) \(s \in [r-\frac1p, N),\) \(t \in (0,1),\) \(M > 0,\) and any
interval \(I \subseteq [0, 1],\) let
\begin{equation}
\label{eq:f-defn}
\mathcal F = \mathcal F(p, r, s, t, M, I) \coloneqq B^r_{p,\infty}(M) \cap D^{s}_{t}(M,I)
\end{equation}
denote a class of sparse and detectable functions.

We note that this class has two smoothness parameters: \(r\) governs
the average global smoothness of a function \(f \in \mathcal F,\)
while \(s\) governs its local smoothness over \(I.\) Since functions
in \(B^r_{p,\infty}\) are everywhere at least \((r-\frac1p)\)-smooth,
we have restricted to the interesting case \(s \ge r - \frac1p.\)

From \autoref{prop:negligible-set}, we know that quasi-all locally
H\"older functions are detectable; we can likewise show that under a
fixed design, restricting to sparse and detectable functions does not
alter the minimax rate of estimation. We may thus conclude that
requiring sparsity and detectability thus does not make estimation
fundamentally easier.

\begin{theorem}
  \label{thm:fixed-lower-bound}
  Using a fixed design, if an estimator \(\hat f_n\) satisfies
  \[\sup_{x \in I} \abs{\hat f_n(x) - f(x)} = O_p(c_n),\]
  uniformly over \(f \in \mathcal F,\) then
  \[c_n \gtrsim (n/\log(n))^{-\alpha(s)}.\]
\end{theorem}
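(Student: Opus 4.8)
The plan is to establish the lower bound by a standard two-point (or many-point) hypothesis-testing argument, adapted to show that the difficulty persists even after intersecting the Besov ball with a detectable class. Since we already have \autoref{thm:holder-lower-bound}, which gives the rate \((n/\log n)^{-\alpha(s)}\) — up to the precise log power — over \(C^s(M, I) \cap C^{1/2}(M)\) under a \emph{fixed} design, it suffices to exhibit a family of ``hard'' perturbations that (i) lie in \(\mathcal F = B^r_{p,\infty}(M) \cap D^s_t(M, I)\) for suitable fixed \(t\), and (ii) is rich enough to force the same \((n/\log n)^{-\alpha(s)}\) lower bound. In other words, I would not re-run the information-theoretic machinery from scratch; I would instead show that the extremal configurations used to prove \autoref{thm:holder-lower-bound} can be taken to be sparse and detectable, so that the fixed-design lower bound transfers verbatim to the smaller class \(\mathcal F\).

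Concretely, first I would recall the structure of the \autoref{thm:holder-lower-bound} construction: at resolution level \(j_n\) with \(2^{j_n} \sim n/\log n\), one considers functions of the form \(f_{k} = c_n\, 2^{-j_n/2}\, \psi_{j_n, k}\) (times the \(C^s\)-normalising factor \(2^{-j_n s}\)), indexed by locations \(k\) with \(S_{j_n,k} \subseteq I\), plus possibly a fixed smooth ``background'' to handle the \(C^{1/2}(M)\) constraint; the \(\log n\) factor in the rate comes precisely from needing to distinguish among \(\sim 2^{j_n}\) such hypotheses. Second, I would build a modified family by adding, to each single-bump hypothesis \(f_k\), a \emph{cascade of ancestors}: for every scale \(j'\) with \(\lfloor t j_n \rfloor \le j' < j_n\), include a wavelet term \(\beta_{j', k'} \psi_{j', k'}\) at the unique ancestor index \(k'\) with \(S_{j',k'} \supseteq S_{j_n, k}\), choosing \(\beta_{j',k'}\) to sit exactly at the threshold \((j'/j_n) 2^{(j_n - j')(s+1/2)} |\beta_{j_n,k}|\) demanded by \eqref{eq:parent-cond}. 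This makes each hypothesis detectable by construction. The key point is a norm bookkeeping check: because the ancestor coefficients decay geometrically as \(j'\) decreases (the factor \(2^{(j_n-j')(s+1/2)} |\beta_{j_n,k}| \sim 2^{(j_n-j')(s+1/2)} 2^{-j_n(s+1/2)} = 2^{-j'(s+1/2)}\) scaled by \(c_n\)), both the \(C^s(M,I)\) seminorm and the \(B^r_{p,\infty}(M)\) seminorm of the added cascade are of the same order as, or smaller than, that of the original bump, so a suitable choice of the constant hidden in \(c_n \asymp (n/\log n)^{-\alpha(s)}\) keeps everything inside \(\mathcal F\); the parameter \(t \in (0,1)\) can be fixed at any convenient value (it only shrinks the class, which is in our favour for a lower bound). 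Third, I would verify that adding these deterministic ancestor terms does not help the statistician: they are the \emph{same} for structurally-related hypotheses, or more simply, the pairwise KL divergences (or \(\chi^2\) / Hellinger affinities) between the corresponding observation laws under the fixed design are affected only by a constant factor, since the ancestor coefficients are smaller in sup-norm than the level-\(j_n\) bump and the design is fixed. Hence Fano's inequality (or Assouad, depending on how \autoref{thm:holder-lower-bound} was proved) yields the same \((n/\log n)^{-\alpha(s)}\) lower bound over \(\mathcal F\).

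The main obstacle I anticipate is the simultaneous-membership check: one must ensure the \emph{superposition} of a level-\(j_n\) bump and its entire ancestor cascade still has \(B^r_{p,\infty}\)-norm bounded by \(M\) \emph{and} \(C^s(M,I)\)-norm bounded by \(M\), for the \emph{same} constant \(M\), not merely up to constants — and this must hold for the many-hypothesis family used in Fano, where one may need bumps at \emph{several} locations \(k\) at once, whose cascades could overlap. Overlap at coarse scales \(j'\) close to \(\lfloor t j_n\rfloor\) is the delicate regime, since there a single coarse wavelet may be the common ancestor of many fine bumps and its coefficient could add up. I would handle this either by taking the hard hypotheses to be single-bump (if \autoref{thm:holder-lower-bound} was proved by a two-point argument with a union bound over locations, which is the usual route for sup-norm loss with the extra \(\log n\)), so that no overlap occurs; or, if a genuine many-hypothesis construction is needed, by choosing bump locations \(k\) spaced so that their \(\lfloor tj_n\rfloor\)-level ancestors are distinct, which costs at most a constant factor \(2^{\lceil(1-t)j_n\rceil}\)-to-\(2^{j_n}\) reduction in the number of hypotheses — still \(2^{\Omega(j_n)}\), hence still enough to produce the \(\log n\) factor. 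Everything else (the decay estimates, the KL bounds, the reduction to \autoref{thm:holder-lower-bound}) is routine.
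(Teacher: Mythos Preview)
Your proposal contains a genuine gap in the KL control step. You assert that the ancestor terms are ``smaller in sup-norm than the level-\(j_n\) bump'' and therefore perturb the pairwise KL only by a constant; this is backwards. With your threshold choice \(\beta_{j',k'} = (j'/j_n)2^{(j_n-j')(s+\frac12)}\abs{\beta_{j_n,k}}\), the sup-norm contribution of the level-\(j'\) ancestor is of order \(2^{-j's}\), which is \emph{larger} than the \(2^{-j_ns}\) contribution of the fine bump, and the wavelet coefficient itself is larger too. If each hypothesis \(f_k\) carries its own cascade rooted at \(k\), then \(f_k - f_{k'}\) differs already at the coarsest ancestor level \(j' \approx tj_n\), giving (for a uniform design, say) an average KL of order \(n\,2^{-tj_n(2s+1)} \asymp n^{1-t}(\log n)^t\), which overwhelms \(\log(\#\text{hypotheses}) \asymp \log n\) for any \(t<1\). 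Your spacing fix (distinct \(\lfloor tj_n\rfloor\)-ancestors) removes overlap but not this blow-up: the cascades still vary across hypotheses, so Fano still fails. A secondary issue is that your cascade stops at level \(\lfloor tj_n\rfloor\); the coefficient placed there then lacks a parent, so the construction is not in \(D^s_t(M,I)\) --- you must recurse the cascade geometrically down to a level below \(\lceil j_0/t\rceil\).

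The paper's proof resolves both points at once by the device you mention and then set aside: make the cascade \emph{common} to all hypotheses. Concretely, it fixes a single geometric chain \(j_m=j,\ j_{i-1}=\lfloor t j_i\rfloor\) down to \(j_1<\lceil j_0/t\rceil\), places the background \(\sum_{i<m} M2^{-j_i(s+\frac12)}\psi_{j_i,0}\) identically in every hypothesis, and lets only the finest bump \(C2^{-j(s+\frac12)}\psi_{j,k}\) vary over the \(2^{j_m-j_{m-1}}\) locations \(k\) with \(S_{j,k}\subset S_{j_{m-1},0}\). The background then cancels in every pairwise difference, and because the design is \emph{fixed}, a pigeonhole step selects the coarse interval \(S_{j_{m-1},0}\) containing \(\lesssim 2^{-j_{m-1}}n\) design points, which is exactly what keeps the averaged KL at \(O(\log n)\) against \(\log(2^{j_m-j_{m-1}})\asymp \log n\) alternatives. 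This pigeonhole-plus-common-cascade combination is the missing idea in your outline.
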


\subsection{Benefits of adaptive sensing}

With adaptive sensing, however, we can take advantage of these
conditions to obtain improved rates of convergence. We even can show
that \autoref{alg:sas} achieves this without knowledge of the class
\(\mathcal F;\) we can thus adapt not only to the regions where \(f\)
is rough, but also to the overall smoothness and sparsity of \(f.\)

\begin{theorem}
  \label{thm:alg-rates}
  \autoref{alg:sas} satisfies
  \[\sup_{x \in I} \abs{\hat f_n(x) - f(x)} = O_p(c_n),\]
  uniformly over \(f \in \mathcal F,\) for \(u \coloneqq \max(r-s,
  0),\)
  \begin{equation}
    \label{eq:rs-prime-defn}
    r' \coloneqq s + tu, \qquad s' \coloneqq s/(1-ptu),
  \end{equation}
  and
  \begin{equation}
    \label{eq:alg-rates}
    c_n \coloneqq \left(n/\log(n)^3\right)^{-\alpha\left(\min(r', s')\right)}\log(n)^{1(r'=s')}.
  \end{equation}
\end{theorem}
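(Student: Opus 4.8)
The argument follows the classical route for sup-norm control of hard-threshold wavelet estimators, the novelty being that the effective noise level \(e_n(j,k)\) now varies over \([0,1]\) according to how finely \autoref{alg:sas} has refined the design inside \(S_{j,k}\). Writing \(\hat f_n\) as in \eqref{eq:f-hat-defn} and \(f\) by its wavelet series, one has, for \(x\in I\),
\[
\hat f_n(x)-f(x)
= \sum_k(\hat\alpha_{j_0,k}-\alpha_{j_0,k})\varphi_{j_0,k}(x)
+ \sum_{j=j_0}^{\jmax-1}\sum_k(\hat\beta^T_{j,k}-\beta_{j,k})\psi_{j,k}(x)
- \sum_{j\ge\jmax}\sum_k\beta_{j,k}\psi_{j,k}(x).
\]
Since \(2^{\jmax}\sim n/\log(n)\) and \(r-\tfrac1p\ge\tfrac12\), the truncation term is \(O((n/\log(n))^{-1/2})=o(c_n)\); the scaling-coefficient term has \(O(1)\) summands, each estimated (not thresholded) with error \(O_p(e_n(j_0,k))=O_p(\sqrt{\log(n)/n})=o(c_n)\), because the effective density is everywhere \(\gtrsim\lambda\). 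Everything thus reduces to the thresholding term, and since for each \(x\in I\) only \(O(1)\) indices \(k\) per level have \(x\in S_{j,k}\), it suffices to bound \(\sum_{j=j_0}^{\jmax-1} 2^{j/2}\max_{k:\,x\in S_{j,k}} \bigl|\hat\beta^T_{j,k}-\beta_{j,k}\bigr|\) (working, as usual, at positive distance from \(\partial I\)).

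I would first analyse the design. The central claim is a bound on the maximum discrepancy \eqref{eq:max-disc}, which is deterministic given the target densities: because each \(p_m\) integrates to at most one while \autoref{alg:sas} spends \(\gtrsim n_m\) points greedily halving the largest value of \(p_{l,m}/q_{l,m,n}\) — and driving an interval \(I_{l,m}\) from its initial level below a target value \(V\) costs \(\lesssim p_{l,m}/(2^{\jmaxm}V)\) points, so the total cost to bring every interval below \(V\) is \(\lesssim 1/V\) — one obtains \(\max_l p_{l,m}/q_{l,m}\lesssim 1\) (up to log factors). On the estimation event \(\mathcal A\) below, the stage-\(n_{m-1}\) estimates \(\hat\beta^T\) correctly detect the large coefficients and produce ranks \(r_j(k)\) comparable to the true ones; combining this with the definition of \(p_{l,m}\) and the Besov bound \(\sum_k|\beta_{j,k}|^p\le M^p2^{-jp(r+1/2-1/p)}\) to control the ranks, one gets, whenever \(\beta_{j,k}\) is detectably large and \(I_{l,m}\subseteq S_{j,k}\),
\[
q_{l,m}\ \gtrsim\ \frac{2^{j}}{r_j(k)(\log(n))^2}\ \gtrsim\ \frac{2^{jp(r+1/2)}|\beta_{j,k}|^p}{M^p(\log(n))^2}.
\]
Since dyadic grids are nested, a lower bound on \(q_{l,m}\) for every \(I_{l,m}\subseteq S_{j,k}\) forces the grid \(2^{-i}\Z\) onto all of \(S_{j,k}\) for the corresponding \(i\); via \eqref{eq:i-defn} and \eqref{eq:e-defn} this yields
\[
e_n(j,k)^2\ \lesssim\ \frac{\sigma^2 M^p(\log(n))^3}{n\,2^{jp(r+1/2)}\,|\beta_{j,k}|^p}
\]
for such coefficients, while \(e_n(j,k)\lesssim\sigma\sqrt{\log(n)/(\lambda n)}\) everywhere.

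The second ingredient is the estimation-error event \(\mathcal A\). The delicate point is that the design \(\xi_{n_m}\) is a function of \(\varepsilon_1,\dots,\varepsilon_{n_{m-1}}\), whereas \(\hat f_{n_m}\) reuses all of \(\varepsilon_1,\dots,\varepsilon_{n_m}\). Conditioning on \(\mathcal G_{m-1}=\sigma(Y_1,\dots,Y_{n_{m-1}})\), the index \(i_n(j,k)\) is fixed, and \(\hat\beta_{j,k}\) equals its noise-free value plus a centred Gaussian of variance at most \(\sigma^22^{-i_n(j,k)}\) coming only from the fresh observations; the contribution of the already-present (hence conditionally fixed) observations has total weight-squared only a \(2^{(i'-i_n(j,k))}\)-fraction of the current one, where \(i'\) was the coarser resolution at which those points lay, and so, because grids only ever get finer, it is dominated by the current noise level. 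A union bound over the \(\lesssim n\log(n)\) admissible triples \((j,k,i)\) of resolutions, using the slack \(\kappa>1\) in the threshold, gives an event \(\mathcal A\) of probability tending to one uniformly in \(f\) on which \(|\hat\beta_{j,k}-\beta_{j,k}|\lesssim e_n(j,k)\) for all relevant \(j,k\); on \(\mathcal A\) the usual hard-threshold oracle bound \(|\hat\beta^T_{j,k}-\beta_{j,k}|\lesssim\min(|\beta_{j,k}|,e_n(j,k))\) holds, and the same event applied one stage earlier validates the detection hypothesis used above. One then intersects over the \(O(\log(n))\) stages.

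It remains to feed these bounds into \(\sum_j 2^{j/2}\min(|\beta_{j,k}|,e_n(j,k))\). On a level where \(\min=|\beta_{j,k}|\) one uses the H\"older bound \(|\beta_{j,k}|\le M2^{-j(s+1/2)}\); on a level where \(\min=e_n(j,k)\) one uses the design-derived bound, but \(\beta_{j,k}\) itself need not have been detected, so one invokes the detectability condition \eqref{eq:parent-cond} to replace it by a parent \(\beta_{j',k'}\) with \(\lfloor tj\rfloor\le j'<j\), \(S_{j,k}\subseteq S_{j',k'}\) and \(|\beta_{j',k'}|\gtrsim 2^{(j-j')(s+1/2)}|\beta_{j,k}|\); the support \(S_{j,k}\) then inherits the refinement earned by \(\beta_{j',k'}\), and the worst case \(j'=\lfloor tj\rfloor\) turns the exponent \(r+1/2\) above into \(r'+1/2\) with \(r'=s+tu\). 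Carrying out the level sums, the two families of bounds cross at a single level \(j^\ast\), where — after balancing the gain from refinement against the H\"older size bound and against the total-mass constraint on \(p_m\) — one branch contributes \(\asymp(n/\log(n)^3)^{-\alpha(s')}\) with \(s'=s/(1-ptu)\) and the other \(\asymp(n/\log(n)^3)^{-\alpha(r')}\); the slower of the two gives the stated \(c_n\), with the extra \(\log(n)^{1(r'=s')}\) arising exactly when the two branches cross at the same height so that all \(\Theta(\log(n))\) levels contribute comparably. I expect this last step — reconciling the H\"older size bound, the Besov control of the ranks, and the detectability chain so that precisely the exponents \(r'\) and \(s'\) of \eqref{eq:rs-prime-defn} emerge — together with the conditioning argument for the reused noise, to be the main obstacles; the greedy design analysis of \eqref{eq:max-disc} is the other substantive point.
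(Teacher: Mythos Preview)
Your overall architecture matches the paper's: decompose \(\hat f_n-f\) into scaling, thresholded and truncation pieces; show the effective density dominates the target density; use the Besov constraint to bound the rank of a detected coefficient and hence the target density near it; invoke detectability to pass refinement from a parent to a child; and sum over levels. Two ingredients, however, need more than you have written, and the paper handles both differently.

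\emph{The noise event.} Your conditioning-on-\(\mathcal G_{m-1}\) plan is more delicate than you acknowledge. After conditioning, the reused-noise contribution to \(\hat\beta_{j,k}^{i}\) is a \emph{fixed} linear combination \(2^{-i/2}\sum_{l\text{ old}}d_l^{(i)}\varepsilon_l\) whose weights \(d_l^{(i)}\) are not the weights \(d_l^{(i')}\) that appeared in the stage-\((m-1)\) estimate; knowing the latter combination is small does not bound the former, and a crude \(\ell^1\)--\(\ell^\infty\) bound on the realized \(\varepsilon_l\) loses too much. The paper sidesteps this completely by exploiting the fact that the algorithm only ever observes at dyadic points \(2^{-i}k\) with \(2^i\le n^2\): it pre-generates all such observations and takes the event to be \(\{\,|\hat\beta^i_{j,k}-\beta_{j,k}|\le\sigma2^{-i/2}\sqrt{2\log n}\ \text{for every admissible }(i,j,k)\,\}\). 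There are \(O(\imax 2^{\jmax})=O(n)\) triples, each failing with probability \(O(1/(n\sqrt{\log n}))\), and no conditioning is required.

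\emph{The detectability chain.} One application of \eqref{eq:parent-cond} is not enough. For the parent \(\beta_{j',k'}\) to have been detected at stage \(m-1\) you need \(|\beta_{j',k'}|\gtrsim e_{n_{m-1}}(j',k')\); but \(e_{n_{m-1}}(j',k')\) is small only if the design was already refined near \(S_{j',k'}\), which needs \emph{its} parent to have been detected at stage \(m-2\), and so on. The paper sets this up as an explicit induction: one shows \(|\beta_{j,k}|\ge(\kappa+1)2^{-jv}\delta_n\Rightarrow e_n(j,k)\le 2^{-jv}\delta_n\), with \(v=(r'+\tfrac12)/(1+\tfrac2p)\) and \(\delta_n\asymp(n/\log(n)^3)^{-1/(p+2)}\), by chaining parents down to a level \(\jmin\) (roughly \(2^{\jmin}\sim(n/\log n)^{1/(2r'+1)}\)) at which the uniform bound \(e_n\lesssim\sqrt{\log n/n}\) already suffices. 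The point you are missing is that this chain has \emph{bounded} length: every two parent steps reduce the level by at least a factor \(t\), and \(\jmax/\jmin\to 2r'+1\), so only a fixed finite number of backward stages are involved. Your ``intersect over \(O(\log n)\) stages'' would, by contrast, let the implicit constants in the induction blow up. Once this bounded-chain bound is in hand, the level sums split exactly as you describe and yield the rates \(r',s'\) of \eqref{eq:rs-prime-defn}.
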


We thus obtain, up to log factors, the weaker of the two rates
\(n^{-\alpha(r')}\) and \(n^{-\alpha(s')}.\) Both of these rates are
at least as good as the \(n^{-\alpha(s)}\) bound faced by a fixed
design; when \(s < r,\) and the function \(f\) may be locally rough,
the rates are strictly better. In that case, we obtain the
\(n^{-\alpha(r')}\) rate when \(s/r \ge (1-t)/(2-t),\) and the
\(n^{-\alpha(s')}\) rate otherwise.

The improvement is driven by two parameters: \(t,\) which governs how
easy it is to detect irregularities of \(f,\) and \(u,\) which governs
how much rougher \(f\) is locally than on average. When both \(t\) and
\(u\) are large, the rates we obtain are significantly improved; in
the most favourable case, when \(u = 1,\) and \(t \approx 1,\) this
result is equivalent to gaining an extra derivative of \(f.\) We can
even show that these rates are near-optimal over classes \(\mathcal
F.\)

\begin{theorem}
  \label{thm:adaptive-lower-bound}
  Given an adaptive-sensing algorithm with estimator \(\hat f_n,\) if
  \[\sup_{x \in I}\abs{\hat f_n(x) -f(x)} = O_p(c_n),\]
  uniformly over \(f \in \mathcal F,\) then
  \[c_n \gtrsim n^{-\alpha(\min(r', s'))},\]
  for \(r', s'\) given by \eqref{eq:rs-prime-defn}.
\end{theorem}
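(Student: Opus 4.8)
The plan is a standard minimax lower bound. We reduce estimation to a testing problem over a finite subfamily of $\mathcal F$; the only new feature is that the design is sequential, which we handle with an information inequality valid for adaptive designs. We argue separately according to whether $\min(r',s') = r'$ or $\min(r',s') = s'$ --- equivalently, whether $pr' \ge 1$ or $pr' \le 1$ --- writing $\theta \coloneqq \min(r',s')$ for the resulting ``effective smoothness''.

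In each case we fix a resolution level $j_1$, to be optimized, and construct functions $f_\eta \in \mathcal F$, $\eta \in \mathcal H$, that differ only through wavelet coefficients $\beta_{j_1,k}$, with $S_{j_1,k} \subseteq I$, each taking the value $0$ or $\delta$, where $\delta\,2^{j_1/2} \asymp 2^{-j_1\theta}$; since the $S_{j_1,k}$ overlap only boundedly, $\norm{f_\eta - f_{\eta'}}_\infty \gtrsim 2^{-j_1\theta}$ whenever two members differ in such a coefficient. Building this family is the substantive step. The obstruction is detectability: condition \eqref{eq:parent-cond} forbids an isolated active coefficient at scale $j_1$, requiring each to be dominated by a parent at some scale in $[tj_1,j_1)$, proportionately large; efficiency dictates taking these parents dense at scale $\asymp tj_1$, and they, with their ancestors down to scale $\asymp j_0$, form a fixed \emph{scaffold}, common to all $f_\eta$, which must itself lie in $B^r_{p,\infty}(M) \cap D^s_t(M,I)$. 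Balancing the Besov budget at scale $j_1$, carrying the active coefficients, against the Besov budget at scale $\asymp tj_1$, where the scaffold is densest, simultaneously pins down the number of active coefficients and their common amplitude: the largest admissible $\delta\,2^{j_1/2}$ is, up to constants, $2^{-j_1 r'}$ when $pr'\ge1$ and $2^{-j_1 s'}$ when $pr'\le1$, which is where the exponents \eqref{eq:rs-prime-defn} come from. One then checks, scale by scale, that $f_\eta \in \mathcal F$ for every $\eta$.

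With the family in place the testing argument is routine but must respect sequential designs. Writing $\P^{(n)}_\eta$ for the law of $(x_i,Y_i)_{i\le n}$ when $f = f_\eta$, the chain rule for Kullback--Leibler divergence gives
\[
  \mathrm{KL}\bigl(\P^{(n)}_\eta \,\big\|\, \P^{(n)}_{\eta'}\bigr)
  = \frac{1}{2\sigma^2}\,\E_\eta\!\left[\sum_{i=1}^n \bigl(f_\eta(x_i) - f_{\eta'}(x_i)\bigr)^2\right].
\]
As $f_\eta - f_{\eta'}$ lives on the essentially disjoint sets $S_{j_1,k}$, with each term there $\lesssim 2^{-j_1\theta}$, the right-hand side is $\lesssim \sigma^{-2}\,2^{-2j_1\theta}\,\E_\eta[N_k]$ in the differing coordinate $k$, where $N_k$ counts design points in $S_{j_1,k}$, and $\sum_k \E_\eta[N_k] \le n$ however the algorithm allocates. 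Feeding this into Assouad's lemma --- applied window by window so as to respect the sparsity of $\mathcal H$, and using that, since the active locations are unknown, the algorithm must effectively resolve of order $2^{j_1}$ coefficients over $I$ --- shows that some coefficient is mis-estimated with probability bounded away from $0$ whenever $n\,2^{-2j_1\theta} \lesssim \sigma^2\,2^{j_1}$, i.e.\ $2^{j_1(2\theta+1)} \gtrsim n$. On that event $\sup_{x\in I}\abs{\hat f_n(x)-f(x)} \gtrsim 2^{-j_1\theta}$, so taking $2^{j_1}$ at the threshold $2^{j_1(2\theta+1)} \asymp n$ yields $c_n \gtrsim 2^{-j_1\theta} \asymp n^{-\alpha(\theta)} = n^{-\alpha(\min(r',s'))}$ --- matching \autoref{thm:alg-rates} up to the logarithmic factors there.

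The hard part is the construction of the second paragraph: arranging the scaffold so that the competing demands of Besov sparsity and detectability are both saturated exactly at the exponents $r'$ and $s'$, and verifying class membership throughout. The information inequality and the final optimization are then essentially mechanical; the delicacy is in the two-case split and the scale-by-scale bookkeeping of Besov norms across the scaffold.
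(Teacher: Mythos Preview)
Your proposal takes essentially the same route as the paper: a two-case split for the $r'$ and $s'$ bounds, a scaffold of parent wavelets at geometrically decreasing scales $j_{i-1} = \lfloor t j_i \rfloor$ down to level $j_0$ to enforce detectability, and Assouad's lemma combined with the sequential chain-rule KL bound. The one step you leave vague---``window by window so as to respect the sparsity of $\mathcal H$''---is where the paper's mechanism differs slightly: rather than partitioning, it places a sparse $\Bin(k_n,\,l_n/2k_n)$ prior on the active set so that ordinary Assouad runs over the full hypercube while the prior concentrates on $\{|K|\le l_n\}$ (this is packaged as \autoref{lem:adaptive-bounds}, with \autoref{lem:bin-exp} controlling the tail); in the $s'$ case the paper also parametrizes the amplitude as $\varepsilon_n\,2^{-j(s+1/2)}$ with an auxiliary $\varepsilon_n\to 0$ and a different choice of $2^j$, rather than writing it directly as $2^{-j_1 s'}$, but the resulting rate is the same.
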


Furthermore, we also have that, even in the absence of sparsity or
detectability, we still achieve the spatial adaptation properties of a
fixed design. We may thus use our adaptive-sensing algorithm with the
confidence that, even if \(f\) is spatially homogeneous, we will not
pay an asymptotic penalty.

\begin{theorem}
  \label{thm:alg-holder-rates}
  \autoref{alg:sas} satisfies
  \[\sup_{x \in J}\abs{\hat f_n(x) - f(x)} = O_p\left(c_n\right),\]
  uniformly over \(f \in C^s(M, I) \cap C^{\frac12}(M),\) for any \(s
  \in [\frac12, N),\) \(M > 0,\) \(I\) an interval open in \([0, 1],\)
  \(J \subseteq I\) a closed interval, and
  \[c_n \coloneqq \left(n/\log(n)\right)^{-\alpha(s)}.\]
\end{theorem}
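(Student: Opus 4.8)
The plan is to show that, even when \autoref{alg:sas} is applied to a merely locally-H\"older function, with no sparsity or detectability to exploit, the density floor \(\lambda\) forces its design to remain --- up to constants --- a regular grid of resolution \(\sim\log_2(n/\log(n))\) everywhere, so that \(\hat f_n\) can be analysed as in the uniform-design case of \autoref{thm:holder-rates}; the only change is that the algorithm may additionally place finer grids in some regions, which can only help.

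The first and main step is the geometric claim that, for \(n\) large, \(2^{i_n(j,k)}\ge cn\) for a fixed \(c>0,\) uniformly over \(j_0\le j<\jmax\) and \(0\le k<2^j.\) This rests on two facts already in the construction. First, \(p_{l,m}\ge A\lambda\) on every \(I_{l,m},\) so with \(2^{-\jmaxm}\sum_l p_{l,m}\le1\) one has \(\sum_l p_{l,m}\asymp 2^\jmaxm\asymp n/\log(n).\) Second, the greedy refinement --- repeatedly picking an \(l\) maximising \eqref{eq:max-disc} and halving it --- keeps the final discrepancy \(\max_l p_{l,m}/q_{l,m}\) bounded, the same control the proof of \autoref{thm:alg-rates} requires. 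A short water-filling estimate then shows that the common value \(p_{l,m}/n q_{l,m}\) attained is of order \(1/n,\) so \(q_{l,m}\gtrsim p_{l,m}\ge A\lambda\) on every interval; equivalently each \(I_{l,m}\) carries a regular grid at some resolution \(i_l\) with \(2^{i_l}\gtrsim\lambda n.\) Since any \(S_{j,k}\) with \(j<\jmax\) is a union of such intervals, \eqref{eq:i-defn} gives \(2^{i_n(j,k)}\ge\min\{2^{i_l}:I_{l,m}\subseteq S_{j,k}\}\gtrsim n,\) uniformly in \((j,k);\) equivalently the thresholds obey \(e_n(j,k)=\sigma 2^{-\frac12 i_n(j,k)}\sqrt{2\log(n)}\lesssim\sigma n^{-1/2}\sqrt{\log(n)}=:e^\ast,\) which is, up to a constant, the universal threshold of a uniform design on \(\sim n\) points.

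Granting this, the estimation error reduces to the standard per-coefficient analysis. By \eqref{eq:scale-approx}, for fixed \(i\) the estimate \(\hat\beta^i_{j,k}\) is an unconditionally Gaussian functional of the noise on the fixed grid \(2^{-i}\Z\cap S_{j,k},\) with mean \(\beta_{j,k}\) up to a deterministic bias negligible against \(e^\ast\) (as \(f\in C^{\frac12}(M)\)) and standard deviation \(\sigma 2^{-i/2}.\) Ranging over the \(\lesssim n\) pairs \((j,k)\) and the \(O(\log n)\) admissible values of \(i=i_n(j,k),\) a union bound over sub-Gaussian tails places us, with probability tending to one, on an event where \(|\hat\beta_{j,k}-\E\hat\beta_{j,k}|\le\kappa'e_n(j,k)\) for every \(j,k,\) with a fixed \(\kappa'\in(1,\kappa).\) On that event a short case check of the hard-threshold map gives
\[
\bigl|\hat\beta_{j,k}^T-\beta_{j,k}\bigr|\;\lesssim\;\min\bigl(|\beta_{j,k}|,\;e_n(j,k)\bigr)\;\le\;\min\bigl(|\beta_{j,k}|,\;e^\ast\bigr)
\]
for every \(j,k\): if \(\hat\beta^T_{j,k}=0\) the error is \(|\beta_{j,k}|\lesssim e_n(j,k),\) and if \(\hat\beta^T_{j,k}\ne0\) then \(|\hat\beta_{j,k}|\ge\kappa e_n(j,k)\) forces \(|\beta_{j,k}|\gtrsim e_n(j,k),\) so the residual noise is both \(\lesssim e_n(j,k)\) and \(\lesssim|\beta_{j,k}|.\) The finer local grids only push \(e_n(j,k)\) below \(e^\ast,\) so this is precisely the bound used in the uniform-design case; ``more data cannot hurt'' enters only here.

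From this point I would follow the proof of \autoref{thm:holder-rates}. Fixing \(x\in J,\) decompose \(\hat f_n(x)-f(x)\) into the tail \(\sum_{j\ge\jmax}\sum_k\beta_{j,k}\psi_{j,k}(x),\) which by \(f\in C^{\frac12}(M)\) is \(\lesssim\sum_{j\ge\jmax}2^{j/2}2^{-j}\lesssim(n/\log(n))^{-1/2},\) and so negligible since \(\alpha(s)<\frac12;\) the scaling-coefficient and approximation errors, bounded exactly as in \autoref{thm:holder-rates}; and the detail part, which on the good event is \(\lesssim\sum_{j_0\le j<\jmax}2^{j/2}\max_{k:x\in S_{j,k}}\min(|\beta_{j,k}|,e^\ast).\) As \(J\) is closed in the open interval \(I,\) there is a fixed \(j_1\) so that every \(S_{j,k}\) with \(j\ge j_1\) meeting \(J\) lies in \(I,\) giving \(|\beta_{j,k}|\le M2^{-j(s+\frac12)}\) from \(f\in C^s(M,I),\) while the finitely many levels \(j<j_1\) contribute \(\lesssim e^\ast\to0;\) splitting the sum at the level where \(M2^{-j(s+\frac12)}\sim e^\ast\) yields the geometric bound of order \((n/\log(n))^{-\alpha(s)},\) uniformly over \(C^s(M,I)\cap C^{\frac12}(M).\) Since \(\hat f_n\) is formed only at the times \(n_m,\) the bound for general \(n\) follows from the regular variation of \(c_n\) and \(n_m/n_{m-1}\asymp1.\) The real obstacle is the geometric claim of the second paragraph --- verifying that \autoref{alg:sas}'s adaptive refinement genuinely leaves a near-optimal regular grid everywhere, and not only where it concentrates mass; granting that (it is the same control already needed for \autoref{thm:alg-rates}), the remainder is the fixed-design argument together with the routine observation that lowering a correctly calibrated threshold cannot worsen a hard-threshold estimate.
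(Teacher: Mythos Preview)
Your proposal is correct and follows essentially the same route as the paper: the density floor via \autoref{lem:discrepancy-bounded} gives \(2^{i_n(j,k)}\gtrsim n\) uniformly (your ``geometric claim''), the union bound over Gaussian tails is \autoref{lem:prob-model}, the hard-threshold case split \(\lvert\hat\beta^T_{j,k}-\beta_{j,k}\rvert\lesssim\min(\lvert\beta_{j,k}\rvert,e_n(j,k))\) and the geometric sum at the balancing level are exactly the computation \eqref{eq:holder-error-bound} in the proof of \autoref{lem:algorithm-accurate}, and the use of \(J\) closed in \(I\) to get \(S_{j,k}\subseteq I\) for \(j\ge j_1\) matches the paper's ``for \(j\) large'' clause. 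The paper's own proof is a two-line pointer to these ingredients; you have simply written them out.
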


\section{Implementation and experiments}
\label{sec:practise}

We now give some implementation details of \autoref{alg:sas}, and
provide empirical results. Before we test the algorithm, we must
describe how we compute \(\hat f_n,\) and choose the parameters
governing the algorithm's behaviour.

\subsection{Estimating functions}

For simplicity, in \eqref{eq:f-hat-defn} we defined \(\hat f_n\) in
terms of wavelets only up to the resolution level \(\jmax.\) While
asymptotically this carries no penalty, in finite time we may do
better by estimating all the wavelets for which we have available
data. In other words, we use the estimate
\begin{equation}
\label{eq:new-f-hat-defn}
\hat{f}_n \coloneqq \sum_{k=0}^{2^{j_0}-1} \hat \alpha_{j_0,k}
\varphi_{j_0,k} + \sum_{j=j_0}^{\infty} \sum_{k=0}^{2^j-1}
\hat{\beta}_{j,k}^T \psi_{j,k},
\end{equation}
where for \(j \ge \jmax,\) if the set in \eqref{eq:i-defn} is empty,
we let \(i_n(j,k) \coloneqq -\infty,\) forcing \(\hat \beta_{j,k}^T =
0.\) We note that since there are finitely many design points, the sum
in \eqref{eq:new-f-hat-defn} must have finitely many non-zero terms.

To compute these estimates \(\hat f_n,\) we must convert the estimated
coefficients \(\hat \alpha_{j_0,k},\) \(\hat \beta_{j,k}^T\) back into
function values \(\hat f_n(x).\) For \(i \in \N,\ i \ge j_0,\) to
evaluate \(\hat f_n\) at points \(x = 2^{-i}k,\) \(k \in \Z,\ 0 \le k
< 2^i,\) we make the approximation
\begin{equation}
\label{eq:rev-scale-approx}
\hat f_n(2^{-i}k) \approx 2^{\frac{i}2}\hat \alpha^T_{i,k},
\end{equation}
where the post-thresholding scaling coefficients \(\hat
\alpha^T_{i,k}\) are defined by
\[\sum_{k=0}^{2^i-1}\hat \alpha^T_{i,k}\varphi_{i,k} \coloneqq \hat f_n.\]
These can again be computed efficiently using the fast wavelet
transform.

Given a uniform design, and predicting \(f\) only at the design
points, this is enough to give estimates \(\hat f_n;\) if we set
\(\kappa = 1,\) we have just described a standard hard-threshold
wavelet estimate \citep{donoho_ideal_1994}. In that case, the
observations and predictions are always made at the same scale,
\(i_n(j, k) = i,\) so the errors in \eqref{eq:scale-approx} and
\eqref{eq:rev-scale-approx} tend to cancel out. In other cases,
however, the observations and predictions may be at different scales;
these errors then may build up, making \(\hat f_n\) look like a
translation of \(f.\)

To resolve the issue, we will use a slightly different definition of
the estimated coefficients \(\hat \alpha_{j_0,k}\) and \(\hat
\beta_{j,k},\) which ensures the scales of observation and prediction
are the same. Given \(i \in \N,\ i \ge j_0,\) to estimate \(f\) at
points \(x = 2^{-i}k,\) \(k \in \Z,\ 0 \le k < 2^i,\) we set
\(x_{n,k} \coloneqq \sup\{x \in \xi_n : x \le 2^{-i}k\},\) and let
\[\hat \alpha_{i,k} \coloneqq 2^{-\frac{i}2} Y(x_{n,k}).\]%
We then define the estimates \(\hat \alpha_{j_0,k}\) and \(\hat
\beta_{j,k}\) by
\[\sum_{k=0}^{2^{j_0}-1} \hat
\alpha_{j_0,k} \varphi_{j_0,k} + \sum_{j=j_0}^{i-1}
\sum_{k=0}^{2^j-1}\hat \beta_{j,k} \psi_{j,k} \coloneqq
\sum_{k=0}^{2^i-1} \hat \alpha_{i,k} \varphi_{i,k},\] using the fast
wavelet transform as before.

We note that this definition is approximately the same as the one in
\eqref{eq:alpha-beta-hat-defn}; while it is harder to control
theoretically, it gives improved experimental behaviour. We also note
that, with a uniform design, if we wish to predict \(f\) only at the
design points, this again reduces to a standard wavelet estimate.

\subsection{Choosing parameters}
\label{sec:parameters}

To apply \autoref{alg:sas}, we must choose the parameters \(\kappa,\)
\(\lambda\) and \(\tau,\) and also estimate \(\sigma\) if it is not
already known. The parameter \(\kappa\) governs the size of our
wavelet thresholds: larger \(\kappa\) means we will be more
conservative. While our theoretical results are proved for choices
\(\kappa > 1,\) in our empirical tests we took \(\kappa = 1.\) This
gives a simple choice of threshold which performs well, and allows us
to compare our results with standard hard-threshold estimates.

The parameter \(\lambda\) controls how uniform we make our design
points: for \(\lambda \gg 0\) the design points will be mostly
uniform, while for \(\lambda \approx 0\) they will be concentrated at
irregularities of \(f.\) The parameter \(\tau\) likewise controls how
many design points we choose at each stage: for \(\tau \gg 0\) there
will be a few large stages, while for \(\tau \approx 0\) there will be
many small ones. Empirically, we found the values \(\lambda = \tau =
\frac12\) gave good trade-offs.

Finally, for uniform designs, \citet{donoho_ideal_1994} suggest
estimating \(\sigma\) by the median size of the \(\hat \beta_{j,k}\)
at fine resolution scales. Our designs may not be uniform, but they
are guaranteed to provide us with estimates \(\hat \beta_{j,k}\) up to
level \(\jmax-1.\) We will therefore use the similar estimate
\[\hat \sigma_n \coloneqq \median\{2^{\frac12i}\abs{\hat \beta_{j,k}} : j \ge
\jmax-1, i_n(j,k) > -\infty\}/0.6745,\] which includes all estimated
coefficients at scales at least this fine.

\subsection{Empirical results}
\label{sec:data}

We now describe the results of using \autoref{alg:sas} to estimate the
functions in \autoref{fig:functions}, observing under \(N(0,
\sigma^2)\) noise. \autoref{fig:noise} plots \(n = 2^{11}\) noisy
samples of each function, under a uniform design, with \(\sigma = 1,\) while
\autoref{fig:fixed} plots a standard wavelet threshold estimate from
these samples.

\begin{figure}[p]
\centering
\includegraphics[width=\textwidth,page=3]{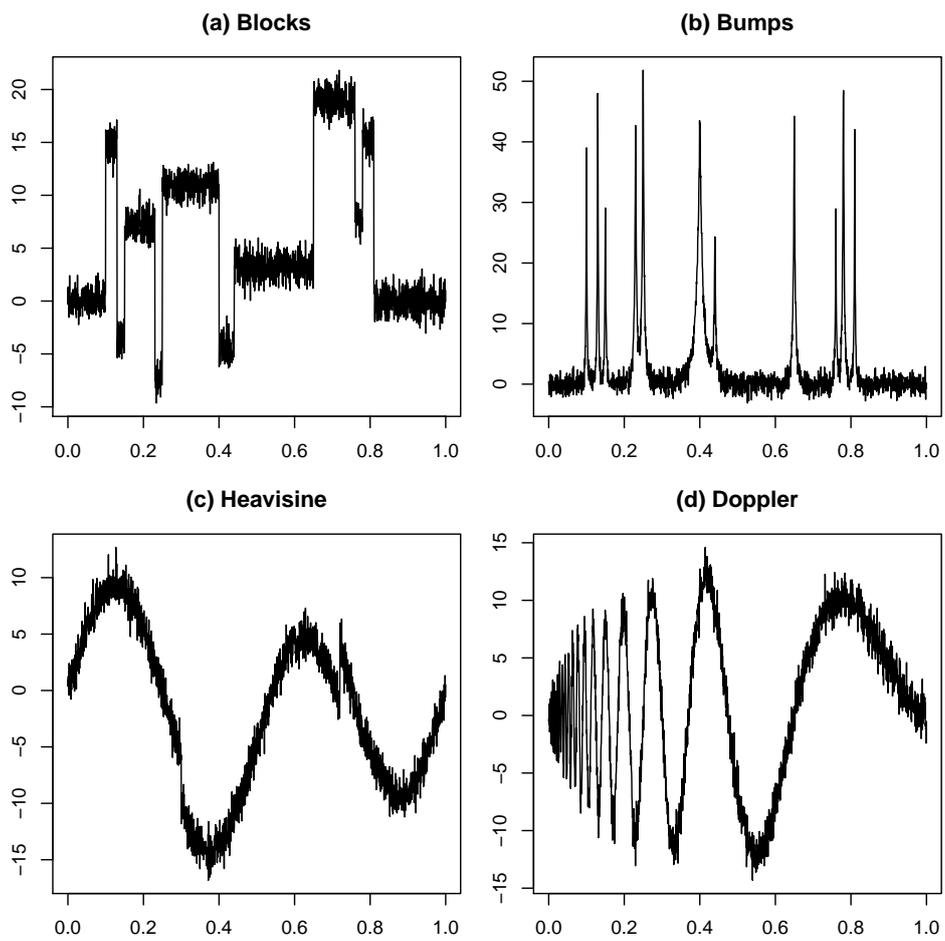}
\caption{Noisy samples from the functions in
  \autoref{fig:functions}.}
\label{fig:noise}
\end{figure}

\begin{figure}
\centering
\includegraphics[width=\textwidth,page=4]{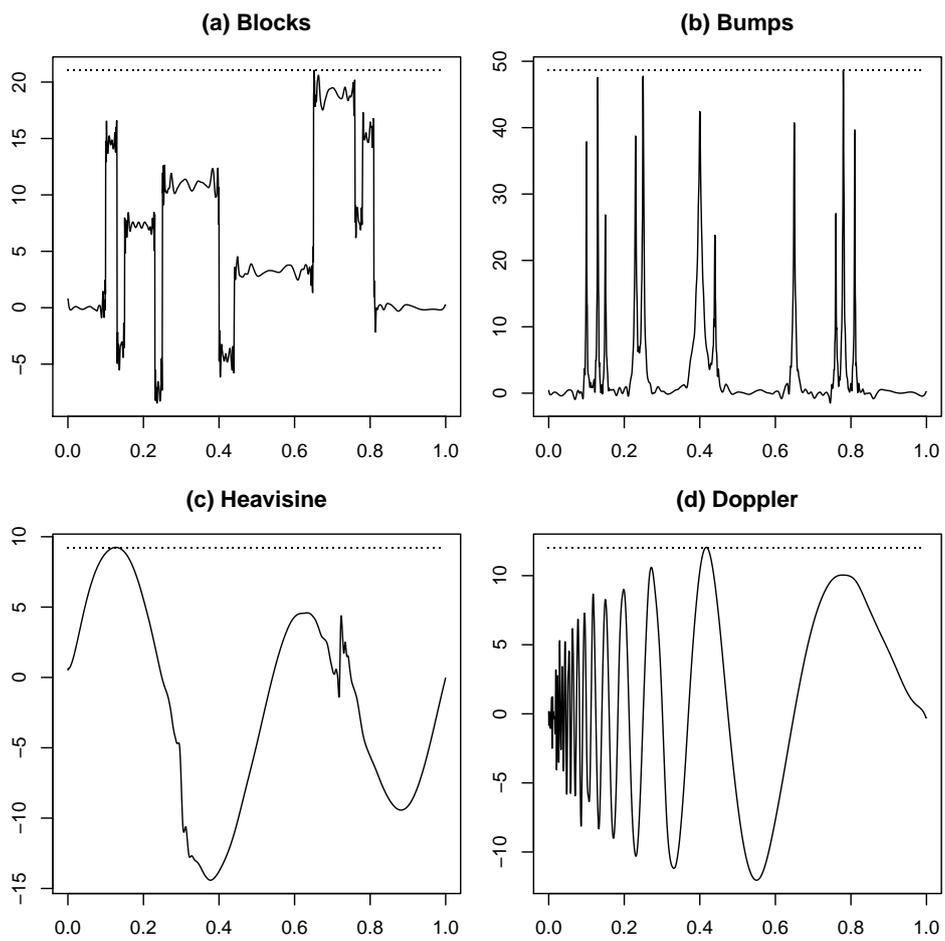}
\caption{Fixed-design estimates of the functions in
  \autoref{fig:functions}.}
\label{fig:fixed}
\end{figure}

\autoref{fig:adaptive} plots typical results of using
\autoref{alg:sas} under these conditions. The algorithm was again
given access to \(n=2^{11}\) observations, with \(\sigma = 1;\) we set
\(n_0 = 2^6,\) and chose the parameters \(\kappa,\ \lambda,\ \tau,\)
and \(\hat \sigma_n\) as in \autoref{sec:parameters}. We used the
family of wavelet bases described by \citet{cohen_wavelets_1993}, and
implemented in \citet{nason_wavethresh:_2010}; we took wavelets with
\(N = 8\) vanishing moments, set \(j_0 = 5,\) and \(\jmax=\max(j_0+1,
\lfloor n/\log(n)\rfloor).\)

\begin{figure}
\centering
\includegraphics[width=\textwidth,page=5]{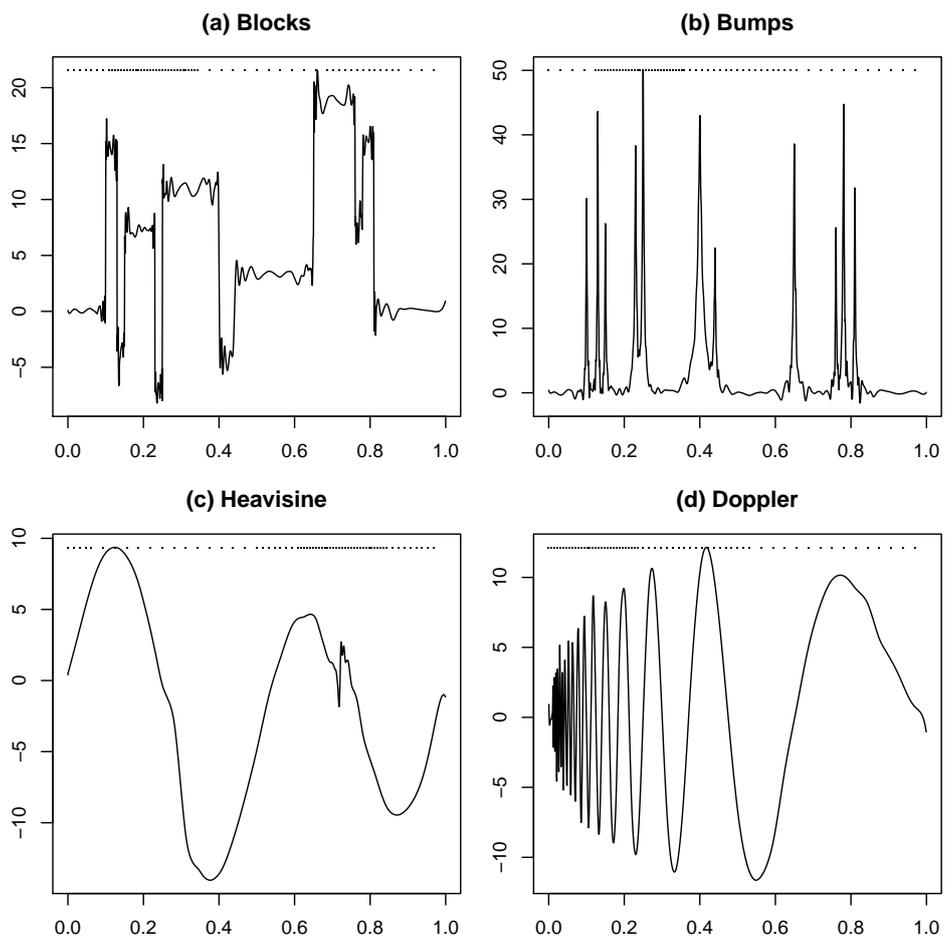}
\caption{Adaptive-sampling estimates of the functions in
  \autoref{fig:functions}.}
\label{fig:adaptive}
\end{figure}

The dots along the top of each plot are drawn proportionally to the
number of design points. We can see that, for the Heavisine and
Doppler functions, the adaptive design concentrated in the regions
where the function is rough; as a result, the adaptive estimates are
noticeably better at recovering the shape of these curves.

For the Blocks and Bumps functions, which have more complicated
patterns of spatial inhomogeneity, with these measurements the
adaptive design was not able to locate all the areas where the
functions are rough. However, we might expect performance on all the
above functions to improve as the number of design points increases;
to this end, we next considered performance with up to \(n = 2^{14}\)
design points.

At this level of detail, it becomes harder to visually compare
estimates; instead, to numerically measure the spatial adaptivity of
our estimates, we evaluated procedures in terms of their maximum error
over \([0, 1],\) approximated by
\[\max_{x \in 2^{-j}\Z \cap [0, 1)} \abs{\hat f_n(x) -
  f(x)}\] for \(j\) large. In the following, we took \(j = 17,\) to
avoid biasing the performance measure towards a uniform design.

\autoref{fig:performance} compares the performance of the two methods
on the Doppler function, with \(\sigma = 1;\) the values plotted are
sample medians after 250 runs, together with 95\% confidence intervals
for the true median. We can see that for \(n\) large, the adaptive
design significantly outperforms the uniform one, consistent with a
difference in the asymptotic rate of estimation.

\begin{figure}
\centering
\includegraphics[width=\textwidth]{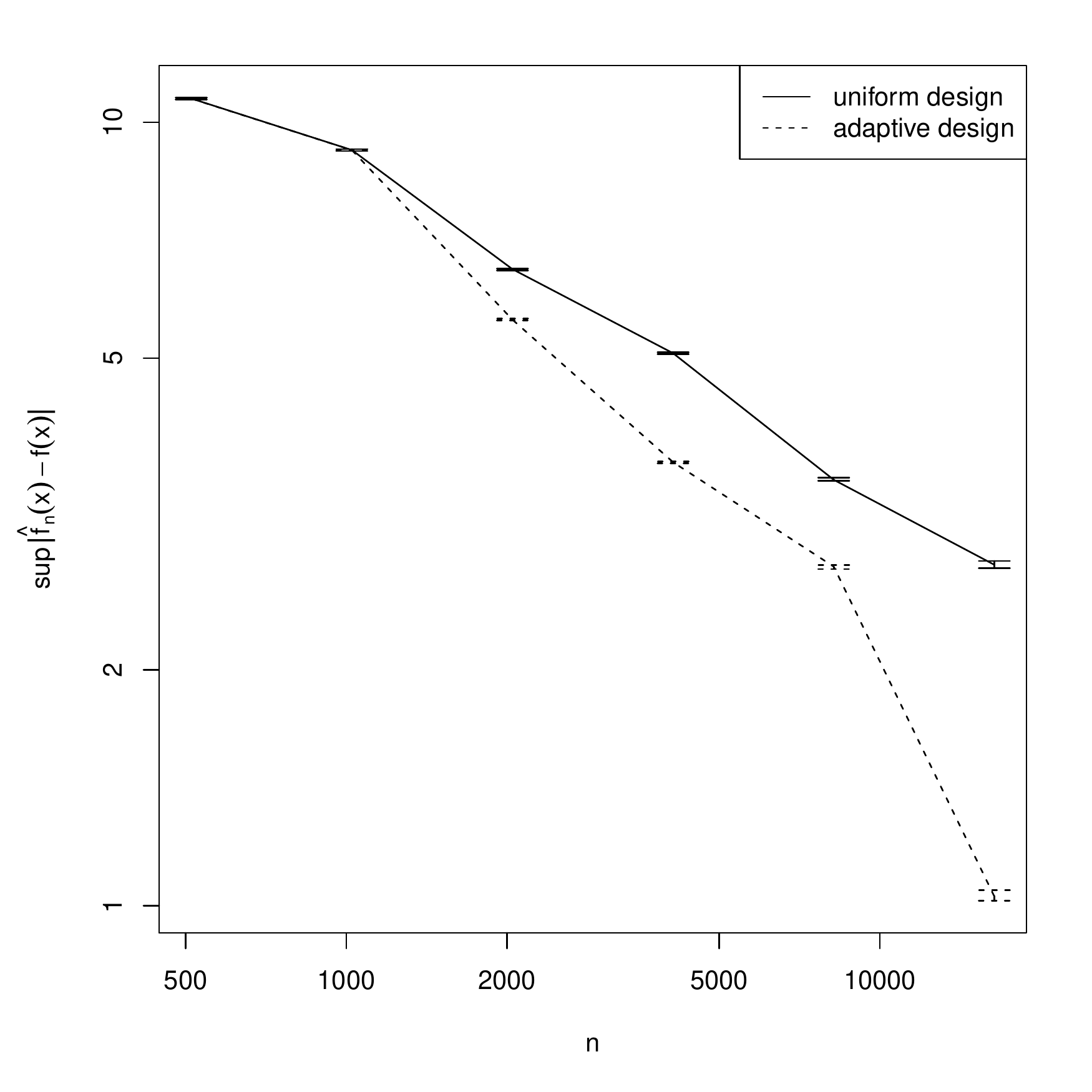}
\caption{Log-log plot of empirical performance on the Doppler function.}
\label{fig:performance}
\end{figure}

\autoref{tab:performance} compares performance on all the functions in
\autoref{fig:functions}, given \(n = 2^{14}\) observations, and
varying levels of \(\sigma.\) We again report sample medians after 250
runs, together with the \(p\)-value of a two-sided Mann-Whitney-U test
for difference in medians. (We note that the large errors reported for
the Blocks function are due to the large discontinuities present,
which are difficult to estimate uniformly over \([0, 1].\))

\begin{table}
\centering
\begin{tabular}{rrrr}
\toprule
          & uniform design & adaptive design & \(p\)-value \\
\midrule
\(\sigma = 0.5\) &&& \\
\midrule
Blocks    & {\bf 13.284} & 13.3882 & \(<0.001\)\\
Bumps     &  3.553 &  {\bf 3.0086} & \(<0.001\)\\
Heavisine &  2.646 &  {\bf 2.5902} & \(<0.001\)\\
Doppler   &  1.783 &  {\bf 0.5138} & \(<0.001\)\\
\midrule
\(\sigma = 1\) &&& \\
\midrule
Blocks    & {\bf 12.355} &  12.799 & \(<0.001\)\\
Bumps     &  5.721 &   {\bf 5.487} & \(<0.001\)\\
Heavisine &  3.260 &   {\bf 3.204} & \(<0.001\)\\
Doppler   &  2.725 &   {\bf 1.028} & \(<0.001\)\\
\midrule
\(\sigma=2\) &&& \\
\midrule
Blocks    & 11.121 &  10.988 & 0.428\\
Bumps     &  8.947 &   {\bf 7.964} & \(<0.001\)\\
Heavisine &  3.053 &   3.061 & 0.815\\
Doppler   &  3.621 &   {\bf 2.984} & \(<0.001\)\\
\bottomrule
\end{tabular}
\caption{Empirical performance on the functions in
  \autoref{fig:functions} for \(n=2^{14}.\)}
\label{tab:performance}
\end{table}

We can see that for the Blocks function, the uniform design fared
slightly better, as the adaptive algorithm still struggled to choose a
good design. However, for the other three functions, adaptive sampling
provided a significant improvement; the improvement was largest for
small \(\sigma,\) but still significant for two of the three functions
even with large \(\sigma.\) We thus conclude that adaptive sensing can
be of value in nonparametric regression whenever the function \(f\)
may be spatially inhomogeneous.

\acks{ We would like to thank Richard Nickl and several anonymous
  referees for their valuable comments and suggestions.}


\appendix

\section{Proofs}
\label{sec:proofs}

We now provide proofs of our results. We consider separately the
results describing our detectability condition; the negative results,
which establish lower bounds; and the constructive results, which
control the performance of \autoref{alg:sas}.

\subsection{Results on detectability}
\label{sec:det-proofs}

We now prove our first result, that detectability is a generic
property of locally H\"older classes.

\begin{proof}[Proof of \autoref{prop:negligible-set}]
  We will show any ball \(B_\varepsilon(f) \subseteq C^s(M, I)\)
  contains a sub-ball \(B_{\varepsilon/3}(f')\) disjoint with
  \(\mathcal D.\) Given \(\varepsilon \in (0, M),\) and \(f \in C^s(M,
  I)\) having wavelet coefficients \(\alpha_{j_0,k}\) and \(\beta_{j,
    k},\) define a function \(f'\) with wavelet coefficients
  \(\alpha_{j_0,k}\) and \(\beta_{j,k}'\), where
    \[\beta_{j,k}' \coloneqq g\left(\beta_{j,k},\, \tfrac23\varepsilon 
      2^{-j\left(s+\frac12\right)}\right),\] for
    \[g(\beta, x) \coloneqq
    \begin{cases}
      x, & \beta \in [0, x],\\
      -x, & \beta \in [-x, 0),\\
      \beta, & \text{otherwise.}
    \end{cases}\]%
    We then have \(\norm{f-f'}_{C^s(I)} \le \tfrac23 \varepsilon,\) so
    \( B_{\varepsilon/3}(f') \subset B_\varepsilon(f).\) Furthermore,
    every \(\abs{\beta_{j,k}'} \ge \tfrac23 \varepsilon
    2^{-j\left(s+\frac12\right)},\) so for any function \(f'' \in
    B_{\varepsilon/3}(f'),\) having wavelet coefficients
    \(\alpha_{j_0,k}''\) and \(\beta_{j,k}'',\) every
    \(\abs{\beta_{j,k}''} \ge \tfrac13 \varepsilon
    2^{-j\left(s+\frac12\right)}.\) Hence, for \(t = \varepsilon/3M,\)
    \begin{multline*}
      \forall\ j \in \N,\ j \ge \lceil \tfrac{j_0}t \rceil,\ k : S_{j,k} \cap I \ne \emptyset,\ j' =
      \lfloor tj \rfloor,\ k',\\
      \abs{\beta_{j',k'}''} \ge t M 2^{-j'\left(s+\frac12\right)} \ge
      (j'/j) 2^{(j-j')\left(s+\frac12\right)}\abs{\beta_{j, k}''},
    \end{multline*}%
    giving \(f'' \in D^s_t(M, I).\)
\end{proof}

\subsection{Negative results}
\label{sec:neg-proofs}

We begin our negative results by showing that, under a uniform design,
restricting to sparse detectable functions does not alter the minimax
rate of estimation. We will require \hyperref[lem:fano]{Fano's lemma},
which relates the probability of misclassifying a signal to the
Kullback-Leibler divergence between the alternatives
\citep[\S2.7.1]{tsybakov_introduction_2009}. Given probability
measures \(\P\) and \(\mathbb Q\) on \(\R^d,\) having densities \(p\)
and \(q\) respectively, define the Kullback-Leibler divergence from
\(\P\) to \(\mathbb Q,\)
\[D(\P\mid\mid \mathbb Q) \coloneqq \int p(x) \log\left(\frac{p(x)}{q(x)}\right)
dx.\]
\begin{lemma}[Fano's lemma]
  \label{lem:fano}
  Let \(X \in \R^d\) have distribution \(\P_i,\) for some \(i \in \N,\
  1 \le i \le k,\) and let \(\psi(X)\) be an estimate of \(i.\) Then
  \[\sup \nolimits_i \P_i(\psi(X) \ne i) \ge 1 -
  \frac{\beta + \log(2)}{\log(k-1)},\] where \[\beta \coloneqq
  \frac1{k^2}\sum_{i,j=1}^k D(\P_i \mid \mid \P_j).\]
\end{lemma}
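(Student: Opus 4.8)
The plan is to obtain this as the averaged form of Fano's inequality, via the textbook information-theoretic argument (cf.\ \citet[\S2.7]{tsybakov_introduction_2009}). First I would reduce to the average error probability: since \(\sup_i \P_i(\psi(X)\ne i) \ge \frac1k\sum_{i=1}^k \P_i(\psi(X)\ne i) \eqqcolon \bar p,\) it suffices to prove \(\bar p \ge 1 - (\beta+\log 2)/\log(k-1).\) To set up the argument I would introduce an auxiliary index \(J\) uniform on \(\{1,\dots,k\},\) and suppose that conditionally on \(J=i\) the observation \(X\) has law \(\P_i;\) then \(\bar p = \P(\psi(X)\ne J)\) and \(J\to X\to\psi(X)\) is a Markov chain.

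Next I would apply Fano's inequality to this chain. Writing \(H\) for Shannon entropy and \(h(p) \coloneqq -p\log p-(1-p)\log(1-p)\) for the binary entropy, one has \(H(J\mid\psi(X))\le h(\bar p)+\bar p\log(k-1),\) since conditionally on \(\psi(X)\) the variable \(J\) equals the guessed value with probability \(1-\bar p\) and is otherwise supported on the remaining \(k-1\) values; bounding \(h(\bar p)\le\log 2\) gives \(H(J\mid\psi(X))\le\log 2+\bar p\log(k-1).\) For the matching lower bound, \(J\) uniform gives \(H(J\mid\psi(X))=\log k-I(J;\psi(X)),\) and the data-processing inequality for the Markov chain yields \(I(J;\psi(X))\le I(J;X).\) With \(\bar\P\coloneqq\frac1k\sum_j\P_j\) the mixture, \(I(J;X)=\frac1k\sum_i D(\P_i\mid\mid\bar\P),\) and convexity of \(Q\mapsto D(\P_i\mid\mid Q)\) bounds this by \(\frac1{k^2}\sum_{i,j}D(\P_i\mid\mid\P_j)=\beta,\) so \(H(J\mid\psi(X))\ge\log k-\beta.\)

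Combining the two estimates gives \(\bar p\log(k-1)\ge\log k-\beta-\log 2\ge\log(k-1)-\beta-\log 2,\) hence \(\bar p\ge 1-(\beta+\log 2)/\log(k-1),\) which together with \(\sup_i\P_i(\psi(X)\ne i)\ge\bar p\) proves the lemma. There is no serious obstacle here, as the result is classical: the only ingredient that is not pure convexity or the data-processing inequality is Fano's inequality itself, which if a self-contained treatment is wanted I would derive in a line from the entropy chain rule applied to the error indicator \(E=\mathbf 1\{\psi(X)\ne J\}\), and otherwise simply cite. The one point to watch is that the mixture/convexity step is exactly what converts the pairwise divergences \(D(\P_i\mid\mid\P_j)\) into the symmetric average \(\beta\) appearing in the statement.
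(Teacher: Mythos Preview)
Your argument is correct and follows the standard information-theoretic route to the averaged form of Fano's inequality, exactly as in the reference the paper cites. Note, however, that the paper does not actually prove this lemma: it is stated with a citation to \citet[\S2.7.1]{tsybakov_introduction_2009} and used as a black box in the proof of \autoref{thm:fixed-lower-bound}, so there is no proof in the paper to compare against---your write-up simply supplies what the paper leaves to the reference.
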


We also make the definition \[\norm{f}_{I, \infty} \coloneqq \sup_{x
  \in I} \abs{f(x)},\] for functions \(f : [0, 1] \to \R,\) and \(I
\subseteq [0, 1].\)

\begin{proof}[Proof of \autoref{thm:fixed-lower-bound}]
  The argument proceeds as a standard minimax lower bound; we
  construct functions \(f_{n,k} \in \mathcal F\) a distance \(c_n\)
  apart, and show we can only distinguish between them when \(c_n
  \gtrsim (n/\log(n))^{-\alpha(s)}.\)

  Choose \(j \in \N\) so that
  \[2^{j} \sim (n/\log(n))^{1/(2s+1)},\] and define a sequence \(j_1,
  \dots, j_m\) by \[j_m \coloneqq j,\qquad j_{i-1} \coloneqq \lfloor t
  j_i \rfloor,\qquad j_0 \le j_1 < \lceil \tfrac{j_0}t \rceil.\] Since
  there are \(n\) design points \(x_i,\) for \(n\) large we must have
  an interval \(S_{j_{m-1},k} \subseteq I\) containing \(\lesssim
  2^{-j_{m-1}}n\) of them; we will assume without loss of generality
  that this interval is always \(S_{j_{m-1},0}.\) We then consider
  functions \(f_{n,k} \in \mathcal F,\) given by
  \[f_{n,k} \coloneqq \sum_{i=1}^{m-1}
  M2^{-j_i\left(s+\frac12\right)}\psi_{j_i,0} +
  C2^{-j_m\left(s+\frac12\right)}\psi_{j_m,k},\]%
  where \(k \in \Z,\ 0 \le k < 2^{j_m-j_{m-1}},\) and \(C \in (0,
  M]\) is a constant to be determined.

  Suppose \(\norm{\hat f_n - f}_{I,\infty} = O_p(c_n),\) uniformly
  over \(\mathcal F,\) for a sequence \(c_n \not \gtrsim
  (n/\log(n))^{-\alpha(s)}.\) Then on a subsequence, we have \(c_n =
  o\left((n/\log(n))^{-\alpha(s)}\right);\) passing to the subsequence, we
  may assume this is true for all \(n.\) Since the \(\psi_{j_m,k}\)
  have distinct supports, for \(n\) large we have
  \[\min_{k \ne k'} \norm{f_{n,k} - f_{n,k'}}_{I,\infty} \gtrsim
  C2^{-j_ms} \gtrsim C(n/\log(n))^{-\alpha(s)}.\] Thus for \(n\) large,
  \(\hat f_n\) can distinguish between the \(f_{n,k}\) with
  arbitrarily high probability. Let \(\P_k\) denote the distribution
  of the observations when \(f = f_{n,k};\) then any estimate
  \[\hat k_n \coloneqq \underset{k}{\arg \min} \norm{\hat f_n -
    f_{n,k}}_\infty\]
  of \(k\) satisfies
  \[\sup_k \P_k(\hat k_n \ne k) \to 0\]
  as \(n \to \infty.\)
 
  However, for \(k,\, k' \in \Z,\ 0 \le k, k' < 2^{j_m-j_{m-1}},\) the
  Kullback-Leibler divergence from \(\P_k\) to \(\P_{k'}\) is
  \begin{align*}
    D(\P_{k} \mid\mid \P_{k'}) &=
    \sum_{i=1}^n \frac1{2\sigma^2} \left(f_{n,k}(x_i) - f_{n,k'}(x_i)\right)^2 \\
    &=  \frac1{2\sigma^2} C^2 2^{-j_m(2s+1)} \sum_{i=1}^n \left(\psi_{j_m,k}(x_i) -
      \psi_{j_m,k'}(x_i)\right)^2,\\
    &\lesssim
    C^22^{-j_m(2s+1)}\sum_{i=1}^n2^{j_m}\left(1(x_i
      \in S_{j_m,k}) + 1(x_i \in S_{j_m,k'})\right).
  \end{align*}
  Thus as \(\lesssim 2^{-j_{m-1}}n\) design points lie within
  \(\bigcup_k S_{j_m,k} \subseteq S_{j_{m-1},0},\)
  \begin{align*}
    2^{-2(j_m-j_{m-1})}\sum_{k,k'}D(\P_{k} \mid\mid \P_{k'})
    &\lesssim
    C^22^{j_{m-1}-j_m(2s+1)}\sum_{i=1}^n\sum_k1(x_i
      \in S_{j_m,k})\\
    &\lesssim C^22^{-j_m(2s+1)}n \lesssim
    C^2\log(n).
  \end{align*}
  As there are \(2^{j_m-j_{m-1}}\) alternatives for \(k,\) and \(j_m-j_{m-1}
  \lesssim \log(n),\) when \(C\) is small this contradicts
  \hyperref[lem:fano]{Fano's lemma}.
\end{proof}

We next provide similar lower bounds for adaptive-sensing algorithms.
In this case, the argument from \hyperref[lem:fano]{Fano's lemma}
presents difficulties; instead, we will argue using
\hyperref[lem:assouad]{Assouad's lemma}, which bounds the accuracy of
estimation over a cube in terms of Kullback-Leibler divergences
\citep[\S2.7.2]{tsybakov_introduction_2009}. While this choice
leads to the loss of a log factor in the results proved, it allows us
to give bounds which apply also for adaptive sensing.

\begin{lemma}[Assouad's lemma]
  \label{lem:assouad}
  Let \(\Omega \coloneqq \{0, 1\}^m,\) and for \(p \in (0, \tfrac12],\)
  define a distribution \(\pi\) over \(\Omega,\)
  \[\pi(\omega) = p^{\sum_i \omega_i}(1-p)^{\sum_i (1-\omega_i)}.\]
  For each \(\omega \in \{0, 1\}^m,\) let \(\P_\omega\) be a
  probability measure on \(\R^d,\) and \(\E_\omega\) the corresponding
  expectation. Then for any estimator \(\hat \omega\) of \(\omega,\)
  \[\sum_{\omega \in \Omega} \pi(\omega) \E_\omega \rho(\hat \omega, \omega) \ge
  pm\left(1 - \sqrt{ \frac1{2m}\sum_{i=1}^{m}
      \sum_{\omega \in \Omega}\pi(\omega)D(\P_\omega \mid\mid
      \P_{\omega^{i}})}\right),\]
  where \(\rho(\omega, \omega')\) is the Hamming distance, and
  \(\omega^{i}\) equals \(\omega\) except in the \(i\)th coordinate.
\end{lemma}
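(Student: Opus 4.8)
The plan is to run the standard Assouad argument, paying a little extra attention to the biased prior \(\pi.\) First I would use that the Hamming distance splits coordinatewise, \(\rho(\hat\omega,\omega)=\sum_{i=1}^m 1(\hat\omega_i\ne\omega_i),\) so that \(\sum_{\omega}\pi(\omega)\E_\omega\rho(\hat\omega,\omega)=\sum_{i=1}^m\sum_\omega\pi(\omega)\P_\omega(\hat\omega_i\ne\omega_i)\) and it is enough to bound each inner sum below, then add over \(i.\) Fixing \(i,\) I would write \(\mu_\nu\) for the marginal of \(\pi\) on the coordinates other than \(i\) and group the \(2^m\) configurations into the pairs \(\{\nu,\nu^i\}\) obtained by flipping coordinate \(i,\) so that \(\pi(\nu)=(1-p)\mu_\nu,\) \(\pi(\nu^i)=p\mu_\nu,\) and \(\sum_{\nu:\nu_i=0}\mu_\nu=1.\) Within each pair we face the two-point problem of distinguishing \(\P_\nu\) from \(\P_{\nu^i}\) under these prior weights; taking densities \(q_\nu,q_{\nu^i}\) with respect to a common dominating measure and using \(\min\{(1-p)q_\nu,\,pq_{\nu^i}\}\ge p\min\{q_\nu,q_{\nu^i}\}\) (valid since \(p\le\tfrac12\)), the Bayes error of the pair's optimal test is at least \(p\mu_\nu(1-\norm{\P_\nu-\P_{\nu^i}}_{\mathrm{TV}}).\) As no estimator \(\hat\omega_i\) can beat these pairwise Bayes tests, summing over pairs gives \(\sum_\omega\pi(\omega)\P_\omega(\hat\omega_i\ne\omega_i)\ge p(1-\sum_{\nu:\nu_i=0}\mu_\nu\norm{\P_\nu-\P_{\nu^i}}_{\mathrm{TV}}).\)

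It then remains to aggregate the discrepancies \(a_i\coloneqq\sum_{\nu:\nu_i=0}\mu_\nu\norm{\P_\nu-\P_{\nu^i}}_{\mathrm{TV}}.\) By Cauchy--Schwarz over the \(m\) coordinates, then Jensen's inequality over the pairs (the \(\mu_\nu\) summing to one), I get \(\sum_{i=1}^m a_i\le\sqrt{m\sum_{i=1}^m\sum_{\nu:\nu_i=0}\mu_\nu\norm{\P_\nu-\P_{\nu^i}}_{\mathrm{TV}}^2}.\) Here I would exploit the \emph{symmetry} of total variation: Pinsker's inequality bounds \(\norm{\P_\nu-\P_{\nu^i}}_{\mathrm{TV}}^2\) both by \(\tfrac12 D(\P_\nu\mid\mid\P_{\nu^i})\) and by \(\tfrac12 D(\P_{\nu^i}\mid\mid\P_\nu),\) hence by \(\tfrac12\) times their convex combination with weights \(1-p\) and \(p,\) so that \(\mu_\nu\norm{\P_\nu-\P_{\nu^i}}_{\mathrm{TV}}^2\le\tfrac12(\pi(\nu)D(\P_\nu\mid\mid\P_{\nu^i})+\pi(\nu^i)D(\P_{\nu^i}\mid\mid\P_\nu));\) summing this over a pair reproduces exactly the two matching terms of \(\sum_\omega\pi(\omega)D(\P_\omega\mid\mid\P_{\omega^i}).\) Combining, \(\sum_{i=1}^m a_i\le m\sqrt{\tfrac1{2m}\sum_{i=1}^m\sum_\omega\pi(\omega)D(\P_\omega\mid\mid\P_{\omega^i})},\) and substituting into the per-coordinate bound and summing over \(i\) yields the claim.

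Assouad's lemma is entirely standard, so I do not expect a genuine obstacle. The one point requiring attention --- and the only place the argument departs from the textbook uniform-prior case --- is the bookkeeping for the biased prior: one must notice that the effective weight in each two-point subproblem is \(\min(1-p,p)=p\) rather than \(\tfrac12,\) and that the right-hand side comes out in terms of \(\sum_\omega\pi(\omega)D(\P_\omega\mid\mid\P_{\omega^i})\) with the stated constant only because total variation is symmetric, which lets its square be split between the two Kullback--Leibler directions with weights \(1-p\) and \(p.\)
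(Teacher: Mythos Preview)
The paper does not prove this lemma: it is stated with a citation to \S2.7.2 of Tsybakov's \emph{Introduction to Nonparametric Estimation} and then used as a black box, so there is no ``paper's own proof'' to compare against.

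Your argument is correct and is essentially the standard proof, with the two adjustments needed for the non-uniform prior handled cleanly. The coordinatewise decomposition of the Hamming risk, the pairing \(\{\nu,\nu^i\}\), the two-point Bayes lower bound \(\int\min\{(1-p)q_\nu,\,pq_{\nu^i}\}\ge p(1-\norm{\P_\nu-\P_{\nu^i}}_{\mathrm{TV}})\), and the Cauchy--Schwarz/Jensen/Pinsker chain all go through as you describe. The one nonstandard step --- using the symmetry of total variation to write \(\norm{\P_\nu-\P_{\nu^i}}_{\mathrm{TV}}^2\le\tfrac12\bigl((1-p)D(\P_\nu\mid\mid\P_{\nu^i})+pD(\P_{\nu^i}\mid\mid\P_\nu)\bigr)\) so that the \(\mu_\nu\) factor reconstitutes exactly \(\pi(\nu)\) and \(\pi(\nu^i)\) --- is a nice touch and is what makes the constant come out as stated.
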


Our argument then proceeds as in
\citet{arias-castro_fundamental_2011}; we will start with a simple
lemma on the truncated expectation of binomial random variables.

\begin{lemma}
  \label{lem:bin-exp}
  If \(X \sim \Bin(n, p),\) then \(\E X 1(X > 2np) = o(np)\) as \(np
  \to \infty.\)
\end{lemma}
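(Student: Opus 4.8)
The plan is to control $\E X \,1(X > 2np)$ by writing it as a sum over the tail and showing the tail decays fast enough that the total, multiplied by the values of $X$, is $o(np)$. The key tool is a Chernoff bound on the upper tail of a binomial. Concretely, for $X \sim \Bin(n,p)$ and any $a > 1$, the multiplicative Chernoff bound gives $\P(X \ge anp) \le \exp(-np\, h(a))$ for a function $h(a) > 0$ that is increasing in $a$ — one may take $h(a) = a\log a - a + 1$, which satisfies $h(a) \to \infty$ as $a \to \infty$ and $h(2) > 0$. Writing $\mu := np$, the first step is to decompose
\[
\E X\, 1(X > 2\mu) = \sum_{k > 2\mu} k\, \P(X = k) \le \sum_{j \ge 1} (2^{j+1}\mu)\, \P(X \ge 2^j \mu),
\]
grouping the dyadic blocks $2^j\mu \le X < 2^{j+1}\mu$ and bounding each summand's value of $X$ by the block endpoint $2^{j+1}\mu$ and its probability by $\P(X \ge 2^j\mu)$.

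Next I would apply the Chernoff bound to each block: $\P(X \ge 2^j\mu) \le \exp(-\mu\, h(2^j))$. Since $h(2^j) = 2^j(j\log 2 - 1) + 1 \ge c\, 2^j j$ for $j \ge 1$ and a constant $c > 0$, the $j$th term of the sum is at most $2^{j+1}\mu \exp(-c\mu 2^j j)$. For $\mu \ge 1$ this is bounded by $2^{j+1}\mu\, e^{-c 2^j j}$ times $e^{-(\mu-1) c 2^j j} \le e^{-(\mu - 1)c}$ (using $2^j j \ge 1$), so the whole sum is at most $\mu\, e^{-c(\mu-1)} \sum_{j\ge1} 2^{j+1} e^{-c 2^j j}$; the series in $j$ converges to a finite constant $K$ independent of $\mu$, giving $\E X\,1(X > 2\mu) \le K\mu\, e^{-c(\mu-1)}$. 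Dividing by $\mu$, we get $\E X\, 1(X>2\mu)/\mu \le K e^{-c(\mu-1)} \to 0$ as $\mu = np \to \infty$, which is exactly the claim.

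The main obstacle, such as it is, is bookkeeping rather than anything deep: one must make sure the dyadic grouping is valid (in particular that $X \le n$, so the sum over $j$ is finite, and that bounding $X$ by $2^{j+1}\mu$ within each block does not lose too much), and one must track that the constant $c$ coming from $h(2^j) \gtrsim 2^j j$ can be taken uniform in $j$. An alternative, slicker route avoiding the dyadic decomposition is to write $\E X\,1(X > 2\mu) = \sum_{k=1}^{n} \P(X \ge k,\ X > 2\mu) \le 2\mu\,\P(X > 2\mu) + \sum_{k > 2\mu}\P(X \ge k)$ and bound $\sum_{k > 2\mu}\P(X\ge k)$ directly by a geometric-type series using the Chernoff bound; either way the exponential decay in $\mu$ swamps the polynomial factor $\mu$, delivering $o(np)$.
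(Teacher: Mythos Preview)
Your argument is correct, but it takes a heavier route than the paper. The paper exploits the elementary binomial identity
\[
k\binom{n}{k}p^k(1-p)^{n-k}=np\binom{n-1}{k-1}p^{k-1}(1-p)^{(n-1)-(k-1)},
\]
which gives \(\E X\,1(X>2np)=np\,\P(Y>2np-1)\) for \(Y\sim\Bin(n-1,p)\); a one-line Chebyshev bound then shows \(\P(Y>2np-1)\le 1/((n-1)p)\to 0\), and the lemma follows in two lines with no decomposition or exponential inequalities. Your Chernoff-plus-dyadic argument is perfectly valid and in fact yields the much stronger statement \(\E X\,1(X>2np)\le K\,np\,e^{-c(np-1)}\), and it does not rely on any distribution-specific identity, so it would transfer to other sub-exponential variables; the cost is the bookkeeping you already flagged (the uniform lower bound \(h(2^j)\ge c\,2^j j\), where the worst case is \(j=1\), and the restriction \(\mu\ge 1\) so the factor \(e^{-(\mu-1)c}\) is a genuine upper bound). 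Either route suffices here; the paper's is simply the shortest path to the \(o(np)\) statement actually needed.
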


\begin{proof}
  Considering the mass function of \(X,\) we have
  \[\E X 1(X > 2np) = np \P (Y > 2np - 1),\]
  where \(Y \sim \Bin(n-1, p).\) From Cheybshev's inequality, we then obtain
  \[\P(Y > 2np - 1) \le \frac1{(n-1)p} \to 0. \qedhere\]
\end{proof}

We next give a lemma which allows us to control the performance of
adaptive-sensing algorithms.

\begin{lemma}
  \label{lem:adaptive-bounds}
  Given sequences \(j_n, k_n, l_n \in \N,\) with \(2^{j_n} \ge k_n \ge
  l_n \to \infty,\) \(l_n = o(k_n),\) let \(K_n \coloneqq \{0, \dots,
  k_n-1\},\) and \(\mathcal K_n \coloneqq \left\{ K \subseteq K_n :
    \abs{K} \le l_n\right\}.\) Given also functions \(f_n \in
  L^2([0,1]),\) and a sequence \(\mu_n\) satisfying \(0 \le \mu_n \le
  C\sqrt{k_n/n}\) for \(C\) small, define
  \[f_{n,K} \coloneqq f_n + \sum_{k \in K}\mu_n2^{-\frac12
    j_n}\psi_{j_n,k},\qquad K \subseteq K_n.\] Finally, let \(I
  \subseteq [0, 1]\) be an interval satisfying \(\cup_{k=0}^{k_n-1}
  S_{j_n,k} \subseteq I,\) for \(n\) large.

  Suppose that an adaptive-sensing algorithm with estimator \(\hat
  f_n\) satisfies
  \[\norm{\hat f_n - f_{n,K}}_{I,\infty} = O_p(c_n),\]
  uniformly over \(K \in \mathcal K_n.\) Then
  \[c_n \gtrsim \mu_n.\]
\end{lemma}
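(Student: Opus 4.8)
The plan is to recast the lemma as a multiple-testing problem and apply \hyperref[lem:assouad]{Assouad's lemma}. Identify a set $K \subseteq K_n$ with its indicator vector $\omega \in \{0,1\}^{k_n}$, write $\P_\omega$ for the law of the observations $(Y_1,\dots,Y_n)$ under $f = f_{n,K}$ together with the algorithm's (possibly adaptive) design, and take for $\pi$ the product Bernoulli prior of \autoref{lem:assouad} with parameter $p_n$ chosen so that $p_n k_n = l_n/2$; since $k_n \ge l_n$ and $l_n = o(k_n)$, this is legitimate, with $p_n \in (0, \tfrac12]$ and $p_n k_n \to \infty$. A typical draw $\omega \sim \pi$ has Hamming weight $\abs\omega \sim l_n/2$, so $f_{n,\omega}$ lies in the family $\{f_{n,K} : K \in \mathcal K_n\}$ covered by the hypothesis; the atypical draws with $\abs\omega > l_n$ will be absorbed at the end using \autoref{lem:bin-exp}. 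Suppose, for contradiction, that $c_n/\mu_n \to 0$ along a subsequence, and restrict attention to it.

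First I would recover $\omega$ from $\hat f_n$. By orthonormality of the wavelet basis, $\langle f_{n,K}, \psi_{j_n,k}\rangle = \langle f_n, \psi_{j_n,k}\rangle + \mu_n 2^{-\frac12 j_n}\, 1(k \in K)$, with no cross terms despite the overlapping supports $S_{j_n,k}$. Setting
\[\tilde\beta_k \coloneqq 2^{\frac12 j_n}\bigl(\langle \hat f_n, \psi_{j_n,k}\rangle - \langle f_n, \psi_{j_n,k}\rangle\bigr)/\mu_n,\]
and using $S_{j_n,k} \subseteq I$ together with $\abs{\langle \hat f_n - f_{n,K}, \psi_{j_n,k}\rangle} \le \norm{\hat f_n - f_{n,K}}_{I,\infty}\,\norm{\psi_{j_n,k}}_{L^1} \lesssim 2^{-\frac12 j_n}\norm{\hat f_n - f_{n,K}}_{I,\infty}$, on the event $\norm{\hat f_n - f_{n,K}}_{I,\infty} = O(c_n)$ we obtain $\abs{\tilde\beta_k - 1(k\in K)} \lesssim c_n/\mu_n$ simultaneously over $k$. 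Define $\hat\omega_k$ to be $1$ exactly when $\tilde\beta_k > \tfrac12$ and $\tilde\beta_k$ is among the $l_n$ largest of $\tilde\beta_0,\dots,\tilde\beta_{k_n-1}$; then $\abs{\hat\omega} \le l_n$ always, and for $K \in \mathcal K_n$ this event forces $\hat\omega = \omega$ once $c_n/\mu_n$ is small enough. Since the $O_p$ bound is uniform over $\mathcal K_n$, and $\rho(\hat\omega,\omega) \le \abs{\hat\omega} + \abs\omega \le 2l_n$ when $\abs\omega \le l_n$, it follows that for any fixed $\varepsilon > 0$ and $n$ large, $\sum_{\omega : \abs\omega \le l_n}\pi(\omega)\,\E_\omega \rho(\hat\omega,\omega) \le 2l_n\varepsilon$.

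In the other direction, \autoref{lem:assouad} with $m = k_n$ gives $\sum_\omega\pi(\omega)\E_\omega\rho(\hat\omega,\omega) \ge p_n k_n\bigl(1 - \sqrt{\tfrac1{2k_n}\sum_i\sum_\omega \pi(\omega)D(\P_\omega \mid\mid \P_{\omega^i})}\bigr)$. Since $f_{n,\omega}$ and $f_{n,\omega^i}$ differ only by $\pm\mu_n 2^{-\frac12 j_n}\psi_{j_n,i}$, and the design point $x_t$ is the same function of the past under both laws, the chain rule for Kullback--Leibler divergence yields $D(\P_\omega \mid\mid \P_{\omega^i}) = \tfrac1{2\sigma^2}\E_\omega\sum_{t=1}^n(f_{n,\omega}(x_t) - f_{n,\omega^i}(x_t))^2 \lesssim \mu_n^2\,\E_\omega N_i^\omega$, where $N_i^\omega \coloneqq \#\{t \le n : x_t \in S_{j_n,i}\}$ and I used $\norm{\psi_{j_n,i}}_\infty^2 \lesssim 2^{j_n}$. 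Summing over $i$ and interchanging, $\sum_i N_i^\omega = \sum_t \#\{i : x_t \in S_{j_n,i}\} \le (2L-1)n$, because each $S_{j_n,i}$ has width $2^{-j_n}(2L-1)$. Hence $\tfrac1{2k_n}\sum_i\sum_\omega\pi(\omega)D(\P_\omega\mid\mid\P_{\omega^i}) \lesssim \mu_n^2 n/k_n \le C^2\cdot\mathrm{const}$, and taking $C$ small (as permitted by the hypothesis) makes the bracket at least $\tfrac12$, so $\sum_\omega\pi(\omega)\E_\omega\rho(\hat\omega,\omega) \gtrsim p_n k_n \sim l_n$.

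It remains to reconcile the two estimates, which needs the atypical $\omega$ with $\abs\omega > l_n$ to be controlled: since $\abs{\hat\omega}\le l_n$ by construction, there $\rho(\hat\omega,\omega) \le \abs\omega + l_n < 2\abs\omega$, so the total contribution of these terms is at most $2\,\E[X\,1(X > 2p_nk_n)]$ for $X \sim \Bin(k_n,p_n)$, which is $o(l_n)$ by \autoref{lem:bin-exp}. Combining with the two displays above, $l_n \lesssim \sum_\omega\pi(\omega)\E_\omega\rho(\hat\omega,\omega) \le 2l_n\varepsilon + o(l_n)$, and choosing $\varepsilon$ small gives a contradiction; hence no such subsequence exists and $c_n \gtrsim \mu_n$. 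The crux of the argument is twofold: using Assouad rather than Fano so the divergence bound, which depends only on the total number of design points landing near the support, survives for adaptive algorithms (at the price of no logarithmic factor); and using a sparse Bernoulli prior together with \autoref{lem:bin-exp}, so the perturbations respect the constraint $\abs K \le l_n$ while the Hamming-distance lower bound remains of order $l_n$.
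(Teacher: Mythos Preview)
Your proof is correct and follows essentially the same route as the paper: a sparse Bernoulli prior with mean $l_n/2$, Assouad's lemma, a KL bound via $\sum_i N_i^\omega \lesssim n$, and \autoref{lem:bin-exp} to handle the atypical draws with $\abs\omega > l_n$. The only cosmetic difference is in how $K$ is recovered from $\hat f_n$: the paper sets $\hat K_n \coloneqq \arg\min_{K \in \mathcal K_n}\norm{\hat f_n - f_{n,K}}_{I,\infty}$ and uses the sup-norm separation $\inf_{K\ne K'}\norm{f_{n,K}-f_{n,K'}}_{I,\infty}\gtrsim\mu_n$, whereas you project onto the wavelets $\psi_{j_n,k}$ and threshold --- both devices yield $\sup_{K\in\mathcal K_n}\P_K(\hat K_n\ne K)\to 0$ along the subsequence, and the rest of the argument is identical.
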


\begin{proof}
  We have
  \[\underset{K \ne K'}{\inf_{K, K' \in \mathcal K_n}}
  \norm{f_{n,K}-f_{n,K'}}_{I,\infty} \gtrsim \mu_n,\] since for \(n\)
  large, \(f_{n,K} - f_{n,K'}\) must be given by a single wavelet on
  some interval \(2^{-j_m}[l,l+1) \subseteq I.\) Suppose \(\norm{\hat
    f_n - f_{n,K}}_{I,\infty} = O_p(c_n),\) uniformly over \(\mathcal
  K_n,\) for a sequence \(c_n \not \gtrsim \mu_n.\) On a subsequence,
  we have \(c_n = o(\mu_n);\) passing to that subsequence, we may
  assume this is true for all \(n.\) Any estimate
  \[\hat K_n \coloneqq \underset{K \in \mathcal K_n}{\arg\min} \norm{\hat f_n
    - f_{n,K}}_{I,\infty}\] of \(K\) then satisfies
  \[\sup_{K \in \mathcal K_n} \P_K(\hat K_n \ne K) \to 0\]
  as \(n \to \infty;\) we will show this contradicts
  \hyperref[lem:assouad]{Assouad's lemma}.

  Define a distribution \(\pi\) over \(K \subseteq K_n,\) letting the
  variables \(1(k \in K),\) \(k \in K_n,\) be i.i.d., so that
  \[\abs{K} \sim \Bin\left(k_n, \frac{l_n}{2k_n}\right).\]
  Denote by \(\E_\pi\) the expectation when we first draw \(K\)
  according to \(\pi,\) and then observe under \(\P_K.\)
  Since \(\abs{\hat K_n \triangle K} \le l_n + \abs{K}\)
  (where \(\triangle\) denotes symmetric difference), we have
  \begin{align*}
      \E_\pi\abs{\hat K_n \triangle K} &\le
    2\left(\E_\pi\abs{K}1(\abs{K} > l_n) + l_n\sup_{K \in
        \mathcal K_n} \P_K(\hat K_n \ne K)\right)\\
    &=o(l_n),
  \end{align*}
  using \autoref{lem:bin-exp}.

  However, for \(K \subseteq K_n,\) we also have
  \begin{align*}
    \sum_{k=0}^{k_n-1}\E_\pi D(\P_K \mid \mid \P_{K \triangle \{k\}})
    &= \sum_{k=0}^{k_n-1} \E_\pi \sum_{i=1}^n
    \frac{1}{2\sigma^2}\left(f_{n,K}(x) - f_{n,K \triangle \{k\}}(x)\right)^2\\
    &\lesssim \mu_n^2\sum_{i=1}^n\E_\pi \sum_{k=0}^{k_n-1}
    1(x_i \in S_{j_n,k})\\
    &\lesssim C^2k_n.
  \end{align*}
  Thus for \(C\) small, by \hyperref[lem:assouad]{Assouad's lemma},
  \[\E_\pi\abs{\hat K_n \triangle K} \gtrsim l_n,\]
  giving us a contradiction.
\end{proof}

We may now proceed to prove our adaptive-sensing lower bounds,
applying this lemma in several different contexts.

\begin{proof}[Proof of \autoref{thm:adaptive-lower-bound}] 
  We first prove \(c_n \gtrsim n^{-\alpha(r')}.\) Choose \(j \in \N\)
so that
  \[2^{j} \sim n^{1/(2r'+1)},\] and define a sequence \(j_1, \dots,
  j_m\) by
  \[j_m \coloneqq j,\qquad j_{i-1} \coloneqq \lfloor t j_i
  \rfloor,\qquad j_0 \le j_1 < \lceil \tfrac{j_0}t \rceil.\] We consider
  functions
  \[f_{n,K} \coloneqq \sum_{i=1}^{m-1}\sum_{k=0}^{2^{j_i}-1}
  M2^{-j_i\left(r+\frac12\right)}\psi_{j_i,k} + \sum_{k\in
    K}C2^{-j_m\left(r' + \frac12\right)}\psi_{j_m,k},\]%
  where \(K \subseteq K_n \coloneqq \{k \in \Z : 0 \le k < 2^{j_m},\ 
  S_{j_m,k} \subseteq I\},\) and \(C \in (0, M]\) is small. Let
  \[\mathcal K_n \coloneqq \left\{K \subseteq K_n : \abs{K} \le 2^{tj_m}\right\},\] so if \(K
  \in \mathcal K_n,\) then \(f_{n,K} \in \mathcal F.\) Applying
  \autoref{lem:adaptive-bounds}, we obtain
  \[c_n \gtrsim n^{-\alpha(r')}.\]

  To show \(c_n \gtrsim n^{-\alpha(s')},\) we make a similar argument,
  this time setting \[2^j \sim n^{1/(1-ptu)(2s'+1)},\] and defining
  \(j_1, \dots, j_m\) as before.  For \(n\) large enough, we must have
  an interval \(S_{j_m, k} \subseteq I;\) we will assume without loss of
  generality this interval is \(S_{j_m,0}.\) We then consider
  functions
  \[f_{n,K} \coloneqq \sum_{i=1}^{m-1}\sum_{k=0}^{2^{j_i(1-pu)}-1}
  M2^{-j_i\left(s+\frac12\right)}\psi_{j_i,k} + \sum_{k\in
    K}M\varepsilon_n2^{-j_m\left(s+\frac12\right)}\psi_{j_m,k},\]%
  for \(K \subseteq K_n \coloneqq \left\{k \in \Z : 0 \le k < 2^{j_m -
      j_{m-1}pu}\right\},\) and \(\varepsilon_n \in (0, 1]\) a
  sequence to be determined, with \(\varepsilon_n \to 0.\) Let
  \[\mathcal K_n \coloneqq \left\{K \subseteq K_n : \abs{K} \le
    \varepsilon_n^{-p}\right\},\] so again if \(K \in \mathcal K_n,\)
  \(f_{n,K} \in \mathcal F.\) For any \(\varepsilon_n\) decreasing
  slowly enough, we may apply \autoref{lem:adaptive-bounds}, obtaining
  \(c_n \gtrsim \varepsilon_n n^{-\alpha(s')};\) we must therefore
  have
  \[c_n \gtrsim n^{-\alpha(s')}.\qedhere\]
\end{proof}

\begin{proof}[Proof of \autoref{thm:holder-lower-bound}]
  This follows as a corollary of \autoref{thm:adaptive-lower-bound}.
  We apply the theorem to classes
  \[\mathcal F = B^s_{\infty, \infty}(M) \cap D^s_t(J),\]
  noting that \(\mathcal F \subseteq C^s(M) \subseteq C^s(M, I).\)
\end{proof}

\begin{proof}[Proof of \autoref{thm:besov-lower-bound}]
  This follows similarly to the second half of
  \autoref{thm:adaptive-lower-bound}. If we instead set \(2^j \sim
  n^{1/(2s+1)},\)
  \[f_{n,K} \coloneqq \sum_{k\in
    K}M\varepsilon_n2^{-j\left(s+\frac12\right)}\psi_{j,k},\] and
  \(K_n \coloneqq \{k \in \Z : 0 \le k < 2^j,\ S_{j,k}
  \subseteq I\},\) by the same argument we have \(c_n \gtrsim
  n^{-\alpha(s)}.\)
\end{proof}

\subsection{Constructive results}
\label{sec:con-proofs}

We now prove that \autoref{alg:sas} attains near-optimal rates of
convergence. Our proofs involve a series of lemmas; the first shows
that the algorithm chooses design points so that the discrepancy from
the target density \(p_m,\) to the effective density \(q_m,\) remains
bounded.

\begin{lemma}
  \label{lem:discrepancy-bounded}
  Let the design points \(x_n\) be chosen by \autoref{alg:sas}, and
  suppose \[1+2C \le \frac{n_{m}}{n_{m-1}} \le D,\] for constants \(C,
  D > 0.\) Then for \(m\) larger than a fixed constant,
  \[q_{l,m} \ge \frac{C}{D}p_{l,m} \ge \frac{C}{D}A\lambda.\]
\end{lemma}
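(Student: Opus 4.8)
The plan is to treat the two inequalities separately. The bound $p_{l,m} \ge A\lambda$ is immediate, since the maximum defining $p_{l,m}$ always includes the term $\lambda$ and $A>0$; so all the work lies in proving $q_{l,m} \ge \tfrac{C}{D}p_{l,m}$.

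For that I would first rewrite the inner \texttt{while} loop of \autoref{alg:sas} as a greedy water-filling. Writing $i_{l,m,n} := \max\{i \in \N : 2^{-i}\Z \cap I_{l,m} \subseteq \xi_n\}$, we have $n\,q_{l,m,n} = 2^{i_{l,m,n}}$, so the quantity $p_{l,m}/(n\,q_{l,m,n}) = p_{l,m}/2^{i_{l,m,n}}$ maximised at each pass does not in fact depend on $n$, and adding the next missing dyadic layer in the chosen $I_{l,m}$ increments $i_{l,m,n}$ by (at least) one — hence halves that cell's value $p_{l,m}/2^{i_{l,m,n}}$ — while leaving every other cell unchanged. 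The loop is therefore exactly: repeatedly halve the largest of the $2^\jmaxm$ numbers $p_{l,m}/2^{i_{l,m,n}}$. When the loop begins at stage $m$, after the base grid $2^{-\jmaxm}\Z \cap [0,1)$ has been inserted, the point count $n^{(0)}$ satisfies $n_{m-1} \le n^{(0)} \le \min(n_m,\, n_{m-1}+2^\jmaxm)$ and $i_{l,m,n^{(0)}} \ge \jmaxm$ for every $l$; the loop then runs, adding $n_m - n^{(0)}$ points, until $n = n_m$. The target $q_{l,m} \ge \tfrac{C}{D}p_{l,m}$ is equivalent to $\max_l p_{l,m}/2^{i_{l,m,n_m}} \le D/(Cn_m)$.

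Next I would count the budget and run the water-filling estimate. From $n_m/n_{m-1} \ge 1+2C$ we get $n_m - n_{m-1} \ge 2C n_{m-1} \ge \tfrac{2C}{D}n_m$, and since $2^\jmaxm = o(n_m - n_{m-1})$ the loop adds at least $(1-o(1))(n_m-n_{m-1}) \ge (1-o(1))\tfrac{2C}{D}n_m$ points for $m$ large. On the other hand, to drive $\max_l p_{l,m}/2^{i_{l,m,n}}$ below a level $\gamma$ the greedy need only refine each cell with $p_{l,m}/2^\jmaxm > \gamma$ up to the first dyadic level at which its value falls below $\gamma$, i.e.\ to an $i$ with $2^i < 2p_{l,m}/\gamma$; this costs at most $2^{-\jmaxm}\cdot 2p_{l,m}/\gamma$ new points in that cell, so summing over $l$ and using the normalisation $2^{-\jmaxm}\sum_l p_{l,m} \le 1$ built into the choice of $A$, the total cost to reach level $\gamma$ is at most $2/\gamma$. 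Taking $\gamma = D/(Cn_m)$, this cost is $2Cn_m/D$, which does not exceed the budget; hence the greedy reaches $\max_l p_{l,m}/2^{i_{l,m,n}} \le \gamma$ while $n < n_m$, and since the loop continues until $n = n_m$ and this maximum is non-increasing in $n$, we conclude $\max_l p_{l,m}/2^{i_{l,m,n_m}} \le D/(Cn_m)$, which is the claim.

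The delicate point is the constant bookkeeping in the last paragraph: the greedy overshoots each individual cell by up to a factor two before it stops refining it, so one must play that factor off against the surplus in the budget coming from $n_m/n_{m-1} \ge 1+2C$ together with $2^\jmaxm = o(n_m - n_{m-1})$, and absorb the residual $o(1)$ corrections into ``$m$ larger than a fixed constant''. I would also want to verify carefully the two structural facts the argument rests on: that $\sum_l 2^{i_{l,m,n}-\jmaxm} \le n$ (the dyadic grids in distinct intervals $I_{l,m}$ being disjoint subsets of $\xi_n$), which is exactly what makes the effective density integrate to at most one and underpins the ``cost $\le 2/\gamma$'' bound; and that each pass of the loop really does increment the chosen $i_{l,m,n}$ by at least one — i.e.\ that the smallest $i$ with $2^{-i}\Z \cap I_{l,m} \not\subseteq \xi_n$ is precisely one more than the current level $i_{l,m,n}$.
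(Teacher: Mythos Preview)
Your strategy is exactly the paper's: exhibit a comparison allocation that achieves \(\max_l p_{l,m}/q_{l,m} \le D/C\) within the available point budget, and then appeal to the optimality of the greedy water-filling. The structural checks you list at the end (disjointness of the dyadic grids across the \(I_{l,m}\), and that each pass increments \(i_{l,m,n}\) by exactly one) are correct and straightforward.

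The place where your argument does not close is the one you yourself flag. With \(\gamma = D/(Cn_m)\) the cost bound is \(2/\gamma = 2Cn_m/D\), while the post--base-grid budget you compute is only \((1-o(1))\cdot 2Cn_m/D\); the shortfall is a multiplicative \((1-o(1))\), so it cannot be ``absorbed into \(m\) large'' without weakening the final constant below \(C/D\). In the boundary case \(n_m/n_{m-1} = 1+2C = D\) there is no slack at all.

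The paper's fix is to parametrise with \(n_{m-1}\) rather than \(n_m/D\): set the target in each cell to \(r_{l,m} \coloneqq Cn_{m-1}2^{-\jmaxm}p_{l,m}\) grid points, and note that for \(m\) large, \(r_{l,m} \ge Cn_{m-1}2^{-\jmaxm}A\lambda \ge 1\), so the comparison allocation \emph{already contains} the base grid \(2^{-\jmaxm}\Z \cap [0,1)\). Its total size is then at most \(\sum_l 2r_{l,m} \le 2Cn_{m-1} \le n_m - n_{m-1}\), and its union with \(\xi_{n_{m-1}}\) has at most \(n_m\) points, giving a valid comparator for the greedy with the exact constant \(C/D\). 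In your accounting this is the observation that the base grid is shared between \(\xi_{n'}\) and the hypothetical refinement, recovering precisely the \(2^{\jmaxm}\) points you lost when passing from \(n_m - n_{m-1}\) to the post--base-grid budget; once you make that overlap explicit, your inequality closes.
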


\begin{proof}
  Suppose at stage \(m,\) the new design \(\xi_m\) included dyadic
  grids \(2^{-i}\Z \cap I_{l,m},\) where the indices \(i\) were chosen
  to ensure that there were at least
  \[r_{l,m} \coloneqq Cn_{m-1}2^{-\jmaxm}p_{l,m}\] such points in the
  interval \(I_{l,m},\) \(2^{i-\jmaxm} \ge r_{l,m}.\) We will show
  that for this design, the discrepancy from \(p_m\) to \(q_m\) is
  bounded, and that \autoref{alg:sas} must do at least this well,

  Since \(p_{l,m} \ge A\lambda,\) for \(m\) large we have \(r_{l,m}
  \ge 1.\) This design would thus include the points \(2^{-\jmaxm}\Z
  \cap [0, 1),\) and would require at most
  \[\sum_{l=0}^{2^\jmaxm-1} 2r_{l,m} \le 2Cn_{m-1} \le n_m-n_{m-1}\]
  additional design points; we would then have
  \[q_{l,m} \ge \frac{2^{\jmaxm}r_{l,m}}{n_m} \ge \frac{C}{D}
  p_{l,m}.\] Since \autoref{alg:sas} includes the points
  \(2^{-\jmaxm}\Z \cap [0, 1),\) and then chooses its remaining design
  points to minimise \[\max_{l=0}^{2^\jmaxm-1} p_{l,m}/q_{l,m},\] the
  same must be true for its choice of design points.  The final
  inequality follows as \(p_{l,m} \ge A\lambda.\)
\end{proof}

We next consider the operation of \autoref{alg:sas} under a
deterministic noise model. Let \(e_n\) be given by \eqref{eq:e-defn},
and suppose that our estimates \(\hat \alpha_{j_0,k},\) \(\hat
\beta_{j,k}\) are instead chosen adversarially, subject to the
conditions that
\begin{equation}
\label{eq:coeff-bounds}
\abs{\hat \alpha_{j_0,k} - \alpha_{j_0,k}} \le e_n(j_0,k), \qquad 
\abs{\hat \beta_{j,k} - \beta_{j,k}} \le e_n(j,k),
\end{equation}
for all \(k,\) and \(j_0 \le j < \jmax.\) We then show
that, in this model, the target densities \(p_m\) will be large in
regions where the wavelet coefficients \(\beta_{j,k}\) are large.

\begin{lemma}
  \label{lem:densities-good}
  In the deterministic noise model, let \(\mathcal F\) be given by
  \eqref{eq:f-defn}, for \(p < \infty.\) For any \(f \in \mathcal F,\)
  \(m, j \in \N,\) \(j_0 \le j < \jmaxmm,\) and \(k \in \Z,\ 0 \le k <
  2^j,\) suppose
  \[\abs{\beta_{j,k}} \ge (\kappa + 1)e_{n_{m-1}}(j,k).\]
  Then for \(I_{l,m} \subseteq S_{j,k},\) we have
  \[p_{l,m} \gtrsim \frac{2^{jp\left(r+\frac12\right)}\abs{\beta_{j,k}}^p}{\log(n_m)^2},\]
  uniformly in \(f,\,m,\,j,\) and \(k.\)
\end{lemma}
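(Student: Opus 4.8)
The plan is to show that the hypothesis forces the entry $2^j/(r_j(k)(\jmaxmm)^2)$ to appear in the maximum defining $p_{l,m},$ and then to bound the rank $r_j(k)$ from above using the Besov norm of $f.$ Throughout I would work at time $n_{m-1},$ since that is the time at which the thresholded estimates and the ranking functions $r_j$ entering $p_m$ are computed.

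First I would verify that $\beta_{j,k}$ survives thresholding at time $n_{m-1}.$ From \eqref{eq:coeff-bounds}, $\abs{\hat\beta_{j,k}} \ge \abs{\beta_{j,k}} - e_{n_{m-1}}(j,k),$ so the assumption $\abs{\beta_{j,k}} \ge (\kappa+1)e_{n_{m-1}}(j,k)$ gives $\abs{\hat\beta_{j,k}} \ge \kappa e_{n_{m-1}}(j,k),$ hence $\hat\beta^T_{j,k} = \hat\beta_{j,k},$ and moreover $\abs{\hat\beta^T_{j,k}} \ge \tfrac{\kappa}{\kappa+1}\abs{\beta_{j,k}} > 0.$ Since $j_0 \le j < \jmaxmm$ and $I_{l,m} \subseteq S_{j,k}$ by hypothesis, $2^j/(r_j(k)(\jmaxmm)^2)$ is then one of the terms in the set defining $p_{l,m},$ so $p_{l,m} \ge A\,2^j/(r_j(k)(\jmaxmm)^2).$

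Next I would bound $r_j(k).$ By construction $r_j(k)$ is at most the number of indices $k'$ with $\abs{\hat\beta^T_{j,k'}} \ge \abs{\hat\beta^T_{j,k}}.$ Every such $k'$ has $\hat\beta^T_{j,k'} \ne 0,$ so $\abs{\hat\beta_{j,k'}} \ge \kappa e_{n_{m-1}}(j,k'),$ and then \eqref{eq:coeff-bounds} gives $\abs{\beta_{j,k'}} \ge (1-\tfrac1\kappa)\abs{\hat\beta_{j,k'}} = (1-\tfrac1\kappa)\abs{\hat\beta^T_{j,k'}} \ge (1-\tfrac1\kappa)\abs{\hat\beta^T_{j,k}} \ge \tfrac{\kappa-1}{\kappa+1}\abs{\beta_{j,k}},$ using the lower bound from the previous step and $\kappa > 1.$ Thus each such $k'$ contributes at least $(\tfrac{\kappa-1}{\kappa+1})^p\abs{\beta_{j,k}}^p$ to $\sum_{k'}\abs{\beta_{j,k'}}^p,$ which $f \in B^r_{p,\infty}(M)$ bounds by $M^p 2^{-jp(r+\frac12-\frac1p)}.$ Dividing, $r_j(k) \lesssim 2^j 2^{-jp(r+\frac12)}/\abs{\beta_{j,k}}^p,$ with constant depending only on $\kappa,$ $M$ and $p.$

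Combining the two bounds gives $2^j/r_j(k) \gtrsim 2^{jp(r+\frac12)}\abs{\beta_{j,k}}^p,$ and hence $p_{l,m} \gtrsim 2^{jp(r+\frac12)}\abs{\beta_{j,k}}^p/(\jmaxmm)^2;$ since $2^{\jmax}\sim n/\log(n)$ we have $\jmaxmm \lesssim \log(n_{m-1}) \le \log(n_m),$ which yields the stated estimate. I expect the only delicate point is the bookkeeping in the middle two steps: one must invoke the threshold and deterministic-noise bounds both for the fixed pair $(j,k)$ and for every competing index $k'$ above it in the ranking, and check that because $\kappa > 1$ all the resulting comparison constants are strictly positive and independent of $f,$ $m,$ $j,$ $k,$ $k'.$ Everything else is routine.
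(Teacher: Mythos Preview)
Your proposal is correct and follows essentially the same route as the paper: verify that $\hat\beta_{j,k}^T\ne 0$ via the deterministic noise bound, show every $k'$ ranked above $k$ must have $\abs{\beta_{j,k'}}\gtrsim\abs{\beta_{j,k}}$, count such $k'$ using the $B^r_{p,\infty}(M)$ constraint, and feed the resulting bound on $r_j(k)$ into the definition of $p_{l,m}$. Your version is slightly more explicit about the constants $\tfrac{\kappa}{\kappa+1}$ and $\tfrac{\kappa-1}{\kappa+1}$ and about how the ranking enters $p_{l,m}$, but the argument is the same.
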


\begin{proof}
  We first establish that, for non-thresholded coefficients, our
  estimates \(\hat \beta_{j,k}^T\) are of comparable size to the
  \(\beta_{j,k}.\) For \(n \coloneqq n_{m-1},\) we have
  \[\abs{\hat \beta_{j,k}} \ge \abs{\beta_{j,k}} - e_n(j,k) \ge \kappa e_n(j,k),\]%
  so \(\hat \beta_{j,k}^T \ne 0.\)  
  Suppose \(\abs{\hat \beta_{j,k'}^T} \ge \abs{\hat \beta_{j,k}^T}\)
  for some \(k'.\) Then \(\hat \beta_{j,k'}^T \ne 0,\)
  so
  \[\abs{\beta_{j,k'}} \ge \abs{\hat
      \beta_{j,k'}} - e_n(j,k') \ge (\kappa - 1)e_n(j,k'),\]%
  and
  \[\abs{\hat \beta_{j,k'}} \le \abs{\beta_{j,k'}} + e_n(j,k') \lesssim \abs{\beta_{j,k'}}.\]
  We thus obtain that
  \begin{equation}
    \label{eq:beta-large}
    \abs{\beta_{j,k'}} \gtrsim \abs{\hat \beta_{j,k'}}
    \ge \abs{\hat \beta_{j,k}}
    \ge \abs{\beta_{j,k}} - e_n(j,k)
    \gtrsim \abs{\beta_{j,k}}.
  \end{equation}

  We may then conclude that, for such coefficients, the noise has
  little effect on the target density. Since \(f \in
  B^r_{p,\infty}(M),\) the number of coefficients \(\beta_{j,k'}\)
  satisfying \eqref{eq:beta-large} can be at most
  \[2^{-jp\left(s+\frac12-\frac1p\right)}\abs{\beta_{j,k}}^{-p},\] up
  to constants. Thus, for any \(I_{l,m} \subseteq S_{j,k},\) the
  target density
  \[p_{l,m} \gtrsim \frac{2^{jp\left(s+\frac12\right)}
    \abs{\beta_{j,k}}^p}{(\jmax)^2}.\]%
  As \(\jmax \lesssim \log(n_m),\) and these bounds are uniform over
  \(f,\,m,\,j,\) and \(k,\) the result follows.
\end{proof}

Next, we prove a technical lemma, which shows that each term in the
wavelet series of \(f\) will lie within the support of larger terms at
lower resolution levels. For a given function \(f,\) if \(j, k, j',
k'\) satisfy \eqref{eq:parent-cond}, we will say that
\(\beta_{j',k'}\) is a {\em parent} of \(\beta_{j,k}.\)

\begin{lemma}
  \label{lem:parent-wavelets}
  Let \(\mathcal F\) be given by \eqref{eq:f-defn}, and pick \(\jmin
  \in \N,\)
  \[2^{\jmin} \sim \left(n/\log(n)^{1-\frac4p}\right)^{1/\left(2tr +
      2(1-t)s+1\right)}.\] Then for any \(f \in \mathcal F,\) \(n, j \in
  \N,\) \(j_0 \le j < \jmax,\) and \(k : S_{j,k} \cap I \ne
  \emptyset,\) there is a sequence
  \(\beta_{j_1,k_1},\dots,\beta_{j_d,k_d}\) of wavelet coefficients of
  \(f,\) satisfying:
  \begin{enumerate}
  \item \(\beta_{j_i,k_i}\) is a parent of
    \(\beta_{j_{i+1},k_{i+1}};\)
  \item \(j_1 \le \jmin,\) \(j_d = j,\) \(k_d = k;\) and
  \item \(d\) is bounded by a fixed constant.
  \end{enumerate}
\end{lemma}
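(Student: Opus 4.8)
The plan is to build the sequence $\beta_{j_1,k_1},\dots,\beta_{j_d,k_d}$ backwards from $\beta_{j,k}$, repeatedly invoking the detectability condition~\eqref{eq:parent-cond}, and then to bound the number of steps by a careful accounting of how the resolution level decreases at each step. Starting from $(j_d,k_d)=(j,k)$, suppose we have constructed $\beta_{j_i,k_i}$ with $j_i>\jmin$ and $S_{j_i,k_i}\cap I\ne\emptyset$. If in addition $j_i\ge\lceil j_0/t\rceil$, then the detectability condition applied to $(j_i,k_i)$ produces $\beta_{j_{i-1},k_{i-1}}$ with $\lfloor tj_i\rfloor\le j_{i-1}<j_i$, with $S_{j_{i-1},k_{i-1}}\supset S_{j_i,k_i}$ (so its support still meets $I$), and with $\abs{\beta_{j_{i-1},k_{i-1}}}\ge(j_{i-1}/j_i)2^{(j_i-j_{i-1})(s+1/2)}\abs{\beta_{j_i,k_i}}$. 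We continue this until the first index $i=1$ at which either $j_1\le\jmin$ or $j_1<\lceil j_0/t\rceil$; in the latter case we note $\jmin\gtrsim j_0/t$ for $n$ large (the exponent in the definition of $\jmin$ is positive), so this case is subsumed in $j_1\le\jmin$ anyway. Property~(i) and~(ii) are then immediate from the construction.

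The only real work is property~(iii): bounding $d$. There are two regimes to control. First, the number of steps needed to descend from level $j$ down to level $\jmin$: since each step replaces $j_i$ by some $j_{i-1}\ge\lfloor tj_i\rfloor\ge tj_i-1$, the levels shrink geometrically, so it takes $O(\log(j/\jmin))$ steps to reach a level at most $\jmin$ — and since both $j\le\jmax\lesssim\log n$ and $\jmin\gtrsim 1$, this is $O(\log\log n)$. That is bounded, but only by a slowly growing quantity, not a fixed constant. To get a genuinely fixed constant, one observes that we may \emph{prune} the sequence: the key point is that the $\beta_{j_i,k_i}$ are growing in size as $i$ decreases, and $f\in B^r_{p,\infty}(M)$ forces an upper bound on how large a coefficient at a given level can be. Concretely, combining the chain of inequalities from~\eqref{eq:parent-cond} telescopes to give $\abs{\beta_{j_1,k_1}}\gtrsim (j_1/j)\,2^{(j-j_1)(s+1/2)}\abs{\beta_{j,k}}$, while the Besov membership gives $\abs{\beta_{j_1,k_1}}\le M2^{-j_1(r-1/p+1/2)}$ (using $B^r_{p,\infty}(M)\subseteq C^{r-1/p}(M)$). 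Feeding in the worst case $\abs{\beta_{j,k}}\asymp 2^{-j(s+1/2)}$ (the largest it can be for $f\in C^s(M,I)$) and $j\asymp\jmax$, these two bounds are only mutually consistent when $2^{j_1}$ is at least of the order appearing in the definition of $\jmin$ — which is exactly why $\jmin$ was chosen with that precise exponent $1/(2tr+2(1-t)s+1)$. So the chain cannot in fact descend far below $\jmin$, and more importantly the geometric decay means we reach the vicinity of $\jmin$ in $O(1)$ steps once we measure depth on the correct logarithmic scale $\log_2 j$: from $\log_2 j\le\log_2\jmax$ down to $\log_2\jmin$, each step decreasing $\log_2 j_i$ by at least a fixed positive amount $-\log_2 t+o(1)$, which after the substitution $2^{\jmin}\asymp(\text{power of }n)$ and $\jmax\asymp\log n$ gives a number of steps bounded by $\bigl(\log_2\log n-\log_2\jmin\bigr)/(-\log_2 t)$, a fixed constant since both $\log\log n$ and $\log\jmin$ are $\Theta(\log\log n)$ with a ratio tending to a constant determined by $t,r,s,p$.

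I expect the bookkeeping in the previous paragraph to be the main obstacle: one has to be careful that ``$d$ bounded by a fixed constant'' really does hold uniformly in $n$, which forces the precise choice of $\jmin$ and a clean telescoping estimate rather than a naive ``geometric decay, hence $O(\log\log n)$'' argument. The cleanest route is: (a) telescope~\eqref{eq:parent-cond} along the constructed chain to get a lower bound on $\abs{\beta_{j_1,k_1}}$ in terms of $\abs{\beta_{j,k}}$, $j$ and $j_1$; (b) confront this with the Besov-induced upper bound on $\abs{\beta_{j_1,k_1}}$; (c) solve for the relation this imposes between $j_1$, $j$ and $n$, reading off that $j_1$ cannot be much smaller than $\jmin$; (d) conclude that the geometric descent $j_{i-1}\ge tj_i-1$ from $j\lesssim\jmax\asymp\log n$ reaches level $\asymp\jmin$ in a number of steps that is $O(1)$ after taking logarithms, using that $\jmin$ itself is $2^{\Theta(\log n/\log\log n\cdot\text{const})}$ so that $\log\jmax/\log\jmin=O(1)$. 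Everything else — verifying (i), (ii), the support containments, and the mild lower bound $\jmin\gtrsim j_0/t$ — is routine from the definitions.
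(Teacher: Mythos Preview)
There is a genuine gap. Your claim that ``the levels shrink geometrically'' because $j_{i-1}\ge\lfloor tj_i\rfloor$ has the inequality the wrong way round: the detectability condition~\eqref{eq:parent-cond} only guarantees $\lfloor tj_i\rfloor\le j_{i-1}<j_i$, i.e.\ a \emph{lower} bound on $j_{i-1}$. Nothing prevents $j_{i-1}=j_i-1$ at every step, which would require of order $\jmax-\jmin\asymp\log n$ steps. So the ``$O(\log(j/\jmin))$ steps by geometric decay'' claim, and everything built on it, is unsupported.

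The paper closes this gap with a minimality trick: at each stage choose the parent $\beta_{j',k'}$ of $\beta_{j,k}$ with $j'$ \emph{minimal}. If $j'>\jmin$, take any parent $\beta_{j'',k''}$ of $\beta_{j',k'}$. Telescoping the two parent inequalities gives $S_{j'',k''}\supset S_{j,k}$ and $\abs{\beta_{j'',k''}}\ge(j''/j)2^{(j-j'')(s+1/2)}\abs{\beta_{j,k}}$, so if $j''\ge\lfloor tj\rfloor$ then $\beta_{j'',k''}$ would itself be a parent of $\beta_{j,k}$ at a level below $j'$, contradicting minimality. Hence $j''<tj$: every \emph{two} steps the level drops by a factor of $t$. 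Since $\jmax/\jmin$ tends to the constant $2tr+2(1-t)s+1$ (both are $\Theta(\log n)$, which you never exploit cleanly), the number of steps is at most $2\log(\jmax/\jmin)/\log(1/t)=O(1)$.

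Your Besov-bound detour (a)--(c) is both unnecessary and unsound here: the lemma must hold for every $(j,k)$ with $S_{j,k}\cap I\ne\emptyset$, including when $\beta_{j,k}=0$, in which case the telescoped lower bound on $\abs{\beta_{j_1,k_1}}$ is vacuous and gives no constraint on $j_1$.
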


\begin{proof}
  If \(j \le \jmin,\) we are done. If not, since \(f \in \mathcal F,\)
  we must have a coefficient \(\beta_{j',k'}\) which is a parent of
  \(\beta_{j,k}.\) Choose \(\beta_{j', k'}\) so that \(j'\) is
  minimal, and if also \(j' > \jmin,\) let \(\beta_{j'',k''}\) be a
  parent of \(\beta_{j',k'}.\) We will show that we may continue in
  this fashion until we choose a coefficient \(\beta_{j_1,k_1}\) with
  \(j_1 \le \jmin.\)

  If \(j' > \jmin,\) we have that \(j'' < j,\) \(S_{j'',k''} \supset
  S_{j,k},\) and
  \[\abs{\beta_{j'',k''}} \ge
  (j''/j)2^{(j-j'')\left(s+\frac12\right)}\abs{\beta_{j, k}}.\] If also \(j''
  \ge tj,\) this would make \(\beta_{j'',k''}\) a parent of
  \(\beta_{j,k},\) contradicting our choice of \(j'.\) Thus \(j'' <
  tj.\) Since every two steps, we reduce \(j\) by a factor of \(t,\)
  and \(\jmax/\jmin\) tends to a constant, it takes at most a constant
  number of steps to reach \(\jmin.\)
\end{proof}

We may now show that, in this model, the algorithm will ensure all large
coefficients are estimated accurately.
\begin{lemma}
  \label{lem:coeffs-accurate}
  In the deterministic noise model, let the design points \(x_n\) be
  chosen by \autoref{alg:sas}, and let \(\mathcal F\) be given by
  \eqref{eq:f-defn}. For \(n = n_m\) and \(C > 0\) large, not
  depending on \(f,\) the following results hold for all \(j,k \in
  \Z,\ \jmin \le j < \jmax,\ 0 \le k < 2^j.\)
  \begin{enumerate}
  \item \(e_n(j,k) \le C(n/\log(n))^{-\frac12}.\)
  \item If \(f \in \mathcal F,\) \(p < \infty,\) and \(S_{j,k} \cap I
    \ne \emptyset,\) define
    \[v \coloneqq \frac{tr+(1-t)s + \frac12}{1+\frac2p},\] and
    \[\delta_n \coloneqq C\left(n/\log(n)^3\right)^{-1/(p+2)}.\]
    Then 
    \[\abs{\beta_{j,k}} \ge (\kappa + 1)2^{-jv}\delta_n \quad \implies \quad e_n(j,k) \le
    2^{-jv}\delta_n.\]
  \end{enumerate}
\end{lemma}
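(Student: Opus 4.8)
The plan rests on a reduction common to both parts. Since $e_{n_m}(j,k)^2 = 2\sigma^2\log(n_m)\,2^{-i_{n_m}(j,k)}$, it suffices to lower-bound $2^{i_{n_m}(j,k)}$; and because $j<\jmax$, the support $S_{j,k}$ is exactly a union of cells $I_{l,m}$, inside each of which the design contains a dyadic grid of resolution $\ge n_m q_{l,m}$, so the common refinement of these grids, restricted to $S_{j,k}$, lies in $\xi_{n_m}$ and gives
\[ 2^{i_{n_m}(j,k)}\ \ge\ n_m\,\min\{\,q_{l,m} : I_{l,m}\subseteq S_{j,k}\,\}. \]
Thus both parts follow from lower bounds on the $q_{l,m}$ over $S_{j,k}$, and by \autoref{lem:discrepancy-bounded} ($q_{l,m}\gtrsim p_{l,m}$) it is enough to lower-bound the target densities there. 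For (i) this is immediate: \autoref{lem:discrepancy-bounded} gives $p_{l,m}\ge A\lambda$, hence $q_{l,m}\gtrsim 1$ for every $l$ once $m$ is large, whence $2^{i_{n_m}(j,k)}\gtrsim n_m$ and $e_{n_m}(j,k)\lesssim(n_m/\log n_m)^{-1/2}$; choosing $C$ large finishes the part (the hypothesis $j\ge\jmin$ is not used here).

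For part (ii), fix $(j,k)$ with $\jmin\le j<\jmax$, $S_{j,k}\cap I\ne\emptyset$ and $\abs{\beta_{j,k}}\ge(\kappa+1)2^{-jv}\delta_n$, writing $n=n_m$. The coefficient $\beta_{j,k}$ itself may sit below the noise level at the preceding stage, so we pass to an ancestor: \autoref{lem:parent-wavelets} (applied at stage $m-1$, which differs from stage $m$ by bounded amounts) gives a chain $\beta_{j_1,k_1},\dots,\beta_{j_d,k_d}=\beta_{j,k}$ of bounded length with $j_1\le\jmin$, and multiplying the parent inequalities \eqref{eq:parent-cond} telescopically yields, for each $i$,
\[ \abs{\beta_{j_i,k_i}}\ \ge\ \tfrac{j_i}{j}\,2^{(j-j_i)\left(s+\frac12\right)}\abs{\beta_{j,k}}, \]
so each $\abs{\beta_{j_i,k_i}}$ is an exponentially boosted multiple of $\abs{\beta_{j,k}}$, the ratio $j_i/j$ being bounded below since $d$ is bounded. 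One then selects an ancestor $(j',k')$ with $j'<\jmaxmm$ and checks, using the telescoped bound and the choice of $\jmin$, that $\abs{\beta_{j',k'}}\ge(\kappa+1)e_{n_{m-1}}(j',k')$; \autoref{lem:densities-good} then applies at stage $m$ and gives, for every $I_{l,m}\subseteq S_{j,k}\subseteq S_{j',k'}$,
\[ p_{l,m}\ \gtrsim\ \frac{2^{j'p\left(r+\frac12\right)}\abs{\beta_{j',k'}}^p}{\log(n_m)^2}\ \gtrsim\ \frac{2^{p\left[j\left(s+\frac12\right)+j'(r-s)\right]}\abs{\beta_{j,k}}^p}{\log(n_m)^2}. \]

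Carrying this through the reduction above and substituting $\abs{\beta_{j,k}}\ge(\kappa+1)2^{-jv}\delta_n$ together with $\delta_n^{p+2}=C^{p+2}\log(n_m)^3/n_m$, every power of $n_m$ and of $\log n_m$ cancels and one is left with
\[ \frac{e_{n_m}(j,k)^2}{\left(2^{-jv}\delta_n\right)^2}\ \lesssim\ \frac{2^{\,p(r-s)(jt-j')}}{(\kappa+1)^p C^{p+2}}, \]
the exponent being forced by the identity $p\left[j\left(s+\tfrac12\right)+j'(r-s)\right]-v(p+2)j = p(r-s)(j'-jt)$, which depends only on the definition of $v$. For an ancestor with $j'$ on the correct side of $jt$ — just above it when $r\ge s$ (take $(j',k')=(j,k)$ when $j<\jmaxmm$, otherwise a coarser ancestor with $j'\ge t\jmaxmm\ge jt-O(1)$), and below $tj$ when $r<s$ (the latter regime being in fact largely vacuous, since $C^s(M,I)$ already caps $\abs{\beta_{j,k}}$ below the threshold once $j\ge\jmin$, by the choice of $\jmin$ and $s>r$) — this factor is bounded above, so $C$ large gives $e_{n_m}(j,k)\le 2^{-jv}\delta_n$.

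The main obstacle is the joint calibration in the last two displays: the scale $j'$ of the ancestor (and, if need be, the stage at which it is detected) must be chosen so as to simultaneously (a) lift $\abs{\beta_{j',k'}}$ above the noise floor $\asymp(n/\log n)^{-1/2}$ — which is exactly why $2^{\jmin}$ carries the exponent $2tr+2(1-t)s+1=2r'+1$ — and (b) leave the residual factor $2^{\,p(r-s)(jt-j')}$ bounded, which forces $v=(tr+(1-t)s+\tfrac12)/(1+\tfrac2p)$ and the power $\log(n)^3$ in $\delta_n$. Verifying that these two requirements are compatible for every $\jmin\le j<\jmax$, across the regimes $r\ge s$ and $r<s$, and allowing for the bounded gap between $\jmax$ and $\jmaxmm$ together with the $O(1)$ boundary cells per scale, is the substance of the argument; the rest is the bookkeeping sketched above.
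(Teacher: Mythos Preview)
Your treatment of part (i) is fine and matches the paper. The gap is in part (ii): the two requirements you isolate in the final paragraph---(a) that the ancestor $\beta_{j',k'}$ sit above the noise floor at stage $m-1$, and (b) that $j'$ lie on the correct side of $tj$ so that $2^{p(r-s)(jt-j')}$ stays bounded---are \emph{not} simultaneously satisfiable using only the baseline bound $e_{n_{m-1}}(j',k')\lesssim (n/\log n)^{-1/2}$ from part (i). Concretely, taking $j'\approx tj$ as you propose when $r>s$, the telescoped parent inequality gives only
\[
\abs{\beta_{j',k'}}\ \gtrsim\ 2^{j\left[(1-t)\left(s+\tfrac12\right)-v\right]}\delta_n,
\]
and for $j$ near $\jmax$ this is $\ll (n/\log n)^{-1/2}$ whenever $v>(1-t)(s+\tfrac12)+\tfrac{p}{2(p+2)}$; e.g.\ with $p=2$, $s=\tfrac12$, $r=\tfrac32$, $t=0.9$ one has $v=0.95$ against a threshold of $0.35$. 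So the detection hypothesis of \autoref{lem:densities-good} cannot be verified for that ancestor, and taking instead $(j',k')=(j,k)$ is circular. Going all the way to $j_1\le\jmin$ fixes (a) but ruins (b), since $jt-j_1$ is then of order $\log n$ and the residual factor blows up.

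The paper's resolution is not a clever choice of single ancestor but an \emph{induction on the chain length} $d$ from \autoref{lem:parent-wavelets}, with each step in the chain paired with a step back in time: one shows that if $\abs{\beta_{j,k}}\ge(\kappa+1)2^{-jv}\delta_{n_m}$ then the immediate parent satisfies $\abs{\beta_{j_{d-1},k_{d-1}}}\gtrsim 2^{-j_{d-1}v}\delta_{n_{m-1}}$ (this is where $s+\tfrac12-v\ge 0$ is used), so the \emph{inductive hypothesis at time $n_{m-1}$} gives the improved bound $e_{n_{m-1}}(j_{d-1},k_{d-1})\le 2^{-j_{d-1}v}\delta_{n_{m-1}}$, and only then does \autoref{lem:densities-good} apply with $(j',k')=(j_{d-1},k_{d-1})$. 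The parent is automatically at level $j_{d-1}\ge tj-O(1)$, so (b) holds as well. Since $d$ is bounded, the recursion unwinds through boundedly many earlier stages $n_{m-1},\dots,n_{m-d+1}$, and a single choice of $C$ works uniformly. Your parenthetical ``if need be, the stage at which it is detected'' is exactly the missing mechanism; it has to be built into the argument as an induction, not left as a calibration to be checked.
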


\begin{proof}
  We consider the two parts separately.  For part (i), for \(n\) large
  we may use the fact that the effective density is bounded below; we
  have that by \autoref{lem:discrepancy-bounded}, \(q_{l,m} \gtrsim
  1,\) so \(2^{i_n(j, k)} \gtrsim n,\) and \[e_n(j,k) \lesssim (n/\log
  (n))^{-\frac12}.\] The result thus holds for \(C\) large.

  For part (ii), we will argue that for \(f \in \mathcal F,\) large
  coefficients \(\beta_{j,k}\) must have large parents, which we can
  detect over the noise. We will thus place more design points in
  their support, and so estimate the \(\beta_{j,k}\) more accurately.

  Suppose \(\abs{\beta_{j,k}} \ge (\kappa + 1)2^{-jt}\delta_n,\) and
  apply \autoref{lem:parent-wavelets} to \(\beta_{j,k}.\) We then
  obtain wavelet coefficients
  \(\beta_{j_1,k_1},\dots,\beta_{j_d,k_d}\) satisfying the conditions
  of the lemma, which we choose so that \(d\) is minimal; we proceed
  by induction on \(d.\) If \(d = 1,\) then \(j \le \jmin,\) and
  \[2^{-jv}\delta_n \ge 2^{-\jmin v}\delta_n \gtrsim (n/\log
  (n))^{-\frac12}.\]%
  For \(C\) large, the claim then follows from part (i).

  Inductively, suppose the claim holds for \(d-1;\) by minimality of
  \(d,\) we may assume \(j > \jmin.\) If \(\abs{\beta_{j,k}} \ge
  (\kappa + 1)2^{-jv}\delta_n,\) then for \(n\) large we have
  \[\abs{\beta_{j_{d-1},k_{d-1}}} \gtrsim (j_{d-1} / j)
  2^{(j-j_{d-1})\left(s+\frac12-v\right)}2^{-j_{d-1}v}\delta_n \gtrsim
  2^{-j_{d-1}v}\delta_{n_{m-1}},\]%
  since \(\delta_n \gtrsim \delta_{n_{m-1}},\) \(j_{d-1}/j \ge t -
  1/\jmin \gtrsim 1,\) and
  \[s + \tfrac12 - v \ge
  \frac{s+\frac12-\frac{t}2}{1+\frac{p}2} \ge 0.\]%
  For \(n\) and \(C\) large, we may thus apply the inductive
  hypothesis at time \(n_{m-1},\)
  obtaining \[\abs{\beta_{j_{d-1},k_{d-1}}} \ge (\kappa +
  1)e_{n_{m-1}}(j_{d-1},k_{d-1}).\]%

  We then apply \autoref{lem:densities-good} to
  \(\beta_{j_{d-1},k_{d-1}},\) obtaining that, for any \(I_{l,m}
  \subseteq S_{j_{d-1},k_{d-1}},\)
  \begin{align*}
    p_{l,m} &\gtrsim \log(n)^{-2}2^{j_{d-1}p\left(r+\frac12\right)}\abs{\beta_{j_{d-1},k_{d-1}}}^p\\
    &\gtrsim \log(n)^{-2}2^{jv(p+2)}\abs{\beta_{j,k}}^p\\
    &\gtrsim \log(n)^{-2}2^{2jv}\delta_n^p.
  \end{align*}
  For \(n\) large, by \autoref{lem:discrepancy-bounded} we will have
  \[q_{l,m} \gtrsim \log(n)^{-2}2^{2jv}\delta_n^p\] for such \(l.\)
  Since \(S_{j,k} \subseteq S_{j_{d-1},k_{d-1}},\) we also have
  \(2^{i_n(j,k)} \gtrsim n\log(n)^{-2}2^{2jv}\delta_n^p,\) and
  \begin{align*}
    e_n(j,k) &\lesssim n^{-\frac12}\log(n)^{\frac32}2^{-jv}\delta_n^{-\frac12p}\\
    &\lesssim 2^{-jv}\delta_n.
  \end{align*}
  Thus for \(C\) large, the claim is also proved for \(d.\)

  From \autoref{lem:parent-wavelets}, we know the number of induction
  steps is bounded by a fixed constant, so there must be a single
  choice of \(n\) and \(C\) large enough to satisfy all the above
  requirements. As this choice is also uniform over \(f \in \mathcal
  F,\) the result follows.
\end{proof}

Using this result, we conclude that \autoref{alg:sas} attains good
rates of convergence over spatially-inhomogeneous functions.

\begin{lemma}
  \label{lem:algorithm-accurate}
  In the deterministic noise model, let the design points \(x_n\) be
  chosen by \autoref{alg:sas}, \(\mathcal F\) be given by
  \eqref{eq:f-defn}, and \(c_n\) by \eqref{eq:alg-rates}. Then
  \[\sup_{f \in \mathcal F} \norm{\hat f_n - f}_{I,\infty} =
  O\left(c_n\right).\]%
\end{lemma}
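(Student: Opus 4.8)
The plan is to expand $\hat f_n - f$ in the wavelet basis and bound $\norm{\hat f_n - f}_{I,\infty}$ scale by scale, feeding in \autoref{lem:coeffs-accurate}. Since the wavelets are compactly supported with $\norm{\varphi_{j,k}}_\infty,\norm{\psi_{j,k}}_\infty \asymp 2^{j/2}$ and at most $2L-1$ of them are nonzero at any point, for $x \in I$ one has, up to constants, $\abs{\hat f_n(x) - f(x)} \lesssim 2^{j_0/2}\max_k\abs{\hat\alpha_{j_0,k} - \alpha_{j_0,k}} + \sum_{j=j_0}^{\jmax-1}2^{j/2}\max_{k : x \in S_{j,k}}\abs{\hat{\beta}_{j,k}^T - \beta_{j,k}} + \sum_{j\ge\jmax}2^{j/2}\max_{k : x \in S_{j,k}}\abs{\beta_{j,k}}$. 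The scaling term is $\lesssim e_{n}(j_0,k) \lesssim (n/\log n)^{-1/2}$ by \eqref{eq:coeff-bounds} and the argument of \autoref{lem:coeffs-accurate}(i), which applies at every scale $j_0 \le j < \jmax$; the tail is $\lesssim 2^{-\jmax(r-1/p)} = (n/\log n)^{-(r-1/p)}$ since $f \in B^r_{p,\infty}(M) \subseteq C^{r-1/p}(M)$. As $r - 1/p \ge \tfrac12$ and $\min(r',s') \ge s \ge \tfrac12$, while $\alpha(\min(r',s')) < \tfrac12$, both are $o(c_n)$, so only the middle sum matters.

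For each $j_0 \le j < \jmax$, a standard thresholding bound from \eqref{eq:coeff-bounds} (with a constant depending on $\kappa > 1$) gives $\abs{\hat{\beta}_{j,k}^T - \beta_{j,k}} \lesssim \min(\abs{\beta_{j,k}}, e_n(j,k))$. I then have three bounds on the per-scale quantity $\max_{k:x\in S_{j,k}}\min(\abs{\beta_{j,k}},e_n(j,k))$: (a) the size bounds $\abs{\beta_{j,k}} \le M2^{-j(r+\frac12-\frac1p)}$ in general, and $\le M2^{-j(s+\frac12)}$ when $S_{j,k}\subseteq I$; (b) for $\jmin \le j < \jmax$, since each relevant $k$ has $S_{j,k}\cap I \ne\emptyset$, \autoref{lem:coeffs-accurate}(ii) gives $\min(\abs{\beta_{j,k}},e_n(j,k)) \lesssim 2^{-jv}\delta_n$; and (c) $e_n(j,k) \lesssim (n/\log n)^{-1/2}$, which by \autoref{lem:discrepancy-bounded} and the argument of \autoref{lem:coeffs-accurate}(i) holds at every scale $j_0 \le j < \jmax$.

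Summing $2^{j/2}$ times the per-scale bound and splitting the range of $j$ at the crossover scales of (a), (b), (c): when $v \ge \tfrac12$ --- equivalently $pr' \ge 1$, i.e.\ $\min(r',s') = r'$ --- the bound $2^{-jv}\delta_n$ is decreasing in $j$, so the middle sum is governed by its coarsest scale and, together with the coarse-scale contribution over $j < \jmin$ (where (a) and (c) are balanced), yields the rate $n^{-\alpha(r')}$ up to log factors; here $\jmin$ is chosen precisely so that the exponent $2tr + 2(1-t)s + 1$ in its definition produces exactly this. When $v < \tfrac12$ ($pr' < 1$, $\min(r',s') = s'$), balancing (b) against the Hölder size bound in (a) produces the rate $\bigl(n/\log(n)^3\bigr)^{-\alpha(s')}$. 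In all cases one obtains $\sup_{f \in \mathcal F}\norm{\hat f_n - f}_{I,\infty} \lesssim c_n$, uniformly in $f$ since every estimate invoked is uniform over $\mathcal F$. (When $p = \infty$, $u = 0$ and $r' = s' = s$; the bound then follows instead from the Hölder-type argument underlying \autoref{thm:alg-holder-rates}.)

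The main obstacle is this last bookkeeping: verifying that the two regimes reproduce exactly $\alpha(r')$ and $\alpha(s')$; ensuring, uniformly over $x \in I$, that neither the coarse scales $j < \jmin$ nor the $O(1)$ ``edge'' wavelets per scale whose supports meet but are not contained in $I$ --- for which only the global Besov size bound of (a) is available, so the parent chain of \autoref{lem:parent-wavelets} must be invoked --- ever dominate; and carefully tracking the logarithmic factors accumulated through the dyadic splits. This last point is exactly what forces the $n/\log(n)^3$ inside $c_n$, and the extra factor $\log(n)^{1(r'=s')}$ in \eqref{eq:alg-rates} when the two exponents coincide and the geometric sums degenerate to arithmetic ones.
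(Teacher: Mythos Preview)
Your proposal is correct and follows essentially the same route as the paper: decompose $\hat f_n - f$ in wavelets, use the thresholding bound $\lvert\hat\beta_{j,k}^T-\beta_{j,k}\rvert\lesssim\min(\lvert\beta_{j,k}\rvert,e_n(j,k))$, feed in \autoref{lem:coeffs-accurate}, and split the sum over $j$ at the crossover scales according to whether $v\gtrless\tfrac12$ (equivalently $r'\lessgtr s'$). Two small remarks: first, your worry about ``edge'' wavelets is unnecessary, since \autoref{lem:coeffs-accurate}(ii) is stated for $S_{j,k}\cap I\neq\emptyset$, not $S_{j,k}\subseteq I$, so the $2^{-jv}\delta_n$ bound already covers them; second, the paper handles the whole case $r\le s$ (not just $p=\infty$) by the direct H\"older argument \eqref{eq:holder-error-bound}, whereas you only single out $p=\infty$ --- but since $u=0$ forces $r'=s'=s$ either way, this is purely organisational.
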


\begin{proof}
  We may bound the error in \(\hat f_n\) over \(I\) by
  \begin{multline*}
    \norm{\hat f_n - f}_{I,\infty} \lesssim \max_{S_{j_0,K} \cap I \ne
      \emptyset} \abs{\hat \alpha_{j_0,k} - \alpha_{j_0,k}}\\ +
    \sum_{j=j_0}^{\jmax-1} \max_{S_{j,k} \cap I \ne \emptyset}
    2^{\frac12j} \abs{\hat \beta_{j,k}^T - \beta_{j,k}} +
    \sum_{j=\jmax}^\infty \max_{S_{j,k} \cap I \ne \emptyset}
    2^{\frac12j} \abs{\beta_{j,k}}.
  \end{multline*}
  In the deterministic noise model, \(\abs{\hat \alpha_{j_0,k} -
    \alpha_{j_0,k}} \le e_n(j,k),\) and the thresholded coefficients
  \(\hat \beta_{j,k}^T\) fall into one of two cases.
  \begin{enumerate}
  \item If \(\hat \beta_{j,k}^T = 0,\) then
    \[\abs{\beta_{j,k}} \le \abs{\hat
      \beta_{j,k}} + e_n(j,k) \le (\kappa + 1)
    e_n(j,k),\] so \[\abs{\hat \beta_{j,k}^T - \beta_{j,k}} =
    \abs{\beta_{j,k}} \lesssim e_n(j,k).\]
  \item If \(\hat \beta_{j,k}^T \ne 0,\) then
    \[\abs{\beta_{j,k}} \ge \abs{\hat \beta_{j,k}} -
     e_n(j,k) \ge (\kappa - 1) e_n(j,k),\]
    so \[\abs{\hat \beta_{j,k}^T - \beta_{j,k}} = \abs{\hat
      \beta_{j,k} - \beta_{j,k}} \le e_n(j,k) \lesssim
    \abs{\beta_{j,k}}.\]
  \end{enumerate}
  Thus, in either case, we have
  \[\abs{\hat \beta_{j,k}^T - \beta_{j,k}} \lesssim
  \min(e_n(j,k), \abs{\beta_{j,k}}).\]

  For \(n\) large, we may then bound the error in \(\hat f_n\) using
  \autoref{lem:coeffs-accurate}; since the ratios \(n_m/n_{m-1}\) are
  bounded, it suffices to consider times \(n=n_m.\) The contribution
  from the \(\alpha_{j_0,k}\) is of order \((n/\log(n))^{-\frac12},\)
  so may be neglected.  Considering the \(\beta_{j,k}\) terms, if \(r
  \le s,\) from \autoref{lem:coeffs-accurate} and the definition of
  our detectability condition, we obtain the bounds
  \[e_n(j, k) \lesssim (n/\log(n))^{-\frac12},\qquad \abs{\beta_{j,k}}
  \lesssim 2^{-j\left(s+\frac12\right)}.\] Pick \(j_n \in \N\) so that
  \[2^{j_n} \sim (n/\log(n))^{1/(2s+1)},\] and we
  obtain
  \begin{align}
    \notag \norm{\hat f_n - f}_{I,\infty} &\lesssim \sum_{j=j_0}^{j_n-1}
    2^{\frac12j}(n/\log(n))^{-\frac12}
    + \sum_{j=j_n}^\infty 2^{-js}\\
    \label{eq:holder-error-bound}
    &\lesssim (n/\log(n))^{-\alpha(s)}.
  \end{align}

  Assume instead we have \(r > s,\) so \(p < \infty.\) From
  \autoref{lem:coeffs-accurate}, we then obtain the additional bound
  \[\min(e_n(j,k),\abs{\beta_{j,k}}) \lesssim 2^{-jv}\delta_n.\]
  If \(r' < s',\) then \(v > \frac12,\) so
  \begin{align*}
    \norm{\hat f_n - f}_{I,\infty} &\lesssim \sum_{j=j_0}^{\jmin-1}
    2^{\frac12j}(n/\log(n))^{-\frac12} + \sum_{j=\jmin}^{\jmax-1} 2^{-j\left(v-\frac12\right)}\delta_n
    + \sum_{j=\jmax}^\infty 2^{-js}\\
    &\lesssim 2^{\frac12\jmin}(n/\log(n))^{-\frac12} +
    2^{-\jmin\left(v - \frac12\right)}\delta_n + 2^{-\jmax s}\\
    &\lesssim \left(n/\log(n)^{1+2/pr'}\right)^{-\alpha(r')},
  \end{align*}
  which is \(O(c_n),\) as in this case \(r' > \frac1p.\)

  If instead \(r' > s',\) then \(v < \frac12.\) Pick \(j_n \in \N\) so that
  \[2^{j_n} \sim (n/\log(n)^3)^{1/(1-ptu)(2s'+1)},\]
  and we obtain
  \begin{align*}
    \norm{\hat f_n - f}_{I,\infty} &\lesssim \sum_{j=j_0}^{\jmin-1}
    2^{\frac12j}(n/\log(n))^{-\frac12} + \sum_{j=\jmin}^{j_n-1} 2^{j\left(\frac12-v\right)}\delta_n
    + \sum_{j=j_n}^\infty 2^{-js}\\
    &\lesssim 2^{\frac12\jmin}(n/\log(n))^{-\frac12} +
    2^{j_n\left(\frac12 - v\right)}\delta_n + 2^{-j_ns}\\
    &\lesssim \left(n/\log(n)^3\right)^{-\alpha(s')}.
  \end{align*}
  Similarly, if \(r' = s',\) then \(v = \frac12,\) and by the same
  calculation we obtain
  \[\norm{\hat f_n - f}_{I,\infty} \lesssim
  \left(n/\log(n)^3\right)^{-\alpha(s')}\log(n).\] As these bounds are all
  uniform over \(f \in \mathcal F,\) the result follows.
\end{proof}

We next show that the conditions of the deterministic noise model are
satisfied with probability tending to one, uniformly over functions
\(f\) in H\"older classes \(C^{\frac12}(M).\)

\begin{lemma}
  \label{lem:prob-model}
  In the probabilistic noise model, let the design points \(x_n\) be
  chosen by \autoref{alg:sas}. Then at times \(n=n_m,\) there exist
  events \(E_m\) on which condition \eqref{eq:coeff-bounds} holds with
  \(\P(E_m) \to 1;\) this convergence is uniform over \(f\) in classes
  \(C^{\frac12}(M),\) for any \(M > 0.\)
\end{lemma}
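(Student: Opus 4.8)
The plan is to split each estimation error into a deterministic bias and a stochastic part, to bound the bias using the $C^{\frac12}(M)$ condition, and to control the stochastic part by a tail bound built to be insensitive to the adaptive choice of design. Fix $m$ and write $n=n_m$. For $j_0\le j<\jmax$ and $i:=i_n(j,k)$, unwinding \eqref{eq:alpha-beta-hat-defn}--\eqref{eq:i-defn} shows $\hat\beta_{j,k}$ equals $2^{-i/2}$ times a fixed linear combination, with weights $\langle\varphi_{i,\cdot},\psi_{j,k}\rangle$ (the fast-wavelet-transform synthesis coefficients, of $\ell^2$-norm $1$ and $\ell^1$-norm $\lesssim 2^{(i-j)/2}$ over the relevant grid points), of the observations $Y(x)$ at the $O(2^{i-j})$ points $x\in 2^{-i}\Z\cap S_{j,k}$. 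Let $\bar\beta_{j,k}$ be the same expression with each $Y(x)$ replaced by $f(x)$, and $W_{j,k}:=\hat\beta_{j,k}-\bar\beta_{j,k}$ the complementary part, built from the noise realised at the observations taken at the $x$; then $\abs{\hat\beta_{j,k}-\beta_{j,k}}\le\abs{\bar\beta_{j,k}-\beta_{j,k}}+\abs{W_{j,k}}$, and similarly for $\hat\alpha_{j_0,k}$. We take $E_m$ to be the event on which $\abs{W_{j,k}}$, and its scaling-coefficient analogue, is at most $e_n(j,k)-\abs{\bar\beta_{j,k}-\beta_{j,k}}$ for every $j,k$; on $E_m$, \eqref{eq:coeff-bounds} holds.

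For the bias, $\psi_{j,k}\in V_i$ gives $\beta_{j,k}=\langle P_if,\psi_{j,k}\rangle$, so $\bar\beta_{j,k}-\beta_{j,k}$ is the same linear combination applied to the discrepancies $2^{-i/2}f(2^{-i}k')-\alpha_{i,k'}$. For $f\in C^{\frac12}(M)$ each such discrepancy is $\lesssim M2^{-i}$ --- by the oscillation of $f$ over a dyadic interval of width $2^{-i}$ being $\lesssim M2^{-i/2}$, together with the decay of the high-resolution wavelet coefficients of $f$ --- whence, using the $\ell^1$-bound $\lesssim 2^{(i-j)/2}$ on the weights, $\abs{\bar\beta_{j,k}-\beta_{j,k}}\lesssim M2^{-(i+j)/2}\lesssim\bigl(M/\sigma\sqrt{\log n}\bigr)\,e_n(j,k)$, that is, of strictly smaller order than $e_n(j,k)$, uniformly over $f\in C^{\frac12}(M)$, $j$, $k$ and $m$.

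The substance is the stochastic part, where adaptivity must be confronted: the index $i_n(j,k)$, and which observations with which weights enter $\hat\beta_{j,k}$, are data-dependent, so the naive claim $W_{j,k}\sim N(0,\sigma^22^{-i})$ is false. The remedy is that $x_{n'}$ is a function of $Y_1,\dots,Y_{n'-1}$, hence $\mathcal F_{n'-1}$-measurable for $\mathcal F_{n'}:=\sigma(Y_1,\dots,Y_{n'})$, while $\varepsilon_{n'}$ is independent of $\mathcal F_{n'-1}$. Hence, for each \emph{fixed} resolution $i>j$, the quantity $W^{(i)}_{j,k}:=2^{-i/2}\sum_{n'\le n}\langle\varphi_{i,\cdot},\psi_{j,k}\rangle\,1\bigl(x_{n'}\in 2^{-i}\Z\cap S_{j,k}\bigr)\varepsilon_{n'}$ (the weight taken at the grid point $x_{n'}$) is a martingale transform of the innovations $(\varepsilon_{n'})$ by predictable coefficients with squared $\ell^2$-norm $\le1$, and it coincides with $W_{j,k}$ when $i=i_n(j,k)$. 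As the increments are Gaussian, $W^{(i)}_{j,k}$ can be realised as $\sigma2^{-i/2}B_T$ for a standard Brownian motion $B$ and a random time $T\le1$, so $\abs{W^{(i)}_{j,k}}\le\sigma2^{-i/2}\sup_{s\le1}\abs{B_s}$, and the reflection principle gives $\P\bigl(\abs{W^{(i)}_{j,k}}>\sigma2^{-i/2}t\bigr)\le4\,\P(N(0,1)>t)\lesssim t^{-1}e^{-t^2/2}$. With $t=(1-o(1))\sqrt{2\log n}$, the $o(1)$ absorbing the relative bias $\lesssim 2^{-j/2}/\sqrt{\log n}$ at scale $j$, a union bound over the $\lesssim 2^{\jmax}\sim n/\log n$ pairs $(j,k)$ with $j<\jmax$, the $O(\log n)$ resolutions $i$ that can occur, and the $O(1)$ scaling coefficients, gives $\P(E_m^c)\lesssim(\log n)^{-1/2}\to0$: the $t^{-1}\asymp(\log n)^{-1/2}$ prefactor absorbs the extra $\log n$ from the resolution union bound, and a scale-by-scale accounting (coarse scales carry few coefficients) absorbs the bias losses. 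Since nothing here involves $f$ beyond its $C^{\frac12}(M)$ norm, $\P(E_m)\to1$ uniformly over $C^{\frac12}(M)$.

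The main obstacle is exactly this: one must not condition on the design and treat $W_{j,k}$ as Gaussian (which fails), but instead fix the resolution by a union bound and exploit the predictable-martingale-transform structure of the noise --- its Gaussian increments then supplying, through the Brownian embedding and reflection, the $1/\sqrt{\log n}$ headroom that lets the union bound close against the $\sim n/\log n$ coefficients at the precise threshold $e_n(j,k)$ of \eqref{eq:e-defn}.
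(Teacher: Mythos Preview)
Your argument is correct but follows a genuinely different route from the paper's. The paper exploits the fact that \autoref{alg:sas} only ever places design points on the dyadic grid $2^{-\imax}\Z\cap[0,1)$, with $2^{\imax}=n^2$, and never repeats a point. It therefore couples the whole experiment to a pre-generated field $Y_k=f(k2^{-\imax})+\varepsilon_k$, $\varepsilon_k\iid N(0,\sigma^2)$: under this coupling, for each \emph{fixed} triple $(i,j,k)$ the quantity $\hat\beta^i_{j,k}$ is a deterministic linear functional of the $Y_k$, hence exactly $N(\beta_{j,k}+O(2^{-i/2}),\sigma^22^{-i})$, irrespective of the adaptive design. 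A union bound over the $O(\log n)$ admissible resolutions $i\in\{\imin,\dots,\imax\}$ then disposes of the randomness in $i_n(j,k)$. So your worry that ``the naive claim $W_{j,k}\sim N(0,\sigma^22^{-i})$ is false'' is over-cautious in this setting: for fixed $i$ the claim \emph{is} true after coupling, and the martingale/Brownian-embedding machinery, while valid, is not needed. What your approach buys is generality --- it would survive in models where design points are not confined to a fixed finite grid, or where points may be revisited --- and a sharper bias bound: your support-aware estimate $\|c\|_1\lesssim 2^{(i-j)/2}$, in place of the paper's cruder $\|c\|_1\le 2^{i/2}$, makes the bias $o(e_n(j,k))$ rather than merely $O(e_n(j,k))$, which is exactly what lets your scale-by-scale union bound close cleanly.
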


\begin{proof}
  We consider only the estimated coefficients \(\hat \beta_{j,k};\)
  the result for the \(\hat \alpha_{j_0,k}\) follows similarly. We
  will show there exist events \(E_m\) on which all possible estimates
  \(\hat \beta_{j,k}^i,\) for all possible choices of design
  \(\xi_n,\) satisfy \eqref{eq:coeff-bounds} simultaneously.

  Note that at time \(n,\) we estimate coefficients only up to level
  \(\jmax.\) For \(j_0 \le j < \jmax,\) the quantities \(i_n(j,k)\)
  must, by definition, be bounded above by a term \(\imax,\) with
  \(2^\imax \coloneqq n^2.\) Likewise, for \(n\) large, by
  \autoref{lem:discrepancy-bounded} the effective densities \(q_{l,m}
  \gtrsim 1,\) so the quantities \(i_n(j,k)\) are bounded below by
  some \(\imin,\) with \(2^\imin \gtrsim n.\)

  For \(k \in \Z,\ 0 \le k < 2^\imax,\) generate observations
  \[Y_k \coloneqq f(k2^{-\imax}) + \varepsilon_k, \quad \varepsilon_k
  \iid N(0, \sigma^2),\] and define estimates \(\hat \beta_{j,k}^i\)
  in terms of the \(Y_k,\) as in \eqref{eq:beta-hats-defn}. For any
  choice of design points \(x_n,\) our estimated coefficients \(\hat
  \beta_{j,k}\) will be distributed as \(\hat
  \beta^{i_n(j,k)}_{j,k},\) for quantities \(i_n(j, k) \in \{\imin,
  \dots, \imax\},\) also depending on the \(Y_k.\)

  Since \(f \in C^{\frac12}(M),\) for \(i \in \{\imin, \dots,
  \imax\},\) we have
  \[\abs*{\alpha_{i,k} - 2^{-\frac{i}2}f(2^{-i}k)} \le \int \abs{f(x) -
    f(2^{-i}k)}\varphi_{i,k}(x)\ dx \lesssim 2^{-i},\]%
  and so the estimates
  \[\hat \alpha_{i,k}^i \sim N\left(\alpha_{i,k} + O(2^{-i}),
    \sigma^22^{-i}\right)\] as \(n \to \infty.\) Since each estimate
  \(\hat \beta_{j,k}^i = \sum_{l=0}^{2^i-1}c_l \hat \alpha_{i,l}^i,\)
  for a vector \(c_l\) with \(\norm{c_l}_2 = 1,\) by Cauchy-Schwarz
  \(\norm{c_l}_1 \le 2^{\frac{i}2},\) and we obtain
  \[\hat \beta_{j,k}^i \sim N\left(\beta_{j,k} + O\left(2^{-\frac{i}2}\right),
    \sigma^22^{-i}\right).\]

  For given \(i,\,j,\,k,\) the probability that
  \begin{equation}
    \label{eq:bijk-inaccurate}
    \abs{\hat \beta^i_{j,k} - \beta_{j,k}} > \sigma2^{-\frac{i}2}
    \sqrt{2\log(n)}
  \end{equation}
  is thus
  \[2\Phi\left(-\sqrt{2\log(n)} + O(1)\right) \lesssim
  1/n\sqrt{\log(n)},\] using the fact that \(\Phi(-x) \le \phi(x)/x\)
  for \(x > 0.\) By a simple union bound, the probability that any
  \(\hat \beta_{j,k}^i\) satisfies \eqref{eq:bijk-inaccurate} is, up
  to constants, given by
  \[\imax 2^{\jmax} /n\sqrt{\log(n)} \lesssim 1/\sqrt{\log(n)},\]
  uniformly over \(f \in C^{\frac12}(M);\) the result follows.
\end{proof}

Finally, we may combine these lemmas to prove our constructive
results.

\begin{proof}[Proof of \autoref{thm:alg-rates}]
  For \(r \ge \frac12+\frac1p,\) the Besov classes
  \(B^r_{p,\infty}(M)\) are embedded within H\"older classes
  \(C^{\frac12}(M).\) By \autoref{lem:prob-model}, the conditions of
  the deterministic noise model therefore hold at times \(n = n_m,\)
  with probability tending to 1 as \(m \to \infty.\) In the proof of
  \autoref{lem:algorithm-accurate}, we require those conditions only
  at finitely many times \(n_m, \dots, n_{m+d},\) with \(d\) bounded
  by a fixed constant; the conclusion of that lemma thus holds also
  with probability tending to 1 as \(n \to \infty.\)
\end{proof}

\begin{proof}[Proof of \autoref{thm:alg-holder-rates}]
  Given \(f \in C^s(M, I),\) for \(j\) large, and \(k\) such that
  \(S_{j,k} \cap J \ne \emptyset,\) we have \(\abs{\beta_{j,k}} \le
  M2^{-j\left( s+\frac12\right)}.\) We note that, given this
  condition, we may prove a bound
  \[\norm{\hat f_n - f}_{J,\infty} \lesssim
  (n/\log(n))^{-\alpha(s)}\] similarly to
  \eqref{eq:holder-error-bound}; the result then follows as in the
  proof of \autoref{thm:alg-rates}.
\end{proof}

\begin{proof}[Proof of \autoref{thm:holder-rates}]
  The proof of \autoref{thm:alg-holder-rates} remains valid even if we
  set \(n = n_0,\) in which case we are describing the performance of
  a standard uniform-design wavelet-thresholding estimate.
\end{proof}


\bibliographystyle{abbrvnat}
{\footnotesize \bibliography{sasnr}}

\end{document}